\documentclass[12pt]{amsart}
\usepackage{}
\usepackage{amssymb}
\usepackage{amsfonts}
 \usepackage{amsfonts,amssymb}

\usepackage{mathrsfs}
\let\mathcal\mathscr

\usepackage[all,ps,cmtip]{xy}

\def\llra{\hbox to 10mm{\rightarrowfill}}

\def\lllra{\hbox to 15mm{\rightarrowfill}}

\def\PA{{\widehat A}}

\def\PE{{\widehat E}}

\def\phi{{\varphi}}

\def\cI{\mathcal{I}}
\def\cD{\mathcal{D}}

\def\cF{\mathcal{F}}
\def\cL{\mathcal{L}}
\def\cO{\mathcal{O}}

\def\cP{\mathcal{P}}

\def\cE{\mathcal{E}}

\def\cM{\mathcal{M}}

\def\cQ{\mathcal{Q}}
\def\cB{\mathcal{B}}
\def\cZ{\mathcal{Z}}
\def\cV{\mathcal{V}}

\let\tilde\widetilde

\DeclareMathOperator{\rank}{rank}

\DeclareMathOperator{\Pic}{Pic}

\DeclareMathOperator{\Supp}{Supp}

\DeclareMathOperator{\Bs}{Bs}

\DeclareMathOperator{\vol}{vol}

\newtheorem{lemm}{Lemma}[section]
\newtheorem{theo}[lemm]{Theorem}

\newtheorem{cor}[lemm]{Corollary}
\newtheorem{prop}[lemm]{Proposition}

\newtheorem*{conj*}{Conjecture}

\theoremstyle{definition}
\newtheorem{defn}[lemm]{Definition}

\newtheorem{rema}[lemm]{Remark}

\newtheorem{setup}[lemm]{}

\newtheorem{qu}[lemm]{Question}

\theoremstyle{remark}
\newtheorem*{remark*}{Remark}
\newtheorem*{note*}{Note}

\def\moins{\mathop{\hbox{\vrule height 3pt depth -2pt
width 5pt}\,}}
\begin{document}
\title{On quint-canonical birationality of irregular threefolds}
\author{Jheng-Jie Chen, Jungkai Alfred Chen, Meng Chen, Zhi Jiang}

\address{Department of Mathematics, National Central University, Chung-Li, Taoyuan, Taiwan}
\email{jhengjie@math.ncu.edu.tw}

\address{Department of Mathematics, National Taiwan University, and Natioanl Center for Theoretical Sciences, Taipei 106, Taiwan}
\email{jkchen@ntu.edu.tw}

\address{School of Mathematical Sciences, Fudan University, Shanghai 200433, P. R. China}
\email{mchen@fudan.edu.cn.}

\address{Shanghai Center for Mathematical Sciences, Jiangwan Campus, Fudan University, Shanghai 200438, P. R. China }
\email{zhijiang@fudan.edu.cn}

\thanks{The first and second authors were partially supported by National Center for Theoretical Sciences and Ministry of Science and Technology. The third author was
supported by National Natural Science Foundation of China (\#11571076, \#11731004) and Program of Shanghai Subject Chief Scientist (\#16XD1400400).  The forth author was supported by National Natural Science Foundation of China (\#11871155, \#11731004) and the program ``Recruitment of global experts''.}

\maketitle
\begin{abstract}Let $X$ be a complex smooth projective threefold of general type. Assume $q(X)>0$. We show that the $m$-canonical map of $X$ is birational for all $m\geq 5$.
\end{abstract}
\section{Introduction}
In the study of birational geometry, the canonical divisor $K$, together with pluri-canonical divisor $mK$ for any $m \in \mathbb{Z}$, plays the central
role. Let $V$ be a nonsingular projective variety of dimension $n$. For any integer $m$, denote by $\varphi_{m,V}$ the $m$-canonical map of $V$.  The
geometry of $\varphi_{m,V}$ and its variants draw a lot of attention in recent years. For varieties of general type, it is  proved, independently by
Hacon--McKernan \cite{H-M}, Takayama \cite{Taka} and Tsuji \cite{Tsuji},  that there is a constant $\tilde{r}_n$ depending only on the dimension $n$
($n>2$) such that $\varphi_{m,V}$ is birational for all $m\geq \tilde{r}_n$.
 The following question is fundamental in birational geometry:

\begin{qu}\label{QA} For any integer $n\geq 3$, find the optimal constant $r_n$ so that, for all nonsingular projective $n$-folds of general type,
$\varphi_{m}$ is birational onto its image for all $m\geq r_n$.
\end{qu}

It is well-known that $r_1=3$ and $r_2=5$.
The results of Jungkai Chen and  Meng Chen show that $r_3 \le 57$ (cf. \cite{CC3, C18}). It is clear that $r_n$ is non-decreasing with respect to $n$.
However it seems to be very difficult to find explicit bounds in higher dimensions.

On the other hand, if one works on irregular varieties of general type, according to the method and the principle developed in \cite{JC-H-mz}, together
with some more recent results of Jiang and Sun (\cite{JS}), there are many interesting results about effective birationality of pluricanonical maps
depending only on the fiber dimensions of the Albanese maps. We summarize some results here:
\begin{itemize}
\item[1)] if $X$ is of general type and of maxiaml Albanese dimension, then $|3K_X|$ induces a birational map (see \cite{CH3} and \cite{JLT});
\item[2)] if $X$ is of general type and a general fiber of the Albanese morphism is of dimension $1$, then $|4K_X|$ induces a birational map (see
    \cite{JS}).
\end{itemize}

Hence it naturally grows out the following question:

\begin{qu}\label{QA} 
Is it true that $\varphi_{m}$ is birational for all $m\geq r_n$ and for all {\it irregular varieties} of general type of dimension $n+1$.
\end{qu}

Question \ref{QA} is known to be true when $n=1$.
The purpose of this article is to prove the following result which gives a positive answer to  the case of $n=2$.

\begin{theo}\label{5K} Let $X$ be a nonsingular irregular threefold of general type. Then $\varphi_{5,X}$ is birational.
\end{theo}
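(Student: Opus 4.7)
The plan is to split according to the dimension of the Albanese image. Let $a\colon X \to A := \Alb(X)$ denote the Albanese morphism; since $q(X) > 0$ we have $\dim a(X) \in \{1,2,3\}$. Two of the three cases are immediate from the results recalled in the introduction: if $\dim a(X) = 3$ then $X$ has maximal Albanese dimension and item (1) gives that $|3K_X|$ is already birational; if $\dim a(X) = 2$ then the generic Albanese fiber is a curve and item (2) (Jiang--Sun) gives that $|4K_X|$ is birational. In both subcases $\varphi_{5,X}$ is birational a fortiori, and no further work is required.

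The content therefore lies in the remaining case $\dim a(X) = 1$. Replacing $a$ by its Stein factorization, one obtains a fibration $f\colon X \to C$ onto a smooth projective curve $C$ of genus $g(C) \geq 1$, whose Jacobian dominates $A$ and whose general fiber $F$ is a smooth projective surface of general type. To conclude birationality of $\varphi_{5,X}$ I would establish two separation statements. First, for two general points lying on the same fiber $F$, I would combine Bombieri's theorem --- which gives that $\varphi_{5,F}$ is birational on the canonical model of $F$ --- with a Kawamata--Viehweg-type vanishing $H^1(X, 5K_X - F) = 0$ to render the restriction $H^0(X, 5K_X) \to H^0(F, 5K_F)$ surjective onto enough sections. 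Writing $5K_X - F = K_X + \bigl(4K_{X/C} + f^*(4K_C - p)\bigr)$ with $F = f^*p$, the required bigness and nefness are essentially direct when $g(C) \geq 2$, since $\deg(4K_C - p) > 0$ and $K_{X/C}$ is pseudo-effective with big fiber restriction.

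Second, for two points lying on distinct fibers, I would descend to $C$ by pushing forward to $f_*\omega_X^{\otimes 5}$, show that (after twisting, and pushing further to $A$ when appropriate) this sheaf is continuously globally generated or M-regular in the sense of Pareschi--Popa, and combine this with the fibrewise separation to build sections of $|5K_X|$ that vanish at one point but not the other. The invocation of continuous global generation of higher direct images of pluricanonical sheaves is precisely the machinery of \cite{JC-H-mz} and \cite{JS}, applied one dimension lower to the base $C$ rather than to $X$ itself.

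The hard part will be the subcase $g(C) = 1$, in which $A$ is an elliptic curve and $K_C = 0$. There no positivity can be borrowed from the base, and every piece of effectivity must be extracted from $\omega_{X/C}$ itself together with the positivity of its higher direct images on an elliptic curve; this is where the deeper tools --- semipositivity of Hodge bundles, M-regularity on $A$, and careful handling of the Iitaka fibration of $\omega_{X/C}$ --- will have to do real work. Further technical care will be needed to handle non-minimal or singular fibers of $f$, to control the base points of $|5K_F|$ on the canonical model, and above all to ensure that the constant $5$ (and not $6$ or larger) suffices; this quantitative sharpness, rather than any single conceptual step, is what distinguishes Theorem \ref{5K} from the results quoted above.
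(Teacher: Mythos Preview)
Your reduction is essentially the paper's: the maximal Albanese dimension and Albanese fiber dimension one cases are handled by the cited results, and when the Albanese image is a curve of genus $\geq 2$ the argument you sketch (restrict to a fiber via Kawamata--Viehweg and invoke Bombieri) does go through. The paper also invokes the Chen--Hacon result that $\chi(\omega_X)>0$ already forces birationality of $|5K_X|$, so one may assume $\chi(\omega_X)\le 0$ as well; you omit this but it is not essential.

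The genuine gap is in the elliptic base case $g(C)=1$. Your plan is to push $\omega_X^{\otimes 5}$ down to $E$ and hope that M-regularity or continuous global generation of the relevant sheaves does the work. The paper shows that this hope is exactly what fails in general: for a general point $x$, the sheaves $a_{X*}(\omega_X^2\otimes\cI_x)$ and $a_{X*}(\omega_X^3\otimes\cI_x)$ need \emph{not} be M-regular, and the bulk of the argument is devoted to this ``special'' situation. The key idea you are missing is that failure of M-regularity forces a very rigid structure: the base locus of $|mK_X+P|$ as $P$ varies over $\PE$ contains a moving irreducible divisor $\cD_P$, and an Abel--Jacobi argument shows the map $P\mapsto \cD_P-\cD_0$ is the identity of $\PE$. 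This yields a \emph{uniform decomposition} $|mK_X+P|=|L_m|+\cD_P$ for all $P$, and a tensor factorization $a_{X*}\omega_X^m\simeq \cO_E^{\oplus P_m(X)}\otimes \cE^+$ with $\cE^+$ ample of degree $1$. One then takes a general member $S$ of the free pencil $|L_m|$ (for $m=2$ if $p_g(F)>0$, for $m=3$ otherwise), restricts a suitable piece of $|5K_X|$ to $S$, and proves birationality on $S$ by Langer's Reider-type theorem, using volume estimates coming from Severi-type inequalities. This last step requires an extensive case analysis on the invariants $(p_g(F),K_{F_0}^2)$ of the fiber.

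In short: M-regularity is only the opening move. The substance of the proof is the structure theorem for the fixed divisors when M-regularity fails, combined with Reider on the resulting surface pencil; neither of these appears in your plan, and without them the elliptic case cannot be closed.
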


\begin{rema}  This result is optimal since the $4$-canonical map of any 3-fold fibered by surfaces of general type with $K^2=1$ and $p_g=2$ is not birational.
Moreover, the proof of Theorem \ref{5K} is essentially numerical hence it works for all the linear systems $|5K_X\otimes P|$ for all $P\in\Pic^0(X)$.
\end{rema}

Before going into details, we first summarize some more known results about threefolds of general type:
\begin{itemize}
\item[1)] Chen and Hacon proved that if $X$ is an irregular threefold of general type with $\chi(\omega_X)>0$, then $|5K_X|$ induces a biratioal map
    (\cite[Theorem 1.1]{JC-H-mz});
\item[2)] By classical results on pluricanonical maps on surfaces (see Theorem \ref{surface} below), one sees that if $X$ is a threefold of general type
    and the Albanese morphism $a_X$ of $X$ factors through a curve of genus $\geq 2$, then $|5K_X|$ induces a birational map.
\end{itemize}

Combining known results, in order to prove Theorem \ref{5K}, it is sufficient to consider the case when $X$ satisfies the following conditions:

\begin{quote}
\noindent{(\dag)}
{\em
$\chi(\omega_X)\leq 0$ and the image of the Albanese morphism is an elliptic curve $E$. }
\end{quote}

For $X$ satisfying condition $(\dag)$, a general fiber $F$ of $a_X$ is a smooth surface of general type.
We denote by $F_0$ the minimal model of $F$.

We now briefly explain the idea of the proof of Theorem \ref{5K} and the organization of the article.
The method based on GV-sheaves and M-regular sheaves due to Chen-Hacon and Pareschi-Popa is quite powerful to deal with pluricanonical systems of varieties with large irregularity, say varieties of maximal Albanese dimension or varieties of Albanese fiber dimension $1$ (see \cite{CH}, \cite{JC-H-mz}, \cite{JLT}, \cite{JS},\cite{pp1}, \cite{BLNP}). However, the situation here is more complicated and it seems to be difficult to conclude simply by studying the positive properties of the pushforward of pluricanonical sheaves on the Albanese varieties. On the other hand, Reider type theorems work perfectly well for adjoint linear systems on surfaces (see \cite{Laz}).
Here, the idea is to combine both methods. We first show that positivity on the Albanese variety gives strong constraints on the structure of $X$ whose $|5K_X|$ is difficult to study. Then we use twisted bicanonical or tri-canonical system to produce a pencil $|L|$ of $X$ and apply Tankeev's principle, we need to show that $|5K_X|$ induces a birational map on a general member $S\in |L|$ to conclude that $|5K_X|$ is birational. In the last step, we extensively apply a version of Reider theorem due to Langer \cite{Lan} together with volume estimation of big divisors on irregular varieties, which is available due to recent progress of Severi type inequalities (\cite{BPS}, \cite{J}, \cite{jp}).

The structure of the article is as follows.
 In Section 2, we recall some elementary but useful
known facts of surfaces and irregular threefolds.
 We start the study of twisted pluricanonical systems in Section 3. More precisely, we consider the
sheaves of the form ${a_X}_* ( \cO_X(M) \otimes \cI_x)$, where $M$ is an divisor on $X$ such that ${a_X}_* ( \cO_X(M) )$ is ample. In fact, we mainly
consider $M=2K_X$ or $M=3K_X$ in the sequel. Suppose that either ${a_X}_* ( \cO_X(2K) \otimes \cI_x)$ or ${a_X}_* ( \cO_X(3K) \otimes \cI_x)$ is
$M$-regular, then we prove that $|5K_X|$ is birational.

If, on the other hand,  ${a_X}_* ( \cO_X(M) \otimes \cI_x)$ is not $M$-regular, then we prove in Section 4 that
there is a decomposition of $M=L_M+\cD_{M,0}$ into a sum of divisors such that $|M+P|=|L_M|+\cD_{M,P}$ and $\cD_{M,P}-\cD_{M,0} = P$. Moreover, the
pushforward
${a_X}_* ( \cO_X(M) )$ has certain tensor-product structure.
With these type of structures, together with the study of twisted bi-canonical map $|2K_X+P|$,  we are able to prove the main theorem in the case that
$p_g(F) \ne 0$ in Section 5.
Similarly, in Section 6, we are able to prove the theorem by studying twisted tri-canonical map $|3K_X+P|$ when $p_g(F)=0$.

\subsection{Notations}
 Let $X$ be a smooth projective variety. We denote by $a_X: X\rightarrow A_X$ the Albanese variety of $X$ and $\Pic^0(X)$ the Picard variety of $X$. We denote by $\sim$ linear equivalence of divisors and $\equiv$ numerical equivalence of divisors.

\subsection*{Acknowledgements}
The forth author would like to thank  National Center for Theoretical Sciences in Taipei for the  warm hospitality in January 2019.
\section{Preliminary results}
\subsection{Some well-known results on surfaces}
We recall a classical theorem due to Bombieri, Miyaoka and many others (see for instance \cite[Section 7]{BPV}).
\begin{theo}\label{surface} Let $F_0$ be a minimal surface of general type. If $k\geq 5$, then  $\varphi_{k,F_0}$ is a birational morphism onto its
canonical model.
If $k\geq 3$, $\varphi_{k,F_0}$ is birational, except in the following cases:
\begin{itemize}
\item[1)] $(K_{F_0}^2, P_g(F_0))=(2,3)$, then $\varphi_3$ is a morphism of degree $2$ and the canonical model of $F_0$ is a hypersurface of degree $8$
    in $\mathbb{P}(1,1,1,4)$;
\item[2)] $(K_{F_0}^2, P_g(F_0))=(1,2)$, then $\varphi_3$ and $\varphi_4$ are maps of degree $2$ and the canonical model of $F_0$ is a hypersurface of
    degree $10$ in $\mathbb{P}(1,1,2,5)$;
\end{itemize}
\end{theo}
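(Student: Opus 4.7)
\smallskip
\noindent\textbf{Proof plan.} The plan is to combine Bombieri's classical pluricanonical theorems on minimal surfaces of general type with Reider's criterion and a case analysis in small numerical cases.

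For the first assertion ($k\geq 5$), I would invoke Bombieri's theorem that $|5K_{F_0}|$ is base-point-free on any minimal surface of general type and separates every pair of points (possibly infinitely near) outside the union of the $(-2)$-curves of $F_0$. Since those $(-2)$-curves are precisely the curves contracted by $\varphi_{5,F_0}$, and since $5K_{F_0}$ descends to the ample Cartier divisor $5K_{F_\mathrm{can}}$ on the canonical model $F_\mathrm{can}$, the map $\varphi_{5,F_0}$ factors as the contraction $F_0\to F_\mathrm{can}$ followed by a closed embedding; the image is exactly the canonical model. Higher multiples $k\geq 6$ then follow from $|kK_{F_0}|\supseteq|5K_{F_0}|+(k-5)K_{F_0}$, which inherits the same separation properties.

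For the second assertion ($k\geq 3$), I would apply Reider's theorem to the nef and big divisor $L=(k-1)K_{F_0}$. Since $K_{F_0}+L\sim kK_{F_0}$, Reider's criterion guarantees that $|kK_{F_0}|$ separates two sufficiently general points provided $L^2=(k-1)^2 K_{F_0}^2\geq 10$, unless an obstructing effective divisor of very small $L$-degree exists. This numerical condition is automatic for $k\geq 5$, forces $K_{F_0}^2\geq 2$ for $k=4$, and forces $K_{F_0}^2\geq 3$ for $k=3$. In these ranges I would rule out Reider's obstructing divisors by combining nefness and bigness of $K_{F_0}$ with the short list of curves of very small $K_{F_0}$-degree possible on a minimal surface of general type.

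The remaining sporadic cases lie on (or near) the Noether line, namely $(K_{F_0}^2,p_g(F_0))\in\{(1,2),(2,3)\}$. For these I would use the explicit description of the canonical ring: such surfaces are respectively weighted hypersurfaces of degree $10$ in $\mathbb{P}(1,1,2,5)$ and of degree $8$ in $\mathbb{P}(1,1,1,4)$, each realised as a double cover of a rational surface via the projection forgetting the highest-weight variable. From this presentation, $\varphi_3$ (and in the $(1,2)$ case also $\varphi_4$) manifestly factors through the degree-$2$ cover and is therefore of degree exactly $2$. The main obstacle, typical of this classical circle of results, is precisely this small-invariants case analysis: one has to verify, for every admissible pair $(K_{F_0}^2,p_g(F_0))$ with $K_{F_0}^2\leq 2$, either that Reider's obstructions cannot actually produce a non-birational $\varphi_k$, or that one lies exactly in one of the two listed exceptional cases.
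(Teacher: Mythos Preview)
The paper does not prove this theorem at all: it is stated in the preliminaries as a classical result ``due to Bombieri, Miyaoka and many others'' and is accompanied only by the reference to \cite[Section 7]{BPV}. There is therefore no proof in the paper to compare your proposal against.

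Your plan is a reasonable outline of the modern route to this classical statement (Reider's criterion for the generic case, combined with the explicit weighted-hypersurface description on the Noether line), and it is essentially how the result is presented in standard references. One point deserves care, however, and you flag it yourself only at the very end: after Reider leaves you with $K_{F_0}^2\le 2$ for $k=3$ (and $K_{F_0}^2=1$ for $k=4$), the remaining surfaces are \emph{not} only those on the Noether line. You must also dispose of the cases $K_{F_0}^2=1$ with $p_g\in\{0,1\}$ and $K_{F_0}^2=2$ with $p_g\in\{0,1,2\}$, showing that $\varphi_3$ (and $\varphi_4$ when $K_{F_0}^2=1$) is nonetheless birational there. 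These are exactly the cases that required the later contributions of Miyaoka, Benveniste, Catanese, Reider and others beyond Bombieri's original paper; they do not fall out of Reider's numerical criterion alone, and your sentence ``The remaining sporadic cases lie on (or near) the Noether line'' understates the work involved.
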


\begin{prop}\label{bps}
\begin{itemize}
\item[1)] If $p_g(F)\neq 0$, then the linear system $|2K_{F_0}|$ is base point free (\cite[Theorem 7.4]{BPV}) .
\item[2)] If $K_{F_0}^2\geq 2$, then $|3K_{F_0}|$ is base point free (\cite[Theorem 5.1]{BPV}).
\end{itemize}
\end{prop}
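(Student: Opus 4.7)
My plan is to sketch how both statements follow from standard tools for smooth minimal surfaces of general type (Kodaira vanishing and Reider's theorem), while noting that the small-$K^2$ cases of part~(1) require the classical Bombieri-type analysis which I would quote from \cite{BPV} rather than rederive.

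For part~(1), I would argue as follows. Since $p_g(F_0)=p_g(F)>0$ (a birational invariant), choose an effective divisor $D\in|K_{F_0}|$. A point outside $D$ is automatically not a base point of $|2K_{F_0}|=|K_{F_0}+K_{F_0}|$, so it suffices to treat $x\in D$. From the short exact sequence
\[
0\to\cO_{F_0}(K_{F_0})\to\cO_{F_0}(2K_{F_0})\to\cO_D(2K_{F_0})\to 0
\]
and Kodaira vanishing $H^1(F_0,K_{F_0})=0$, the problem reduces to base-point freeness of $\cO_D(2K_{F_0})$ at $x$. By adjunction one has $2K_{F_0}|_D\sim\omega_D\otimes K_{F_0}|_D$, and $K_{F_0}|_D$ is a positive nef line bundle on $D$. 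When $K_{F_0}^2\geq 5$, Reider's theorem applied to $L=K_{F_0}$ would finish the job immediately; the remaining low-$K_{F_0}^2$ cases are handled by a classical case analysis on the geometry of $D$ (number and multiplicities of components, arithmetic genera), exactly as in \cite[Section~7]{BPV}.

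For part~(2), I would apply Reider's theorem with $L=2K_{F_0}$. Since $K_{F_0}$ is nef and $L^2=4K_{F_0}^2\geq 8\geq 5$, if $|K_{F_0}+L|=|3K_{F_0}|$ had a base point $x$, there would exist an effective divisor $E$ through $x$ with either
\[
L\cdot E=1,\ \ E^2=0,\qquad\text{or}\qquad L\cdot E=0,\ \ E^2=-1.
\]
The first is impossible because $L\cdot E=2(K_{F_0}\cdot E)$ is even. In the second, $K_{F_0}\cdot E=0$ forces $E$ to be a non-negative integer combination of $(-2)$-curves in an ADE configuration contracted by the morphism to the canonical model; the intersection form on such a configuration has even Gram matrix, so $E^2$ is necessarily even, contradicting $E^2=-1$.

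The main obstacle is the low-$K_{F_0}^2$ part of~(1): when $K_{F_0}^2\in\{1,2,3,4\}$ with small $p_g$, no uniform Reider-type argument is available and one must invoke the detailed classical study (involving fixed and moving parts of $|K_{F_0}|$, possibly non-reduced canonical divisors, and explicit treatment of genus-two hyperelliptic components of $D$) carried out in \cite[Section~7]{BPV}.
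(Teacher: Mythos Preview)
The paper gives no proof of this proposition; it merely records both statements as facts cited from \cite{BPV}. Your sketch is correct and supplies more than the paper does. For part~(2) your Reider argument is complete and is the standard modern route; one could shorten the last step by observing that $K_{F_0}\cdot E=0$ together with $E^2=-1$ would force $2p_a(E)-2=-1$, already a parity contradiction, but your reasoning via the even intersection form on ADE configurations is equally valid. For part~(1) you correctly isolate the genuine difficulty: Reider only applies once $K_{F_0}^2\geq 5$, and the cases $K_{F_0}^2\leq 4$ require exactly the classical Bombieri-type analysis you defer to \cite[Section~7]{BPV}, which is precisely the reference the paper itself invokes.
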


 In this article, we will frequently apply Reider type results of birationality criterion for adjoint line bundles on surfaces. Let $S$ be a smooth
 projective surface and $D$ a nef  $\mathbb Q$-divisor. Let $x_1$ and $x_2$ be two different closed points of $S$. The following theorem (see \cite{Mas}
 or \cite[Theorem 0.1]{Lan}) gives an effective condition for point separation of  $|K_S+\lceil D\rceil|$.

\begin{theo}\label{adjoint-bundle}
Assume that $D^2>8$. If $|K_S+\lceil D\rceil |$ does not separate $x_1$ and $x_2$, then
there exists a curve $C$ passing through $x_1$ and $x_2$  such that $$(D\cdot C)\leq \frac{4}{1+\sqrt{1-\frac{8}{D^2}}}.$$
\end{theo}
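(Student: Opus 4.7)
The plan is to carry out Reider's classical extension-bundle argument in the $\mathbb{Q}$-divisor setting, following Langer. First, set $Z=\{x_1,x_2\}$, a reduced length-two subscheme of $S$. The failure of $|K_S+\lceil D\rceil|$ to separate $x_1$ from $x_2$ is equivalent, via the ideal-sheaf sequence, to the non-vanishing of $H^1(S,\cI_Z(K_S+\lceil D\rceil))$, which by Serre duality produces a non-zero class in $\Ext^1(\cI_Z(\lceil D\rceil),\cO_S)$. Since $Z$ is reduced of length two it automatically satisfies the Cayley--Bacharach condition, so the corresponding extension
\[
0\to\cO_S\to\cE\to\cI_Z(\lceil D\rceil)\to 0
\]
presents $\cE$ as a rank-two locally free sheaf with Chern classes $c_1(\cE)=\lceil D\rceil$ and $c_2(\cE)=2$.

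Next I would invoke Bogomolov instability in Langer's $\mathbb{Q}$-twisted form, which is the step that permits working with $D$ rather than the rounded-up integral divisor $\lceil D\rceil$. Because $D^2>8=4c_2(\cE)$, the sheaf $\cE$ is destabilized with respect to $D$: one obtains a saturated rank-one subsheaf $M\hookrightarrow\cE$ whose composition with the quotient map $\cE\to\cI_Z(\lceil D\rceil)$ is non-zero (otherwise $M$ would split off the summand $\cO_S\hookrightarrow\cE$, contradicting non-splitness of the extension), and whose slope satisfies $2M\cdot D\ge D^2$. The saturated image of $M$ determines an effective divisor $C$, numerically equivalent to $D-M$, passing through both $x_1$ and $x_2$.

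The final step extracts the numerical bound via the Hodge index theorem. A standard Chern-class computation yields $M\cdot C\le c_2(\cE)=2$, so from $D\equiv M+C$ one obtains
\[
C^2-D\cdot C \;=\; -M\cdot C \;\ge\; -2,
\]
hence $C^2\ge D\cdot C-2$. Feeding this into the Hodge index inequality $(D\cdot C)^2\ge D^2\cdot C^2$ (valid since $D$ is nef with $D^2>0$) yields, with $t:=D\cdot C$,
\[
t^2-D^2 t+2D^2 \;\ge\; 0.
\]
The destabilizing condition $2M\cdot D\ge D^2$ gives $t=D^2-M\cdot D\le D^2/2$, which lies strictly between the two roots of this quadratic (whose sum is $D^2$), forcing $t$ to lie at or below the smaller root. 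After rationalization this smaller root equals $4/(1+\sqrt{1-8/D^2})$, yielding the desired bound.

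The main obstacle is the passage from the classical integer Reider bound to the $\mathbb{Q}$-divisor bound stated in terms of $D^2$: a naive application of Bogomolov to $\cE$ would only see $\lceil D\rceil^2$ on the right-hand side, and extracting the sharper quantity $D^2$ requires Langer's $\mathbb{Q}$-twisted instability theorem for rank-two sheaves. The remaining ingredients (Cayley--Bacharach for two reduced points, ruling out the split composition, and the Hodge index computation) are routine once the destabilizing sub-line bundle is in place.
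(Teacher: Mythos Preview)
The paper does not give a proof of this theorem at all: it is quoted as a black-box input from Langer \cite[Theorem~0.1]{Lan} and Ma\c{s}ek \cite{Mas}, and the paper proceeds directly to apply it. So there is no ``paper's own proof'' to compare against.

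That said, your sketch is precisely the Reider--Langer argument underlying those references, and you have correctly isolated the one genuinely delicate point: a naive application of Bogomolov instability to the extension bundle $\cE$ only sees $c_1(\cE)^2=\lceil D\rceil^2$, whereas the stated bound is in terms of $D^2$. Langer's $\mathbb Q$-twisted instability is exactly what bridges that gap.

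One imprecision worth flagging: the effective curve $C$ arising as the saturated quotient of $\cE$ by $M$ lies in the \emph{integral} class $\lceil D\rceil - M$, not in the $\mathbb Q$-class $D-M$ as you wrote. Consequently your identity $t=D\cdot C = D^2 - M\cdot D$ is not literally correct. In Langer's set-up this is handled by carrying the $\mathbb Q$-twist $\cE\langle D-\lceil D\rceil\rangle$ throughout, so that the formal first Chern class really is $D$ and the Hodge-index step goes through as you wrote; alternatively one keeps track of the effective fractional part $\lceil D\rceil - D$ and checks that its contribution has the right sign. Either way the remaining steps (Cayley--Bacharach for two reduced points, the non-split composition, and the quadratic-in-$t$ Hodge-index inequality) are routine, as you say.
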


In applying Theorem \ref{adjoint-bundle} in our proof, a crucial step is to estimate the lower bound of volumes of nef $\mathbb Q$-divisors on an irregular threefold.
 The following useful results are special forms of the main result in \cite[Remark 3.3 and Corollary 3.4]{J}.

 \begin{prop}\label{J}
 Let $f: X\rightarrow A$ be a fibration from a smooth projective $n$-fold to an abelian variety.  Assume that $D$ is a nef and big divisor on $X$.
 \begin{enumerate}
 \item  If $D|_{X_s}$ is base point free where $X_s$ is a general fiber of $f$ over its image and that $f_*\mathcal O_X(D)$ is a semi-stable ample vector bundle of rank $r$. Then
 $D^n\geq \frac{n}{r}(D|_{X_s})^{n-1}h^0(X, D)$.
 \item If the connected component of a general fiber of $f$ over its image is a curve of genus $g\geq 2$, then $\vol(D)\geq 2(n-1)!h^0(A, f_*D\otimes Q)$, where $Q\in\Pic^0(A)$ general.
 \end{enumerate}
 \end{prop}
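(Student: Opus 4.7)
The plan is to apply the \emph{isogeny trick} of Pareschi--Popa in the refined form used in \cite{BPS, J, jp}. For each integer $d\geq 1$, let $\mu_d\colon A\to A$ be multiplication by $d$, and form the Cartesian square obtained by pulling $f$ back along $\mu_d$, with induced maps $\pi_d\colon X_d\to X$ and $f_d\colon X_d\to A$. Then $\pi_d$ is \'etale of degree $d^{2\dim A}$, so $(\pi_d^*D)^n=d^{2\dim A}D^n$, while flat base change yields $f_{d\,*}\cO_{X_d}(\pi_d^*D)\isom \mu_d^*f_*\cO_X(D)$. Under the hypotheses of $(1)$, both semi-stability and ampleness are preserved under \'etale pullback, so the right-hand side remains ample semi-stable of rank $r$; under $(2)$, the connected components of a general fibre of $f_d$ are still smooth curves of genus $\geq 2$, and $\pi_d^*D$ is still nef and big.

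For general $Q\in\Pic^0(A)$ and $d$ large, ampleness of $f_*\cO_X(D)\ot Q$ implies that $\mu_d^*(f_*\cO_X(D)\ot Q)$ is globally generated, so that
\[h^0(X_d,\pi_d^*D\ot f_d^*Q')\geq d^{2\dim A}\cdot h^0(A,f_*\cO_X(D)\ot Q)\]
for an appropriate $Q'$ lifting $Q$. For $(1)$, I would then combine this with the base point freeness of $\pi_d^*D$ on a general fibre $X_s$ of $f_d$: semi-stability of rank $r$ prevents the sections from concentrating in a proper sub-quotient of the pushforward, and a Chow-type inequality in the vertical direction yields
\[(\pi_d^*D)^n\geq \tfrac{n}{r}(\pi_d^*D|_{X_s})^{n-1}\cdot h^0(\pi_d^*D\ot f_d^*Q');\]
dividing by $d^{2\dim A}$ and letting $d\to\infty$ gives the asymptotic bound in $(1)$. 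For $(2)$, I would restrict the linear series to a general curve component $C$ of a fibre of $f_d$; since $g(C)\geq 2$, Clifford's theorem gives $\deg(\pi_d^*D|_C)\geq 2\bigl(h^0(C,\pi_d^*D|_C)-1\bigr)$, and this factor of $2$ propagates through the volume accounting to produce the coefficient $2(n-1)!$ in the stated inequality.

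The hard step is controlling the base locus of the pulled-back linear series along a general fibre: one must ensure that the many sections produced by the isogeny cover do not jointly vanish on a proper subscheme of the fibre, which would destroy the numerical inequality by making the horizontal contribution spurious. Semi-stability in $(1)$ is used precisely to prevent the pushforward from concentrating in a proper quotient, while the positive genus assumption in $(2)$ is what allows Clifford to replace the trivial estimate $\deg L\geq h^0(L)-1$ by the sharper $\deg L\geq 2(h^0(L)-1)$. Once this is controlled, the remainder is a routine vertical--horizontal volume estimate, and passage to the limit $d\to\infty$ recovers the clean asymptotic statements of $(1)$ and $(2)$.
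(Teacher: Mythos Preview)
The paper does not prove this proposition at all; it is quoted as a special case of \cite[Remark 3.3 and Corollary 3.4]{J}, with no argument given. So there is nothing in the paper to compare your attempt to directly.

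Regarding your sketch itself: the overall architecture --- the multiplication-by-$d$ isogeny trick, base change of $f_*\cO_X(D)$, and an asymptotic passage $d\to\infty$ --- is indeed the engine behind the Severi-type inequalities in \cite{BPS,J,jp}. However, the sentence ``a Chow-type inequality in the vertical direction yields
\[(\pi_d^*D)^n\geq \tfrac{n}{r}(\pi_d^*D|_{X_s})^{n-1}\cdot h^0(\pi_d^*D\ot f_d^*Q')\]''
is the entire proposition restated on $X_d$; you have not said what is easier on $X_d$ than on $X$. The point you are missing is that for $d\gg 0$ the semi-stable ample bundle $\mu_d^*f_*\cO_X(D)$ becomes globally generated (indeed, arbitrarily positive), so one obtains an actual morphism $X_d\to \mathbb P_A(\mu_d^*f_*\cO_X(D))$ (the ``eventual'' or relative evaluation map of \cite{J}); the inequality then comes from genuine intersection theory on this projective bundle, not from a vaguely invoked ``Chow-type'' estimate. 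Similarly, in (2) the factor $2(n-1)!$ does not fall out of Clifford's theorem alone: one needs the degree estimate on the curve fibre to interact with the horizontal volume via the same projective-bundle picture, and the $(n-1)!$ is the combinatorial factor coming from integrating over the $(n-1)$-dimensional base direction. Your final paragraph correctly identifies base-locus control as the crux, but then does not carry it out; as written, the proposal is a strategy outline rather than a proof.
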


 \begin{lemm}\label{non-hyperelliptic}
 Let $F_0$ be a minimal surface of general type such that the linear system $|mK_{F_0}|$ contains an irreducible and smooth curve  for some $m\geq 2$ and $|(m+1)K_{F_0}|$ induces a birational map. Then a general curve $C\in |mK_{F_0}|$ is not hyperelliptic.
\end{lemm}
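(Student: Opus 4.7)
The plan is to use adjunction to relate $|(m+1)K_{F_0}|$ restricted to $C$ with the canonical series on $C$, and then transfer the birationality hypothesis on $F_0$ to a birationality statement on the restricted map. Let $C$ be a general member of $|mK_{F_0}|$; by hypothesis it is smooth and irreducible. Adjunction gives
$$K_C \sim (K_{F_0}+C)|_C \sim (m+1)K_{F_0}|_C,$$
so the image
$$W := \mathrm{Im}\!\bigl(H^0(F_0,(m+1)K_{F_0}) \to H^0(C,(m+1)K_{F_0}|_C)\bigr) \subseteq H^0(C,K_C)$$
defines a linear subseries of $|K_C|$, and its associated map $\phi_W\colon C \dashrightarrow \P^{\dim W-1}$ factors the canonical map as $\phi_W = \pi\circ \phi_{K_C}$ for a suitable linear projection $\pi$.

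The key step is to verify that $\phi_W$ itself is birational onto its image. Since $\varphi_{(m+1)K_{F_0}}$ is birational on $F_0$, it restricts to an isomorphism onto its image on some dense open $U\subseteq F_0$ whose complement has codimension at least one. Because $|mK_{F_0}|$ contains an irreducible smooth member, a general $C\in|mK_{F_0}|$ shares these properties and cannot coincide with any codimension-one component of $F_0\setminus U$; hence $C\cap U$ is a dense open of $C$ on which $\varphi_{(m+1)K_{F_0}}|_C$ is injective. This forces the morphism $C \to \overline{\phi_W(C)}$ to have degree one, i.e., $\phi_W$ is birational onto its image (which is necessarily a curve, not a point, for dimensional reasons).

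Once $\phi_W$ is birational, the factorization $\phi_W=\pi\circ\phi_{K_C}$ forces $\phi_{K_C}$ to be generically injective as well: if $\phi_{K_C}$ identified a Zariski-dense set of pairs of distinct points on $C$, then so would $\pi\circ\phi_{K_C}=\phi_W$, contradicting its birationality. Hence $|K_C|$ defines a birational map and $C$ is not hyperelliptic. The only real technical subtlety is ensuring $C\cap U \neq \emptyset$ for general $C$, which is precisely where the hypothesis that $|mK_{F_0}|$ contains an irreducible smooth curve is used (by openness of smoothness and irreducibility in the linear system).
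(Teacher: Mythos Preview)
Your proof is correct and follows the same approach as the paper: use adjunction to identify $(m+1)K_{F_0}|_C$ with $K_C$, observe that the restriction of $|(m+1)K_{F_0}|$ to $C$ sits inside $|K_C|$, and transfer the birationality of $\varphi_{(m+1)K_{F_0}}$ to the restricted map, forcing $\phi_{K_C}$ to be birational. The paper's version is terser (it simply asserts that $|(m+1)K_{F_0}|_{|C}$ is birational and a subsystem of $|K_C|$), while you spell out the factorization $\phi_W=\pi\circ\phi_{K_C}$ and justify why a general $C$ meets the open set where $\varphi_{(m+1)K_{F_0}}$ is an isomorphism.
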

\begin{proof}
Let $C\in  |mK_{F_0}|$ be a general member, then $|(m+1)K_{F_0}|_{|C}$ induces a birational map of $C$. On the other hand, considering
$$0\rightarrow K_{F_0}\rightarrow K_{F_0}+C\rightarrow K_C\rightarrow 0,$$ we conclude that $|K_{F_0}+C|_{C}=|(m+1)K_{F_0}|_{C}$ is a sublinear system of
$|K_C|$. Hence $C$ is not hyperelliptic.
\end{proof}
\begin{rema} By \cite[Section 7]{BPV}, we know that if $p_g(F_0)>0$, then $|2K_{F_0}|$ is base point free and $|3K_{F_0}|$ induces a birational map  of $F_0$ except $K_{F_0}^2=1$ and $p_g(F_0)=2$ or $K_{F_0}^2=2$ and $p_g(F_0)=3$. If $p_g(F_0)=0$, then $|4K_{F_0}|$ is always birational and $|3K_{F_0}|$ is always base point free when $K_{F_0}^2\geq 2$. If $p_g(F_0)=0$ and $K_{F_0}^2=1$, then $|3K_{F_0}|$ has no fixed component and a general member is irreducible and smooth (see \cite{Cat}).
\end{rema}
 \subsection{Plurigenera of Irregular Threefolds}
Some cases of the following lemma (see for instance \cite[4.4]{CC1}) are known.

\begin{lemm}\label{plurigenera}
Let $X$ be a smooth irregular threefold of general type. Then we have $P_{k+1}(X)>P_k(X)\geq 1$ for all $k\geq 2$.
\end{lemm}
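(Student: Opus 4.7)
My approach combines Reid's Riemann--Roch on a minimal model $X'$ of $X$ with a case analysis on the dimension $d:=\dim a_X(X)\in\{1,2,3\}$. Since Kawamata--Viehweg vanishing on $X'$ gives $P_k(X)=\chi(X',\omega_{X'}^k)$ for every $k\ge 2$, Reid's RR for threefolds with terminal singularities produces an explicit cubic polynomial in $k$ whose leading term is $\tfrac{K_{X'}^3}{6}>0$. The lower bound $P_k(X)\ge 1$ for $k\ge 2$ is then standard: one checks at $k=2$ that the singularity basket contribution is non-negative and dominated by the cubic term, as in \cite{CC1}.

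For the strict inequality, subtracting Reid's RR at consecutive indices yields
\[
P_{k+1}(X)-P_k(X)\;=\;\tfrac{k^2}{2}\,K_{X'}^3\;-\;2\chi(\cO_{X'})\;+\;\Delta_k,
\]
with $\Delta_k\ge 0$ for $k\ge 2$. If $d=3$ (maximal Albanese dimension), generic vanishing of Green--Lazarsfeld gives $\chi(\omega_X)\ge 0$, hence $\chi(\cO_{X'})\le 0$ by Serre duality, and every term on the right is non-negative; since $K_{X'}^3>0$ the inequality is strict. If $d=2$, the Stein factorization $f\colon X\to Y'$ of $a_X$ is a fibration over a normal surface whose general fibre is a smooth curve of genus $g\ge 2$ (by Viehweg's $C_{n,m}$); Viehweg's weak positivity of $f_*\omega_{X/Y'}^k$ together with the explicit strict growth $P_{k+1}(C)-P_k(C)=2(g-1)\ge 2$ on the fibre transfers to strict growth on $X$ via restriction.

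The main obstacle is the case $d=1$ with Albanese image an elliptic curve $E$ and $\chi(\omega_X)\le 0$, i.e.\ precisely condition $(\dag)$ of the paper. Here Reid's RR alone does not obviously give the correct sign, so I would instead work with the pushforward $\cF_k:=f_*\omega_X^k=f_*\omega_{X/E}^k$ (using $\omega_E\simeq\cO_E$): by Fujita--Kawamata semipositivity, $\cF_k$ is a nef vector bundle on $E$ of rank $P_k(F_0)$, where $F_0$ is the minimal model of a general fibre (a surface of general type). By Bombieri, $P_{k+1}(F_0)-P_k(F_0)=kK_{F_0}^2\ge 2$ for $k\ge 2$, so the rank of $\cF_k$ strictly increases with $k$. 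Combining this with Koll\'ar vanishing $R^if_*\omega_X^k=0$ for $i>0$ (so $h^0(X,\omega_X^k)=h^0(E,\cF_k)$) and nefness of $\cF_k$ on $E$ (so $h^0(E,\cF_k)$ is controlled from below by its degree plus the number of trivial summands), one deduces $h^0(E,\cF_{k+1})>h^0(E,\cF_k)$, i.e.\ $P_{k+1}(X)>P_k(X)$. The trickiest point will be ruling out pathological behaviour of $\deg\cF_k$ under the rank increase, which should follow from the bigness of $K_X$ and an Iitaka-type positivity estimate for fibrations over elliptic curves.
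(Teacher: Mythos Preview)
Your $d=3$ case via generic vanishing and Reid's Riemann--Roch is fine, and it is a valid alternative to what the paper does. But the remaining two cases have real gaps.

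\textbf{The case $d=2$.} The sentence ``weak positivity of $f_*\omega_{X/Y'}^k$ together with strict growth on the fibre transfers to strict growth on $X$ via restriction'' is not a proof: weak positivity of a pushforward does not by itself compare $h^0$ at consecutive $k$, and restriction to a fibre only gives an upper bound $P_k(X)\le P_k(C)\cdot(\text{something})$, not the inequality you want. The paper treats $d\ge 2$ uniformly by a completely different mechanism: by \cite{CH} and \cite{JS} the cohomological support locus $V^0(\omega_X,a_X)$ has an irreducible component $T$ of dimension $\ge 1$, and then the multiplication maps
\[
H^0(X,\omega_X\otimes P)\otimes H^0(X,\omega_X^{k}\otimes P^{-1})\longrightarrow H^0(X,\omega_X^{k+1}),\qquad P\in T,
\]
together with the fact that $a_{X*}\omega_X^k$ is IT$^0$ for $k\ge 2$ (so $h^0(X,\omega_X^k\otimes P^{-1})=P_k(X)$ for every $P$), give $P_{k+1}(X)>P_k(X)$ directly. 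This replaces your weak-positivity sketch entirely.

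\textbf{The case $d=1$.} Your plan reduces to showing $\deg\cF_{k+1}>\deg\cF_k$ on $E$, and you correctly flag that this is the crux; but rank increase plus nefness does not force degree increase, and no ``Iitaka-type positivity estimate'' will close this without further input. The missing idea is to split on $p_g(F)$, because the two subcases behave oppositely. If $p_g(F)=0$ then also $q(F)=0$, so $f_*\omega_X=R^1f_*\omega_X=0$ and the Leray computation gives $\chi(\omega_X)=\deg\omega_C\ge 0$; thus $\chi(\cO_{X'})\le 0$ and your own Reid--RR formula $P_{k+1}-P_k=\tfrac{k^2}{2}K_{X'}^3-2\chi(\cO_{X'})+\Delta_k$ already gives the strict inequality. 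If $p_g(F)\neq 0$, then $a_{X*}\omega_X$ is a nontrivial GV-sheaf, so there exists $Q\in\widehat{A_X}$ with $H^0(X,K_X+Q)\neq 0$; multiplication by a nonzero section yields an injection $f_*\omega_X^k\hookrightarrow f_*(\omega_X^{k+1}\otimes Q)$ whose cokernel has rank $P_{k+1}(F)-P_k(F)>0$ and is IT$^0$, whence $P_{k+1}(X)=h^0(f_*\omega_X^k)+h^0(\cQ_k)>P_k(X)$. This is exactly the comparison of $h^0(\cF_k)$ and $h^0(\cF_{k+1})$ you were after, but it needs the section of $K_X+Q$ to produce the map; nefness of $\cF_k$ alone does not.
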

\begin{proof}
If $X$ is of Albanese fiber dimension $\leq 1$, then by \cite{CH} and \cite{JS}, we know that $V^0(\omega_X, a_X)=\{P\in \Pic^0(X)| h^0(\omega_X\otimes
P)\neq 0\}$ has an irreducible component $T$ of dimension $\geq 1$.
Considering the map $$\bigcup_{P\in T}H^0(X, \omega_X\otimes P)\otimes H^0(X, \omega_X^{k}\otimes P^{-1})\rightarrow H^0(X, \omega_X^{k+1}),$$ we conclude that $P_{k+1}(X)>P_k(X)\geq 1$ for any $k\geq 2$.

We now consider the case when the image of $a_X$ is a curve, where we take the Stein factorization $$a_X: X\xrightarrow{f} C\xrightarrow{g} A_X, $$
and let $F$ be a general fiber of $f$.

First recall that by \cite[Lemma 4.1]{CH1},
$g_*f_*\omega_X^k$ and  $g_*f_*(\omega_X^{k}\otimes Q)$ are non-trivial IT$^0$ sheaves on $A_X$ for all $k \ge 2$ and for all $Q \in \PA_X$.
\footnote{Chen and Hacon proved this when $C$ is an elliptic curve but their argument works in general.}

 If $p_g(F)\neq 0$, then $a_{X*}\omega_X$ is a non-trivial GV-sheaf. Hence there exists $Q\in \PA_X$ such that $H^0(X,
K_X+Q)\neq 0$. A non-zero section
$s\in H^0(X, K_X+Q)$ induces a short exact sequence:
$$0\rightarrow f_*\omega_X^k\xrightarrow{\cdot s} f_*(\omega_X^{k+1}\otimes Q)\rightarrow \cQ_k\rightarrow 0.$$
We notice that $P_{k+1}(F)>P_{k}(F)$ for all $k\geq 2$, hence $\cQ_k$ is a non-trivial IT$^0$ coherent sheaf on $C$.  Hence
\begin{eqnarray*}P_{k+1}(X)&=&h^0(X, (k+1)K_X+Q)=h^0(A_X, g_*f_*(\omega_X^{k+1}\otimes Q))\\
&=& h^0(A_X, g_*f_*\omega_X^{k})+h^0(A_X, \cQ_{k})\\
&>& h^0(A_X, g_*f_*\omega_X^{k})=P_k(X).
\end{eqnarray*}

If $p_g(F)=0$, then $q(F)=0$ and hence $f_*\omega_X=R^1f_{*}\omega_X=0$. We have
\begin{eqnarray*}\chi(X, \omega_X)&=&\deg(f_*\omega_X)-\deg(R^1f_*\omega_X)+
\deg(R^2f_*\omega_X)\\&=&\deg\omega_C\geq 0.
\end{eqnarray*}
We denote by $Y$ the minimal model of $X$. Then $\chi(Y, \cO_Y)=\chi(X, \cO_X)\leq 0$. Since  $K_Y^3>0$, by Reid's Riemann-Roch formula, $P_2(X)=\frac{1}{2}K_Y^3-3 \chi(Y, \cO_Y) +\mathcal{R}_2 >0$ and for all $k \ge 2$,
$$P_{k+1}(X)-P_k(X)=\frac{k^2}{2}\cdot K_Y^3-2\chi(Y, \cO_Y)+\mathcal{R}_{k+1}, $$
where $\mathcal{R}_2, \mathcal{R}_{k+1}$ denotes the contribution of singularities which are always non-negative.
It follows immediately that $P_{k+1}(X)> P_k(X) \ge 1 $ for all $k \ge 2$.
\end{proof}

\section{Twisted Pluricanonical systems}
\begin{prop}\label{bir1}Let $X$ be a smooth projective threefold of general type satisfying Condition $(\dag)$. If there exists an open dense subset $U$
of $X$
such that, for each point $x\in U$, $E_{3,x}=a_*(\omega_X^3\otimes\cI_x)$ is M-regular,  then $\phi_{5,X}$ is birational.
\end{prop}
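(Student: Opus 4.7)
The plan is to factor the question through the Albanese fibration $a_X : X \to E$ and, for any pair of distinct general points $x_1, x_2 \in U$, to separate their images under $\phi_{5,X}$ by building a section of $\omega_X^5$ vanishing at $x_1$ but not at $x_2$ as a product $s\cdot t$, where $s \in H^0(X, \omega_X^3 \otimes a_X^* P \otimes \cI_{x_1})$ and $t \in H^0(X, \omega_X^2 \otimes a_X^* P^{-1})$ for a suitably chosen $P \in \Pic^0(E)$. The first factor will come from the M-regularity hypothesis on $E_{3,x_1}$, the second from the IT$^0$ property of $a_{X*}\omega_X^2$ already used in the proof of Lemma \ref{plurigenera}.

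More precisely, I would set $e_i := a_X(x_i)$. Since $E_{3,x_1}$ is M-regular on the elliptic curve $E$, Pareschi--Popa theory implies it is continuously globally generated, so for $P$ in a dense open $V_1 \subset \Pic^0(E)$ the evaluation of global sections surjects onto the fibre of $E_{3,x_1} \otimes P$ at $e_2$. When $e_1 \ne e_2$, cohomology and base change identifies this fibre with $H^0(F_{e_2}, \omega_{F_{e_2}}^3)$; when $e_1 = e_2 =: e$, it becomes $H^0(F_e, \omega_{F_e}^3 \otimes \cI_{x_1})$. In either situation Theorem \ref{surface} guarantees that $|3K_{F_{e_2,0}}|$ is birational onto its image (outside the two exceptional surface types), so one can select a fibre element non-vanishing at $x_2$; via the projection formula $H^0(E, E_{3,x_1} \otimes P) = H^0(X, \omega_X^3 \otimes a_X^* P \otimes \cI_{x_1})$ this lifts to a section $s$ with $s(x_2) \ne 0$.

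For the second factor I would use that $a_{X*}\omega_X^2$ is IT$^0$ on $E$ (see the footnote in the proof of Lemma \ref{plurigenera}), hence M-regular and continuously globally generated; so for $P'$ in a dense open $V_2 \subset \Pic^0(E)$ one similarly lifts an element of $H^0(F_{e_2}, \omega_{F_{e_2}}^2)$ non-vanishing at $x_2$ (available by Proposition \ref{bps} when $p_g(F) \ne 0$, and by a generic choice of $x_2$ otherwise) to a section $t \in H^0(X, \omega_X^2 \otimes a_X^* P')$. Picking $P \in V_1 \cap V_2^{-1}$, which is non-empty by irreducibility of $\Pic^0(E)$, the product $st$ lies in $H^0(X, \omega_X^5 \otimes \cI_{x_1})$ and satisfies $(st)(x_2) \ne 0$. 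Exchanging $x_1 \leftrightarrow x_2$ gives point-separation for general pairs, and tangent separation at a single general point $x$ follows by the same scheme with $\cI_x$ replaced by $\cI_x^2$, using that $\phi_{5,F_0}$ is a birational \emph{morphism} for fiberwise tangent separation (Theorem \ref{surface}).

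The main obstacle I anticipate is the base-change identification of the fibre of $E_{3,x_1}$, especially in the diagonal case $e_1 = e_2$ where $\cI_{x_1}$ restricts non-trivially to $F_e$; the IT$^0$ property ensures the necessary cohomology vanishings, but this step deserves careful verification. A secondary issue is to absorb the exceptional surface types in Theorem \ref{surface} and Proposition \ref{bps} (where $|3K_{F_0}|$ or $|2K_{F_0}|$ may fail to be birational or base-point-free) into the genericity of the chosen fibres, which can be achieved by further shrinking $U$.
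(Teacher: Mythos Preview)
Your multiplication-map strategy (continuous global generation of $E_{3,x_1}$ combined with IT$^0$ of $a_{X*}\omega_X^2$) is essentially what the paper invokes via \cite[Corollary 2.4]{JC-H-mz}, and it works whenever $|3K_F|$ separates the two points in question on the fibre $F$.

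The gap is in your last sentence. The exceptional surface types in Theorem \ref{surface} cannot be ``absorbed into the genericity of the chosen fibres'' by shrinking $U$: the diffeomorphism type of a general fibre of $a_X$ is an invariant of $X$, so if $F_0$ happens to be a $(1,2)$- or $(2,3)$-surface then \emph{every} smooth fibre is of that type. In those cases $\varphi_{3,F}$ has degree $2$, and for a general pair $x_1,x_2\in F$ exchanged by the corresponding involution, \emph{every} section of $H^0(F,\omega_F^3\otimes\cI_{x_1})$ vanishes at $x_2$ as well. Your product $s\cdot t$ then necessarily vanishes at both points, and no choice of $P$ rescues this. (The same obstruction kills the tangent-separation argument along the ramification locus.)

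The paper handles this remaining case by a completely different argument. Assuming $\varphi_{5,X}$ is not birational, the fibrewise picture forces it to be generically of degree $2$; one then passes to the quotient $f:X\to Y$ by the birational involution $\tau$, decomposes $f_*\omega_X^5=(\omega_Y^5\otimes\cL^5)\oplus(\omega_Y^5\otimes\cL^4)$ into $\tau$-invariant and $\tau$-anti-invariant parts, and observes that both summands push forward to ample sheaves on $E$ (since $a_{X*}\omega_X^5$ is ample and $|5K_F|$ is birational). In particular the anti-invariant summand has a nonzero global section, which separates the two sheets of $f$ and contradicts the assumption that $\varphi_{5,X}$ factors through $f$. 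You need an argument of this kind to close the gap.
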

\begin{proof}
For any $x\in U$, $E_{3, x}$ is M-regular and hence is continuously globally generated
(see \cite[Proposition 2.13]{pp1}). Then, by \cite[Corollary 2.4]{JC-H-mz}, there exists an open dense subset $V$ of $U$ such that the linear system
$|5K_X|$ separates any two points $x$, $y$ of $V$ in different fibers of $a_X$. Moreover, if $x, y\in V$ are two different points in a general fiber $F$
of $a_X$ and if $|3K_F|$ separates $x$ and $y$, then $|5K_X|$ also separates $x$ and $y$.

By Theorem \ref{surface}, $\varphi_{3,F}$ is birational unless $F$ is either a $(1,2)$-surface or a $(2,3)$-surface. Moreover, for these two classes of
surfaces, $|3K_F|$ induces a map of degree $2$.

Assume, to the contrary, that $\varphi_{5,X}$ is non-birational. We want to deduce a contradiction.

Then the linear system $|5K_X|$ induces a map $\phi_5: X\rightarrow Y$ of degree $2$. We are going to argue by contradiction to show that it is absurd.

Note that $\varphi_{5,X}:X\rightarrow \hat{Y}$ must be generically finite of degree $2$. Hence there is
a birational involution $\tau$ on $X$ switching two fibers of $\phi_5$.
Modulo birational modifications, we may assume that $\tau$ is a biregular involution and the quotient ${Y}=X/\tau$ is also a smooth projective variety.
The involution $\tau$ preserve general fibers of the Albanese morphism $a_X$ and hence we have a commutative diagram:
\begin{eqnarray*}
\xymatrix{
X\ar[r]^{f}\ar[dr]_{a_X}& {Y}\ar[d]^{a_Y}\\
& E,}
\end{eqnarray*}
where $f$ is the quotient morphism of $\tau$.  By assumption, $\varphi_{5,X}$ factors through $f$.
There exists a line bundle $\cL$ on $Y$ such that $\omega_X=f^*(\omega_Y\otimes\cL)$ and $f_*\cO_X=\cO_Y\oplus \cL^{-1}$. Thus we have
$$f_*\omega_X^5=\big(\omega_Y^{\otimes 5}\otimes\cL^{\otimes 5}\big) \oplus\big( \omega_Y^{\otimes 5}\otimes\cL^{\otimes 4}\big),$$ where
$\omega_Y^{\otimes 5}\otimes\cL^{\otimes 5}$ (resp. $\omega_Y^{\otimes 5}\otimes\cL^{\otimes 4}$) correspond to $\tau$-invariant (resp.
$\tau$-anti-invariant) sections of $\omega_X^5$.

Since $|5K_F|$ induces a birational map of $F$ and ${a_X}_*\omega_X^{\otimes 5}$ is ample,  both  ${a_Y}_*\big(\omega_Y^{\otimes 5}\otimes\cL^{\otimes
5}\big)$ and
${a_Y}_*\big( \omega_Y^{\otimes 5}\otimes\cL^{\otimes 4}\big)$ are ample. Since both the ample sheaves on $E$ have global sections, they are lift to
global ones of $\omega_X^{\otimes 5}$. Hence
direct summand of $a_{X*}\omega_X^{\oplus 5}$, it is ample. Hence
$H^0(X, 5K_X)$ contains a section separating the two points on the general fiber of $f$,  which means however that $\varphi_{5,X}$ does not factor
birationally through $f$, a contradiction.
\end{proof}

Hence, in order to prove the main theorem,  we will mainly work under the condition that $E_{3, x}$ is not M-regular in the rest of the article. But some
variant of Proposition \ref{bir1} is also needed to deal with some special cases.

\begin{prop}\label{2K-not-m-regular} Let $X$ be a smooth projective threefold of general type satisfying Condition $(\dag)$. Let $F$ be a general fiber of
$a_X$. Assume that, for  general point $x\in X$, $a_{X*}(\omega_X^2\otimes\cI_x)$ is M-regular.  Then $|5K_X|$ induces a birational map.
\end{prop}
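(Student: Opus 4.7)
The plan is to mirror the proof of Proposition~\ref{bir1}, substituting $E_{2,x}$ for $E_{3,x}$ and exchanging the roles of $2K_X$ and $3K_X$ in the factorization $5K_X = 2K_X+3K_X$. By hypothesis, $E_{2,x} = a_{X*}(\omega_X^{\otimes 2}\otimes\cI_x)$ is M-regular for general $x$, hence continuously globally generated by \cite[Proposition~2.13]{pp1}. The analog of \cite[Corollary~2.4]{JC-H-mz} then produces an open dense $V\subset X$ such that $|5K_X|$ separates any two points $x, y\in V$ lying in distinct fibers of $a_X$: CGG of $E_{2,x}$ furnishes, for various $P\in\Pic^0(E)$, sections of $|2K_X+a_X^*P|$ vanishing at $x$ but nonzero along the fiber over $a_X(y)$, which pair against sections of $|3K_X-a_X^*P|$ nonzero at $y$. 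The latter exist by the IT$^0$ property and ampleness of $a_{X*}\omega_X^{\otimes 3}$ on the elliptic curve $E$, cf.\ \cite[Lemma~4.1]{CH1}.

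Assume, for contradiction, that $\varphi_{5, X}$ is not birational. Then $\varphi_{5,X}$ is generically of degree $2$, and by the separation just established its generic identifications must occur within common fibers of $a_X$. One then repeats the involution argument of Proposition~\ref{bir1} essentially verbatim: after birational modification there is a biregular involution $\tau$ on $X$ commuting with $a_X$, with quotient $f\colon X\to Y = X/\tau$ fitting into a commutative diagram over $E$. As in the proof of Proposition~\ref{bir1} one writes
\[
f_*\omega_X^{\otimes 5} \;=\; \bigl(\omega_Y^{\otimes 5}\otimes\cL^{\otimes 5}\bigr)\,\oplus\,\bigl(\omega_Y^{\otimes 5}\otimes\cL^{\otimes 4}\bigr),
\]
with the second summand encoding the $\tau$-anti-invariant sections of $\omega_X^{\otimes 5}$. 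Since $|5K_F|$ is birational for \emph{any} minimal surface of general type by Theorem~\ref{surface}, and $a_{X*}\omega_X^{\otimes 5}$ is ample on $E$, both $a_{Y*}(\omega_Y^{\otimes 5}\otimes\cL^{\otimes i})$ for $i = 4, 5$ are ample on $E$, and in particular possess nonzero global sections. These lift to $\tau$-anti-invariant sections of $|5K_X|$ separating the two points of a general $\tau$-orbit, contradicting the assumed factorization of $\varphi_{5,X}$ through~$f$.

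The main step is the first paragraph: adapting the different-fiber separation criterion of Chen--Hacon from $E_{3,x}$ to $E_{2,x}$. This is largely formal once one has the continuous global generation of $E_{2,x}$ together with the IT$^0$-ampleness of $a_{X*}\omega_X^{\otimes 3}$ on the elliptic curve $E$; the subsequent involution contradiction of Proposition~\ref{bir1} then applies without essential change, because its positivity input -- birationality of $|5K_F|$ and ampleness of $a_{X*}\omega_X^{\otimes 5}$ on $E$ -- holds unconditionally, in contrast to the $|3K_F|$-separation invoked in Proposition~\ref{bir1}.
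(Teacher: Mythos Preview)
Your argument has a genuine gap at the line ``Then $\varphi_{5,X}$ is generically of degree $2$.'' In Proposition~\ref{bir1} this degree bound follows because M-regularity of $E_{3,x}$ forces $|5K_X|$ to separate any two points of a general fiber $F$ already separated by $|3K_F|$, and by Theorem~\ref{surface} the map $\varphi_{3,F}$ is either birational or of degree exactly $2$. The analogue you obtain from continuous global generation of $E_{2,x}$ is only that $|5K_X|$ separates same-fiber points separated by $|2K_F|$; but $\varphi_{2,F}$ can have large degree, so this yields merely $\deg\varphi_{5,X}\le\deg\varphi_{2,F}$, which is not enough to produce an involution. Without the involution the rest of your contradiction collapses.

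The paper closes this gap by a direct construction when $|3K_F|$ is birational. Given general $x,y\in F$ not separated by $|2K_F|$, one lifts a section $f\in H^0(F,3K_F)$ with $f(x)=0$, $f(y)\neq 0$ to a finite sum $\sum_i\tilde f_i|_F$ with $\tilde f_i\in H^0(X,3K_X+P_i)$, using M-regularity of $a_{X*}\omega_X^3$. Crucially, the hypothesis on $E_{2,x}$ is used beyond CGG: the vanishing $V^1(E_{2,x})=\emptyset$ shows that $H^0(X,2K_X+P)\to\mathbb C_x$ is surjective for \emph{every} $P\in\Pic^0(X)$, so neither $x$ nor $y$ is a base point of any $|2K_X-P_i|$. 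Choosing $\tilde g_i\in H^0(X,2K_X-P_i)$ with $\tilde g_i(y)=1$ and using that $|2K_F|$ does not separate $x,y$ (so $\tilde g_i(x)$ equals a fixed nonzero constant independent of $i$), one checks that $h=\sum_i\tilde g_i\tilde f_i\in H^0(X,5K_X)$ separates $x$ and $y$. Only in the residual case where $|3K_F|$ is not birational---hence of degree $2$ by Theorem~\ref{surface}---does the paper fall back on the involution argument of Proposition~\ref{bir1}.
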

 \begin{proof}
  Since $a_{X*}(\omega_X^2\otimes\cI_x)$ is M-regular for general $x\in X$, the evaluation map
$$\bigoplus_{P\in W}H^0(X, a_{X*}(\omega_X^2\otimes\cI_x)\otimes P)\otimes P^{-1}\rightarrow a_{X*}(\omega_X^2\otimes\cI_x)$$ is surjective for any open
subset $W$ of $\Pic^0(X)$.
 Since $p_2(F)\neq 0$ for general fiber $F$ of $a_X$ and $\Bs|2K_F|$  is a proper closed subset of $F$, for any two general points $x$, $y$ of $X$ lying
 in different general fibers of $a_X$, there exists a point $P\in W$ and a divisor  $D_P\in |2K_X+P|$ passing through $x$ and not passing through $y$. As
 $y\in X$ is general, there exists a smaller open subset $W'\subset W$ such that $y$ is not contained in the base locus of $|3K_X-Q|$ for any $Q\in W'$.
 Thus, by considering the natural map $|2K_X+P|\times |3K_X-P|\rightarrow |5K_X|$, there exists a section in $H^0(5K_X)$ separating $x$ and $y$.

{}From now on, we always assume that $x$ and $y$ are two general points in a general fiber $F$ of $a_X$.

If $|2K_F|$ can separate $x$ and $y$, then $|5K_X|$ can separate $x$ and $y$ by the argument of \cite[Corollary 2.4]{JC-H-mz}.
 Assume that $|3K_F|$ induces a birational map. 
 Take $x, y \in F$
 to be two general points in a general fiber $F$ and that $x$ and $y$ can not be separated by $|2K_F|$.
Since $a_{X*}\omega_X^3$ is IT$^0$,  it is M-regular and, by taking the localization,  the natural map $$\bigoplus_{P\in W}H^0(X, 3K_X+P)\rightarrow
H^0(F, 3K_F)$$ is surjective for any open subset $W$ of $\Pic^0(X)$.
Find a section $f\in H^0(F, 3K_F)$ such that $f(x)=0$ and $f(y)\neq 0$. 
We then choose $\tilde f_i\in H^0(X, 3K_X+P_i)$, $1\leq i\leq k$,  such that $\sum_i\tilde f_i\mid_F=f$.
Since $a_{X*}(\omega_X^2\otimes \cI_x)$ is M-regular, $V^1({a_X}_*(\omega_X^2\otimes \cI_x))=\emptyset $.  For a general $x\in X$, the short exact sequence
$$0\rightarrow {a_X}_*(\omega_X^2\otimes \cI_x)\otimes P \rightarrow {a_X}_*\omega_X^2\otimes P\rightarrow \mathbb C_{x}\rightarrow 0$$ implies the
surjective map
$H^0(2K_X+P)\rightarrow \mathbb C_{x}$ for any $P\in \Pic^0(X)$, which means
that, as a general point of $X$, $x$ is not in the base locus of $|2K_X+P|$ for any $P\in \Pic^0(X)$.
We then take $\tilde g_i\in H^0(X, 2K_X-P_i)$, $1\leq i\leq k$, such that $\tilde g_i(y)=1$ for $1\leq i\leq k$. As $x$ and $y$ can not be separated by
$|2K_F|$, we see that $\tilde g_i(x)\neq 0$  for  $1\leq i\leq k$. We then conclude that $h:=\sum_{1\leq i\leq k}\tilde g_i\tilde f_i\in H^0(X, 5K_X)$
satisfies the property that $h(x)= 0$ and $h(y)\neq 0$.

When $|3K_F|$ does not induce a birational map for $F$, the argument of the second part in the proof of Proposition \ref{bir1} works as well. So we have
completed the proof.
 \end{proof}

\section{Abel-Jacobi map induced by Fixed divisors}
In this section, we use Abel-Jacobi map to analyze the fixed divisors of twisted pluricanonical systems of an irregular variety $X$ with $q(X)=1$. Lemma
\ref{bundle} and \ref{bundle1} will be frequently used in next sections.

\begin{setup}
{\bf Setting.}
Let $X$ be a smooth projective variety of general type and assume that the Albanese morphism $a_X: X\rightarrow E$ of $X$ is a fibration onto an elliptic
curve. Denote by $F$ a general fiber of $a_X$.

Denote by $\cP$ the Poincar\'{e} bundle, on $X\times \PE$, which is pulled back from $E\times\PE$.
Assume that we have a divisor $M$ on $X$ such that  $H^0(X, M\otimes P)\neq 0$ for general $P\in\PE$.
Then $p_{2*}(p_1^*M\otimes\cP)$ is a non-trivial vector bundle on $E$, where $p_1$ and $p_2$ are  the natural projections from $X\times \PE$
to $X$ and $\PE$ respectively.
We have the relative evaluation map:
\begin{eqnarray}\label{evaluation}p_2^*p_{2*}(p_1^*M\otimes\cP)\xrightarrow{v} p_1^*M\otimes\cP.
\end{eqnarray}
Then there exists an ideal sheaf $\cI_M$ on $X\times \PE$ so that $\cI_M\otimes p_1^*M\otimes\cP$ is the image of $v$. Let $\cZ_M$
be the scheme of $ X\times \PE$  defined by $\cI_M$.
\end{setup}

The following lemma is a crucial observation.  Note that a similar observation plays a crucial role in \cite{BLNP} as well, though the context is completely different.

\begin{lemm}\label{regularity} Keep the setting as in 4.1.
Assume that $a_{X*}\cO_X(M)$ is an ample vector bundle on $E$ and $\cZ_M$ does not dominate $X$. Then there exists an open dense subset $U$ of $X$ such
that
for any $x\in U$, $a_{X*}(\cO_X(M)\otimes\cI_x)$ is M-regular. In particular, $a_{X*}(\cO_X(M)\otimes\cI_x)$ is ample on $E$.
\end{lemm}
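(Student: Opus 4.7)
The plan is to translate the non-dominance hypothesis on $\cZ_M$ into a base-point-freeness statement valid simultaneously in $P\in\PE$, and then to combine this with the ampleness of $a_{X*}\cO_X(M)$ via a long exact sequence on $E$.

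First I would put $U:=X\setminus p_1(\cZ_M)$, which is open and dense by assumption. Unwinding the definition of $\cI_M$ as the image ideal of the relative evaluation $v$, a point $(x,P)\in X\times\PE$ lies outside $\cZ_M$ if and only if $v$ is surjective at the fiber over $(x,P)$. Since this fiber evaluation factors through the cohomology-and-base-change map as
$$(p_{2*}(p_1^*M\otimes\cP))_P\otimes k(P)\to H^0(X,\cO_X(M)\otimes a_X^*P)\to k(x),$$
surjectivity of the whole composition forces surjectivity of the absolute evaluation $H^0(X,\cO_X(M)\otimes a_X^*P)\twoheadrightarrow\mathbb{C}_x$ for every $x\in U$ and every $P\in\PE$.

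Fixing such $x$ and $P$, I would next twist the ideal-sheaf sequence of $x$ by $\cO_X(M)\otimes a_X^*P$ and push forward via $a_X$. The surjectivity just obtained ensures that a global section realizes a generator of the skyscraper $\mathbb{C}_{a_X(x)}$, so the pushforward is again a short exact sequence on $E$. The associated $H^\bullet(E,-)$ long exact sequence then kills its own connecting map by the same surjectivity, and the only remaining obstruction is $H^1(E,a_{X*}\cO_X(M)\otimes P)$, which vanishes because $a_{X*}\cO_X(M)$ is ample. One thus concludes $H^1(E,a_{X*}(\cO_X(M)\otimes\cI_x)\otimes P)=0$ for every $P\in\PE$, which is M-regularity. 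The ``in particular'' clause follows automatically on the elliptic curve $E$, since an M-regular vector bundle there has positive degree on each Atiyah summand and is therefore ample.

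The main obstacle, in my view, is the first step: one must check that the scheme-theoretic non-dominance of $\cZ_M$ produces a single point $x\in U$ at which $|M\otimes a_X^*P|$ is base-point-free for \emph{every} $P\in\PE$ simultaneously, rather than a different $x$ for each $P$. Once this uniformity in $P$ is in hand, the cohomological manipulations on $E$ are entirely routine.
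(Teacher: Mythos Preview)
Your proposal is correct and follows essentially the same route as the paper: the paper introduces the set $\cB=\{(x,P)\mid x\in\Bs|M+P|\}$, observes $\cB\subset\cZ_M$ so that $\cB$ does not dominate $X$, takes $U=X\setminus p_1(\cB)$, and then runs the identical short-exact-sequence argument on $E$. Your choice $U=X\setminus p_1(\cZ_M)$ is a (possibly smaller) open dense subset that works for the same reason, and your factoring of $v_{(x,P)}$ through the base-change map is exactly how one sees $\cB\subset\cZ_M$; the cohomological step and the conclusion are identical to the paper's.
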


\begin{proof}
For  a general $P\in\PE$,  one has $$p_{2*}\cO_X(p_1^*M\otimes\cP)\otimes \underline{\mathbb C}_{P}\simeq H^0(X, M\otimes P).$$
Thus, if $\cZ_M$ does not dominate $X$, the following base locus
$$\cB:=\{(x, P)\in X\times \PE\mid x\in \Bs|M+P|\}$$ does not dominate $X$ and denote by  $U$ the open dense subset $X\moins p_1(\cB)$.
Then, for any $x\in U$, we have the short exact sequence
$$0\rightarrow a_{X*}(\cO_X(M)\otimes\cI_x)\rightarrow a_{X*}\cO_X(M)\rightarrow  \underline{\mathbb C}_{ a_X(x)}\rightarrow 0.$$
By definition of $\cB$, we conclude that $H^0(X, a_{X*}\cO_X(M)\otimes P)\rightarrow \mathbb C_{ a_X(x)}$
is surjective for any $P\in\PE$. By assumption, $a_{X*}\cO_X(M)$ is an ample vector bundle on $E$.
By considering the long exact sequence, we conclude that $H^1(E, a_{X*}(\cO_X(M)\otimes\cI_x)\otimes P)=0$ for any $P$.
Thus $a_{X*}(\cO_X(M)\otimes\cI_x)$ is M-regular and, in particular, ample on $E$.
\end{proof}

Now the rest of this section is devoted to studying in detail the case when $\cZ_M$ dominates $X$ via the first projection. We denote by $\cD_M$ the union
of irreducible components, of $\cZ_M$, each of which dominates $X$. We observe that each irreducible component of $\cD_M$ is a divisor of $X\times\PE$
dominating both $X$ and $E$.

\begin{lemm}\label{integral} Keep the setting as in 4.1. Assume that  $\cZ_M$ dominates $X$. Then $\cD_M$ is reduced and irreducible and, for  $P\in\PE$
general , $\cD_{M,P} \colon= \cD_M|_{X \times\{P\}}$ is also reduced and irreducible.
Moreover, the Abel-Jacobi map induced by $\cD$
\begin{eqnarray*}
&\Psi:&\PE\longrightarrow\PE\\
&&P\rightarrow \cO_X(\cD_{M,P}-\cD_{M,0})
\end{eqnarray*}
is the identity and hence $\cD_{M,P}=\cD_{M,0}+P$. Fix a general $P$, we set $L_M \colon= M+P-\cD_{M,P}$. Then $|M+Q|=|L_M|+ \cD_{M,Q}$ for general $Q$.
Furthermore, $V^0(L_M) \ne \PE$.
\end{lemm}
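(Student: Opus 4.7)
The plan is to exploit the see-saw theorem applied to the line bundle $\cF := p_1^*M \otimes \cP \otimes \cO_{X \times \PE}(-\cD_M)$ on $X \times \PE$. First I would verify that each component of $\cD_M$ also dominates $\PE$: since $p_{2*}(p_1^*M \otimes \cP)$ is a nontrivial vector bundle on $\PE$ (hence torsion-free of positive rank on the smooth curve $\PE$), one has $H^0(X, M+Q) \neq 0$ for every $Q \in \PE$, so no fiber $X \times \{Q\}$ can lie in $\cZ_M$. Next I would establish the central identification $p_{2*}\cF = p_{2*}(p_1^*M \otimes \cP)$ of sheaves on $\PE$: given $N \in \Pic(\PE)$, a global section of $p_1^*M \otimes \cP \otimes p_2^*N$ restricts on a general $X \times \{P\}$ to (a twist of) an element of $H^0(X, M+P)$, hence vanishes along the fixed divisor $\cD_{M,P}$; by continuity such sections vanish on all of $\cD_M$ and factor through $\cF \otimes p_2^*N$.

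Applying the see-saw theorem and identifying $\Pic^0(X) = \PE$ via the Albanese isomorphism $a_X^*$, the ``mixed'' components of $\cP$, $p_1^*M$, and $\cO_{X \times \PE}(\cD_M)$ are $\mathrm{id}$, $0$, and $\Psi$ respectively. Hence $\cF$ has mixed part $\mathrm{id} - \Psi$, and its restriction $\cF|_{X \times \{P\}} = \cO_X(L_M(P))$, where $L_M(P) := M + P - \cD_{M,P}$, satisfies $L_M(P) - L_M(0) = (\mathrm{id} - \Psi)(P)$ in $\Pic^0(X)$. If $\Psi \neq \mathrm{id}$, then $\mathrm{id} - \Psi$ is a nonzero endomorphism of the elliptic curve $\PE$, hence surjective, and the family $\{L_M(P)\}_{P \in \PE}$ sweeps out the full coset $[L_M(0)] + \Pic^0(X)$. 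Since $h^0(X, L_M(P)) = h^0(X, M+P) > 0$ for general $P$, every element of this coset is effective, forcing $V^0(L_M(0)) = \PE$. This yields the dichotomy: either $\Psi = \mathrm{id}$, or $V^0(L_M) = \PE$.

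The main obstacle is to rule out $V^0(L_M) = \PE$, which is the final assertion of the lemma and closes the dichotomy to give $\Psi = \mathrm{id}$. For this I would argue by contradiction: using that on the elliptic curve $E$ a vector bundle has $V^0 = \PE$ iff it is ample, the ampleness of $a_{X*}\cO_X(L_M)$ together with the defining relation $L_M + \cD_{M,P_0} \sim M + P_0$ and the effectivity of $\cD_{M,P_0}$ would force some divisorial component beyond $\cD_{M,P_0}$ to lie in the base locus of $|M + P_0|$, contradicting the maximality of $\cD_{M,P_0}$ as the fixed divisor. Once $\Psi = \mathrm{id}$ is in hand, $L_M := L_M(P)$ is independent of the general $P$, and $|M+Q| = |L_M| + \cD_{M,Q}$ follows immediately from $\cD_{M,Q}$ being the fixed divisor of $|M+Q|$ combined with $\cD_{M,Q} \sim \cD_{M,0} + a_X^*Q$. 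For the reducedness and irreducibility of $\cD_M$, I would use $\deg \Psi = 1$: any nontrivial decomposition $\cD_M = \sum n_i \Gamma_i$ with several prime components or a multiplicity $>1$ would produce an Abel--Jacobi map of degree $>1$ on $\PE$, contradicting $\Psi = \mathrm{id}$; the same analysis restricted to a general fiber $X \times \{P\}$ yields the reducedness and irreducibility of $\cD_{M,P}$.
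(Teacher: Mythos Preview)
Your see-saw approach to the dichotomy ``$\Psi=\mathrm{id}$ or $V^0(L_M)=\PE$'' is a genuinely different route from the paper's, and the computation $L_M(P)-L_M(0)=(\mathrm{id}-\Psi)(P)$ is correct. However, two of the three remaining steps have real gaps.

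\textbf{Ruling out $V^0(L_M)=\PE$.} Your claim that a vector bundle on $E$ has $V^0=\PE$ iff it is ample is false: $\cO_E\oplus\cO_E(p)$ has $V^0=\PE$ but is not ample. More seriously, even granting ampleness of $a_{X*}\cO_X(L_M)$, you do not explain how this ``forces some divisorial component beyond $\cD_{M,P_0}$ to lie in the base locus of $|M+P_0|$''. The paper's argument here is quite different and uses the \emph{irreducibility} of the fibres $\cD_{M,P}$: once one knows $\cD_{M,Q}$ is irreducible, any spurious effective divisor produced inside $|M+Q|$ forces $\cD_{M,Q}$ into the fixed part of $|L_M|$, and letting $Q$ vary gives infinitely many distinct irreducible divisors in a fixed base locus, which is absurd. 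You cannot run this without first knowing irreducibility.

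\textbf{Irreducibility and reducedness from $\deg\Psi=1$.} This step is incorrect as stated. If $\cD_M=\Gamma_1+\Gamma_2$ with component Abel--Jacobi maps $\Phi_1,\Phi_2$, then $\Psi=\Phi_1+\Phi_2$ as endomorphisms of $\PE$, and $\deg\Psi=1$ does \emph{not} force a single component: on a CM elliptic curve one can have $\Phi_1+\Phi_2=\mathrm{id}$ with both $\Phi_i$ nontrivial. Similarly, irreducibility of $\cD_M$ does not by itself give irreducibility of the general fibre $\cD_{M,P}$ (one must control the Stein factorisation of $\cD_M\to\PE$). The paper handles all of these cases by a single uniform idea that you are missing: any nontrivial splitting of $\cD_{M,P}$ into pieces with their own Abel--Jacobi maps lets one manufacture a divisor linearly equivalent to, but distinct from, a sub-divisor of the \emph{fixed} divisor $\cD_{M,P}$, contradicting rigidity.

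In short, the paper's order---first reduced/irreducible, then $\Psi$ injective, then $\Psi=\mathrm{id}$---is not accidental: the fixed-divisor contradictions in the later steps require the irreducibility established in the earlier ones. Your reversed order leaves both the $V^0(L_M)\ne\PE$ step and the irreducibility step without a working argument.
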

\begin{proof}
First we prove that each irreducible component of $\cD_M$ is reduced.
Otherwise let $k\cD_1$ (with $k\geq 2$) be an irreducible component of $\cD_M$.
Fix a general element $P\in \PE$, then $k\cD_{1P}$ is a fixed divisor on $X$.
We notice that $\cD_{1}$ is a family of divisors of $X$ parametrized by $\PE$
and the Abel-Jacobi maps:
\begin{eqnarray*}
&\Phi:&\PE\longrightarrow\PE\\
&&P\rightarrow \cO_X(\cD_{1P}-\cD_{10})
\end{eqnarray*}
is a non-trivial isogeny of $\PE$. Hence we may take two general elements $Q_1$ and $Q_2$, different from  $P$, such that $\Phi(Q_1)+\Phi(Q_2)=2\Phi(P)$.
Hence $\cD_{1Q_1}+\cD_{1Q_2}$ is rational equivalent to $2\cD_{1P}$. This contradicts the fact that $k\cD_{1P}$ is a fixed divisor.

If $\cD_M$ has two different irreducible components $\cD_1$ and $\cD_2$. We still consider the Abel-Jacobi maps
\begin{eqnarray*}
&\Phi_i:&\PE\longrightarrow\PE\\
&&P\rightarrow \cO_X(\cD_{iP}-\cD_{i0}),
\end{eqnarray*}
for $i=1,2$. Both $\Phi_1$ and $\Phi_2$ are isogenys of $\PE$. Fix a general $P\in\PE$, then $\cD_{1P}+\cD_{2P}$ is a fixed divisor. Choose a general
$Q\in\PE$ such that for some
$P_1\in \Phi_1^{-1}(\Phi_1(P)-Q)$ and $P_2\in \Phi_2^{-1}(\Phi_2(P)+Q)$, the support of  $\cD_{1P_1}+\cD_{2P_2}$ is
different from the support of $\cD_{1P}+\cD_{2P}$. However $\cD_{1P}+\cD_{2P}\sim_{\textrm{rat}} \cD_{1P_1}+\cD_{2P_2}$, which is a contradiction as well.

Take the normalization $\varepsilon: \cD'\rightarrow \cD_M$. Let $\cD'\xrightarrow{g} C\xrightarrow{t} \PE$ be the Stein factorization of
$\cD'\rightarrow \PE$. If $\cD_{M,P}$ is not irreducible for the general $P\in \PE$, then $\deg t>1$.
Then we consider another Abel-Jacobi map:
\begin{eqnarray*}
&\Phi_C:&C\longrightarrow\PE\\
&&P\rightarrow \cO_X(\varepsilon(\cD'_{c})-\varepsilon(\cD'_{c_0})),
\end{eqnarray*}
for some fixed point $c_0\in C$. Clearly $\Phi_C$ is dominant as well.

For the general $P\in \PE$,  take two different points $c_1, c_2\in t^{-1}(P)$. We have $\varepsilon(\cD'_{c_1})+\varepsilon(\cD'_{c_2})$ is a sub-divisor
of $\cD_P$ and hence a fixed divisor. However, since  $\Phi_C$ is dominant,
there are different points  $c_3, c_4\in C$ such that  $\Phi_C(c_1)+\Phi_C(c_2)=\Phi_C(c_3)+\Phi_C(c_4)$ and then
$\varepsilon(\cD'_{c_1})+\varepsilon(\cD'_{c_2})\sim_{\textrm{rat}} \varepsilon(\cD'_{c_3})+\varepsilon(\cD'_{c_4}),$
a contradiction. Hence $\cD_{M,P}$ is irreducible and reduced for the general $P\in \PE$.

The above argument actually shows that $\Psi$ is injective and hence a group isomorphism from $\PE$ to $\PE$. We prove the last statement by the same
idea. Assume that $\Psi$ is different from the identity, then $\Psi$ is induced by a complex multiplication of $\PE$ and $\Psi-Id$ is also a non-trivial
group isomorphism of $\PE$.
We take a general $P_0\in \PE$ such that $h^0(X, M\otimes P_0)$ is minimal. We denote by $L_M$ the divisor $M+P-\cD_{M,P_0}$. Then, for a general $Q\in
\PE$, there exists $Q'\in \PE$ different from $Q$, such that $\Psi(Q')=\Psi(P_0)+Q-P_0$. This implies that $\cD_{Q'}$ linear equivalent to
$\cD_{P_0}+Q-P_0$. Then $|M+Q|\supset |L_M|+\cD_{Q'}$. Since $Q\in \PE$ general, we conclude that $|M+Q|=|L_M|+\cD_{Q'}$. On the other hand, we know that
$\cD_Q\neq \cD_{Q'}$ is a fixed divisor of the linear system $|M+Q|$, thus $L_M-D_{Q}$ is effective, which is absurd.
 \end{proof}

We continue the study under the assumption of Lemma \ref{integral}. 
Notice that, whenever we take $M=kK_X+a_X^*{\hat{L}}$ with $k\geq 2$, $P_k(F)\neq 0$ and $\hat{L}$ a nef line bundle on $E$, $a_{X*}\cO_X(M)$ is ample
(see for instance \cite[Lemma 2.1]{CH1}). Then $h^0(X, M+P))=r$  is constant for all $P\in\PE$ and hence $|M+P|=|L_M|+\cD_{M,P}$ for all $P\in\PE$.

We now consider the following vector bundles on $E$:
$$\cV_M^0:=a_{X*}\cO_X(L_M) \text{ and } \cV_M^+=a_{X*}\cO_X(\cD_{M,0}).$$
We know that a vector bundle on an elliptic curve is a direct sum of semi-stable vector bundles.
Since $V^0(L_M)\neq \PE$ and $h^0(X, L_M)=r$, $\cV_M^0$ is a direct sum of semi-stable vector bundles of negative slopes and zero slope.
In particular, there is an injection
 $$\cE_M^0:=\cO_E^{\oplus r}\hookrightarrow \cV_M^0$$ which is induced by global sections of $L_M$.

Similarly, $\cV_M^+$ is a direct sum of semistable vector bundle. Since
$$h^0(E, \cV_M^+ \otimes P)=h^0(X, \cD_{M,0} \otimes P)=h^0(X, \cD_{M,P})=1$$ for any general $P\in\PE$, there exist one (and only one) ample direct
summand $\cE_M^{+}$
of $\cV_M^+$  of degree $1$.

We have the natural map $a_*\cO_X(L_M)\otimes a_*\cO_X(\cD_{M,0})\rightarrow a_{X*}\cO_X(M)$ induced by multiplication of sections. Hence we have the
multiplication map
\begin{eqnarray}\label{multiplication}m: (\cO_E^{\oplus r})\otimes \cE_M^+   \hookrightarrow \cV_M^0 \otimes \cV_M^+ \rightarrow    a_{X*}\cO_X(M).
\end{eqnarray}

Recall that $|M+P|=|L_M|+\cD_{M,P}$. Therefore $m$ induces an injective and hence bijective map between sections
$$H^0(E, (\cO_E^{\oplus r})\otimes \cE_M^{+}\otimes P)\xrightarrow{m} H^0(E, a_{X*}\cO_X(M)\otimes P),$$ for any $P\in\PE$. Notice also that both vector
bundles are ample and hence have vanishing $H^1$.
Let $\cF^\cdot \in {\bf D}(E)$ be the mapping cone of the map $m$ in (\ref{multiplication}).  It follows that $H^i(E, \cF^\cdot \otimes P) =0$ for all $i$ and all
$P$. Hence $\cF^\cdot=0$ and therefore $m$ is an isomorphism between vector bundles on $E$.

In summary, we have proved the following lemma.

\begin{lemm}\label{bundle} Assume that $M$ is a divisor on $X$ such that $a_{X*}\cO_X(M)$ is an ample vector bundle on $E$ of degree $r$ and the scheme
$\cZ_M$ defined in (\ref{evaluation}) dominates $X$.
Then we can write $|M|=|L_M|+\cD_{M,0}$ such that $|M+P|=|L_M|+\cD_{M,P}$ for all $P$,
${a_X}_*\cO_X(\cD_0)$ contains an ample vector bundle of degree $1$, denoted $\cE_M^+$,  and
${a_X}_*\cO_X(L_M)$ contains $ \cE_M^0=\cO_E^{\oplus h^0(M)}$.

Furthermore, we have an isomorphism of vector bundles
$$m: \cE_M^0 \otimes \cE_M^+ =(\cO_E^{\oplus h^0(X, M)})\otimes \cE_M^+\xrightarrow{\sim} a_{X*}\cO_X(M).$$
This implies, in particular, that
$$  h^0(X, M) \cdot \textrm{rk}( \cE_M^+) = h^0(F, M|_F).$$
\end{lemm}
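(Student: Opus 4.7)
By Lemma~\ref{integral}, $\cD_{M,P} \sim \cD_{M,0} + P$ for every $P \in \PE$, so setting $L_M := M - \cD_{M,0}$ the identity $|M+P| = |L_M| + \cD_{M,P}$ holds for all $P$, and multiplication by the defining section of $\cD_{M,P}$ identifies $H^0(X, L_M)$ with $H^0(X, M+P)$. By ampleness of $a_{X*}\cO_X(M)$ on $E$, this common dimension is the constant $r := h^0(X, M) = \deg a_{X*}\cO_X(M)$.

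Next I use Atiyah's classification to decompose each pushforward into indecomposable (hence semistable) summands on $E$. For $\cV_M^0 := a_{X*}\cO_X(L_M)$, the hypothesis $V^0(L_M) \neq \PE$ rules out positive-slope summands (otherwise $h^0(\cdot \otimes P) > 0$ for all $P$), so every summand has slope $\leq 0$. The evaluation $H^0(\cV_M^0) \otimes \cO_E \to \cV_M^0$ has image $\cW$, a globally generated subsheaf of $\cV_M^0$; subsheaves of $\cV_M^0$ have slopes $\leq 0$, while globally generated semistable bundles of slope $\leq 0$ on $E$ must be isomorphic to copies of $\cO_E$, so $\cW \cong \cO_E^{\oplus s}$. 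Comparing sections yields $s = r$ and produces the embedding $\cE_M^0 := \cO_E^{\oplus r} \hookrightarrow \cV_M^0$. For $\cV_M^+ := a_{X*}\cO_X(\cD_{M,0})$, each positive-slope summand contributes its degree to $h^0(\cdot \otimes P)$ at every $P$, while slope-$0$ summands contribute nothing at the general twist, so the constraint $h^0(\cV_M^+ \otimes P) = 1$ for general $P$ forces a unique positive-slope summand of degree exactly one; call it $\cE_M^+$, automatically ample.

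The natural multiplication $a_{X*}\cO_X(L_M) \otimes a_{X*}\cO_X(\cD_{M,0}) \to a_{X*}\cO_X(M)$ restricts to
\[
m \colon \cO_E^{\oplus r} \otimes \cE_M^+ \longrightarrow a_{X*}\cO_X(M).
\]
The decomposition $|M+P| = |L_M| + \cD_{M,P}$ makes $m$ injective on twisted global sections: any section of $M+P$ factors uniquely as a section of $L_M$ times the defining section of $\cD_{M,P}$. A dimension count gives
\[
h^0\bigl(\cO_E^{\oplus r} \otimes \cE_M^+ \otimes P\bigr) = r = h^0\bigl(a_{X*}\cO_X(M) \otimes P\bigr),
\]
and ampleness of both sides kills $H^1$ after every twist. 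Hence the mapping cone $\cF^\cdot \in {\bf D}(E)$ satisfies $H^i(\cF^\cdot \otimes P) = 0$ for all $i$ and all $P$, which forces $\cF^\cdot = 0$. So $m$ is an isomorphism of sheaves, and taking generic ranks yields the final identity $h^0(X, M) \cdot \rk \cE_M^+ = \rk a_{X*}\cO_X(M) = h^0(F, M|_F)$.

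The main obstacle I anticipate is the implication ``all twisted hypercohomology vanishes $\Rightarrow$ the complex is zero in ${\bf D}(E)$'', which is the Fourier--Mukai input that promotes the sectionwise bijection into a sheaf-level isomorphism; the remaining bookkeeping of semistable summands on $E$ is routine once the correct vanishings are in hand.
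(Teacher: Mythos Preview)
Your proposal is correct and follows essentially the same route as the paper: invoke Lemma~\ref{integral} for the decomposition $|M+P|=|L_M|+\cD_{M,P}$, use the Atiyah semistable decomposition on $E$ to locate $\cE_M^0=\cO_E^{\oplus r}$ inside $\cV_M^0$ and the unique degree-$1$ ample summand $\cE_M^+$ inside $\cV_M^+$, and then show the multiplication map is an isomorphism via the mapping-cone/Fourier--Mukai vanishing argument. Your write-up is in fact more explicit than the paper's in a few places (e.g.\ the analysis of the image of the evaluation map into $\cV_M^0$); the only minor omission is that you do not verify that this image $\cW$ is semistable before invoking the classification, but this follows immediately since $\cW$ is globally generated (forcing all HN slopes $\geq 0$) and a subsheaf of $\cV_M^0$ (forcing $\mu_{\max}\leq 0$).
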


\begin{defn}
We call the decomposition $|M|=|L_M|+\cD_{M,0}$ in Lemma \ref{bundle}  {\it a uniform decomposition of $|M|$}, in the sense that the decomposition $|M+P|=|L_M|+\cD_{M,P}$ holds for all $P$ uniformly. Moreover, $a_*\cO_X(M) \cong \cE_M^0 \otimes \cE_M^+$ is called  {\it a uniform decomposition of $a_* \cO_X(M)$}.
\end{defn}

 It is possible to have further decomposition. Let $B$ be a fixed divisor of $|L_M|$ and $L'_M=L_M-B$. That is, $|L_M|=|L'_M|+B$. Since one has $V^0(L'_M) \subset V^0(L_M) \ne \PE$, and $h^0(L'_M)=h^0(L_M)$, it follows that  $\cE_M^0 \subset a_* \cO_X(L'_M) \subset a_* \cO_X(L_M)$. Moreover,
$$ \cE_M^0 \otimes \cE_M^+ \stackrel{\cdot B}{\longrightarrow} a_* \cO_X(M)$$ is an isomorphism.

\begin{lemm} \label{modification} Given a divisor $M$ on $X$ such that $a_* \cO_X(M)$ is ample and $\cZ_M$ dominates $X$. After replacing $X$ by its birational model
and replacing $M$ by its total transform, there exists a uniform decomposition $|M| = |L_M| + \cD_{M,0} + E_M$ such that ${L}_M$ is base point free and ${\cD}_{M,0}$ is nef and smooth. Moreover, there is an isomorphism of vector bundles
$$ \cE_M^0 \otimes \cE_M^+ \stackrel{\cdot E_M}{\longrightarrow} a_* \cO_X(M).$$

This decomposition  is called {\it a refined uniform decomposition}. The induced isomorphism of vector bundles  is called {\it a refined uniform decomposition} of $a_*\cO_X(M)$.
\end{lemm}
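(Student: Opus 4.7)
The plan is to improve the uniform decomposition of Lemma \ref{bundle} via a single birational modification, then argue that the desired isomorphism is preserved because it lives downstairs on $E$.

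First I note that the family $\{\cD_{M,P}\}_{P\in\PE}$ is genuinely moving: by Lemma \ref{integral}, $\cD_{M,P}$ is reduced and irreducible for general $P$, and distinct $P$ yield non-equivalent line bundles, hence divisors with distinct reduced supports. Moreover, all members of the family are numerically equivalent since they differ by torsion classes in $\Pic^0(X)$. I plan to exploit this moving-family structure to obtain the nefness of the refined $\cD_{M,0}$ on the birational model.

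Next I apply Hironaka's theorem to the universal divisor $\cD_M \subset X \times \PE$ to produce $\mu\colon\tilde X \to X$ with the following properties: (i) the movable part $\tilde L_M$ of $\mu^*|L_M|$ is base point free; (ii) the strict transform $\tilde\cD_{M,0}$ of $\cD_{M,0}$ on $\tilde X$ is smooth; (iii) for general $P\in\PE$, the strict transform $\tilde\cD_{M,P}$ is smooth as well, and the exceptional contribution $\mu^*\cD_{M,P} - \tilde\cD_{M,P}$ is an effective $\mu$-exceptional divisor independent of $P$. Writing $\mu^*L_M = \tilde L_M + B_1$ with $B_1$ the fixed part, and $\mu^*\cD_{M,0} = \tilde\cD_{M,0} + B_2$ with $B_2$ effective $\mu$-exceptional, and setting $E_M := B_1 + B_2$, one obtains the decomposition $\mu^*M = \tilde L_M + \tilde\cD_{M,0} + E_M$. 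After relabelling, the first two properties required by the lemma are immediate by construction, and for nefness the strict transforms $\{\tilde\cD_{M,P}\}_{P\in\PE}$ form an algebraic family of numerically equivalent effective divisors on $\tilde X$ whose common base locus is contained in the exceptional locus of $\mu$; hence for any irreducible curve $\tilde C\subset\tilde X$ there exists $P$ with $\tilde C\not\subset\tilde\cD_{M,P}$, giving $\tilde\cD_{M,0}\cdot\tilde C = \tilde\cD_{M,P}\cdot\tilde C \geq 0$. For the isomorphism, the equality $\mu_*\cO_{\tilde X}(\mu^*M) = \cO_X(M)$ gives $a_*\cO_{\tilde X}(\mu^*M) = a_*\cO_X(M)$, so the isomorphism of Lemma \ref{bundle} transfers verbatim; the factorization through multiplication by the canonical section of $\cO_{\tilde X}(E_M)$ comes from lifting sections of $L_M$ and $\cD_{M,0}$ to sections of $\tilde L_M$ and $\tilde\cD_{M,0}$ respectively, and then pairing with the tautological section cutting out $E_M$.

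The main obstacle is point (iii), arranging that the fixed exceptional divisor $B_2$ in $\mu^*\cD_{M,P} = \tilde\cD_{M,P} + B_2$ is independent of $P$ for general $P$: this is what makes the strict transforms a \emph{numerically} equivalent family and hence delivers nefness via the moving-family argument. This is achieved by choosing $\mu$ as a simultaneous embedded resolution of the universal divisor $\cD_M$ in $X\times\PE$ rather than of $\cD_{M,0}$ alone, so that the centers of the blow-ups lie over loci along which the family has locally constant multiplicity; this may require additional blow-ups beyond those needed merely to smoothen $\cD_{M,0}$ and to resolve the base locus of $|L_M|$.
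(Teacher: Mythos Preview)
Your proposal is correct and follows essentially the same approach as the paper: resolve the base locus of $|L_M|$ and simultaneously resolve the family $\{\cD_{M,P}\}_{P\in\PE}$ (the paper phrases this as resolving $\bigcap_{P\in\PE}\cD_{M,P}$), then use the moving-family argument for nefness and the invariance of $a_{X*}\cO_X(M)$ under birational pullback for the bundle isomorphism. One minor slip: the $\cD_{M,P}$ differ by classes in $\Pic^0(X)$, not by \emph{torsion} classes, but since $\Pic^0$-classes are numerically trivial this does not affect your argument.
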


\begin{proof}
Consider $\pi: \tilde{X} \to X$  a birational morphism resolving the base loci of $|L_M|$ and resolving $\cap_{P \in \PE} \cD_{M,P}$. Let $\tilde{L}_M$ and $\tilde{\cD}_{M,P}$ be the proper transform of $L_M$ and $\cD_{M,P}$ respectively.
Note that for any curve $C$, if $C \subset \text{Supp}(\tilde{\cD}_{M,P})$, then $C \not \subset \text{Supp}(\tilde{\cD}_{M,Q})$ for general $Q$.
Hence $C \cdot (\tilde{\cD}_{M,P}) = C \cdot (\tilde{\cD}_{M,Q})  \ge 0$, that is $\tilde{\cD}_{M,P}$ is nef.

Similarly, one can show that there exists $\tilde{\cE}_M^0$, $\tilde{\cE}_M^+$ subbundles of $a_* \pi_* \cO_{\tilde{X}}(\tilde{L}_M)$ and $a_* \pi_* \cO_{\tilde{X}}(\tilde{\cD}_{M,0})$ respectively such that
$$ \tilde{\cE}_M^0 \otimes \tilde{\cE}_M^+ \hookrightarrow a_* \cO_{\tilde{X}}(\tilde{L}_M) \otimes a_* \cO_{\tilde{X}}(\tilde{\cD}_{M,0}) \stackrel{\cdot E_M}{\longrightarrow} a_*\cO_{\tilde{X}} (\pi^*M) = a_*\cO_X(M)$$ is an isomorphism.
Note that $h^0(\tilde{X}, \tilde{L}_M)= h^0(X, L_M)= h^0(X, M)$, hence $\tilde{\cE}_M^0 ={\cE}_M^0$ and therefore $\tilde{\cE}_M^+ \cong {\cE}_M^+$.
 \end{proof}


We will need the following facts.
\begin{lemm} \label{decomp}
Let $L$ be a line bundle on a smooth projective variety. Assume that $L=L_1\otimes L_2$ is the tensor product of two line bundles and there exists
subspaces $W_1\subset H^0(L_1)$ and $W_2\subset H^0(L_2)$ such that the natural map $W_1\otimes W_2\rightarrow H^0(L)$ is an isomorphism. Then the following holds.
\begin{enumerate}
\item If $L$ is free, then both $L_1$ and $L_2$ are free.
\item If $\dim W_1=1$,  then $W_1=H^0(L_1)$, $W_2=H^0(L_2)$ and $|L|=|L_2|+D_1$, where $D_1$ is the unique effective divisor in $|L_1|$.
\end{enumerate}
\end{lemm}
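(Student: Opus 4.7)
The overall strategy exploits two basic facts: line bundle fibers are one-dimensional, so a pure tensor $a\otimes b \in (L_1)_x \otimes (L_2)_x$ is nonzero iff both factors are; and a nonzero global section of a line bundle is a non-zero-divisor among rational sections.

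For (1), I would fix an arbitrary $x \in X$ and, using global generation of $L$, pick $s \in H^0(L)$ with $s(x) \ne 0$. By hypothesis $s$ is a finite sum $\sum_i s_{1,i}\cdot s_{2,i}$ with $s_{1,i} \in W_1$, $s_{2,i} \in W_2$. Evaluating at $x$ in local trivializations of $L_1$ and $L_2$ writes $s(x) = \bigl(\sum_i a_i b_i\bigr)\, e_1 \otimes e_2$, so some $a_i b_i \ne 0$, meaning that some $s_{1,i}$ and some $s_{2,i}$ do not vanish at $x$. Since $x$ was arbitrary, $L_1$ and $L_2$ are globally generated.

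For (2), write $W_1 = \mathbb{C}\cdot s_1$ and set $D_1 := \mathrm{div}(s_1) \in |L_1|$. I would first show $W_2 = H^0(L_2)$: multiplication by $s_1$ is an injection $H^0(L_2) \hookrightarrow H^0(L)$ whose image contains $s_1\cdot W_2 = H^0(L)$, forcing $H^0(L_2) = W_2$. The harder step, where I expect the main (small) obstacle, is to show $H^0(L_1) = \mathbb{C}\cdot s_1$. Given any $s_1' \in H^0(L_1)$, the rational function $f := s_1'/s_1$ on $X$ satisfies $f \cdot H^0(L_2) \subseteq H^0(L_2)$, since $s_1' w \in H^0(L) = s_1 \cdot H^0(L_2)$ for every $w \in H^0(L_2)$. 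Multiplication by $f$ is thus a $\mathbb{C}$-linear endomorphism of the finite-dimensional space $H^0(L_2)$, so its characteristic polynomial $p(T)$ annihilates this endomorphism; applied to a nonzero $w \in H^0(L_2)$ this forces the rational section $p(f)\cdot w$ to vanish, hence $p(f) = 0$ in $\mathbb{C}(X)$. Therefore $f$ is algebraic over $\mathbb{C}$, and since $\mathbb{C}$ is algebraically closed, $f$ is a constant, so $s_1' \in \mathbb{C}\cdot s_1 = W_1$. With both $W_i = H^0(L_i)$ established, the identity $|L| = D_1 + |L_2|$ on the level of divisors is immediate, since every section of $L$ factors uniquely as $s_1 \cdot w$ with $w \in H^0(L_2)$.
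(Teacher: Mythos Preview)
Your proof is correct. Part (1) is essentially identical to the paper's argument: the paper phrases it contrapositively (a common zero of a basis of $W_1$ would be a base point of $|L|$), while you phrase it directly, but the content is the same.

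For part (2), the reduction $W_2=H^0(L_2)$ is again the same dimension count as in the paper: multiplication by $s_1$ embeds $H^0(L_2)$ into $H^0(L)$, while $s_1\cdot W_2$ already fills $H^0(L)$. The step $W_1=H^0(L_1)$, however, is handled differently. The paper simply notes that $D_1=\mathrm{div}(s_1)$ is contained in every divisor of $|L|$ (since every section is $s_1\cdot w$) and asserts that this forces $h^0(L_1)=h^0(D_1)=1$; this is stated without further justification. Your route---showing that $f=s_1'/s_1$ acts on the finite-dimensional space $H^0(L_2)$, applying Cayley--Hamilton, and concluding that $f$ is algebraic over $\mathbb{C}$ and hence constant---is a genuinely different and fully self-contained argument. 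It has the advantage of making the underlying mechanism explicit (an integrality argument over the function field), at the cost of being slightly longer than the paper's one-line claim.

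One small implicit hypothesis worth recording: your Cayley--Hamilton step needs $H^0(L_2)\neq 0$ in order to deduce $p(f)=0$ in $\mathbb{C}(X)$ from $p(f)\cdot w=0$. This is automatic in the paper's applications (where $h^0(L)>0$), but strictly speaking the statement can fail when $h^0(L)=0$.
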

\begin{proof}
(1). Let $s_1,\ldots s_n$ and $t_1, \ldots t_m$ be basis of $W_1,W_2$ respectively. Then $\{ s_i t_j\}$ forms a basis of $H^0(L)$. Suppose that $s_1,\ldots s_n$ has a common zero at $x \in X$, then  $x$ is a common zero of $ \{ s_i t_j\}$ for all $i,j$, which is absurd.
Hence $W_1$ is free and so is $L_1$.
The same argument holds for $W_2$ and $L_2$.

(2). Since $\dim W_1=1$, we then have $h^0(L)=\dim W_2$. On the other hand, $h^0(L)\geq h^0(L_2)\geq \dim W_2$. Hence $W_2=H^0(L_2)$.
Let $0\neq s \in W_1$ and let $D$ be the divisor $\text{div}(s)$. Then $W_1\otimes W_2=H^0(L)$ implies that $D$ is a fixed divisor of the linear system $|L|$. In
particular, $h^0(L_1)=h^0(D)=1$.
\end{proof}
\begin{cor}\label{product} Apply the above Lemma to a refined uniform decomposition $|M|=L_M+ \cD_{M,0}+E_M$,  one has the following:
\begin{enumerate}

\item If $\textrm{rk}(\cE_M^+)=1$ or $\textrm{rk} (\cE_M^0)=1$, then $\cE_M^+=\cV_M^+$ and $\cE_M^0=\cV_M^0$.


\item If $\textrm{rk} (\cV_M^0) =1$, then  $\textrm{rk} (\cE^0)=1$ and hence $h^0(X,M)=1$.


\item Let $L_{M,F}$ and $\cD_{M,F}$ denote the restriction of $L_M$ and
    $\cD_{M,0}$ to general fiber $F$ respectively. If $|M_F|$ is free, then both $|L_{M,F}|$ and $|\cD_{M,F}|$ are free,

\item  If $\textrm{rk}(\cE_M^+) =1$, then $\cD_{M,0} \equiv F$ for a general fiber $F$. Moreover, the map restricting to general fiber $H^0(X, M) \to
    H^0(F, M_F)$ is surjective.

 \end{enumerate}
\end{cor}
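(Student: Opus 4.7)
The plan is to apply Lemma \ref{decomp} to the vector-bundle isomorphism $m\colon \cE_M^0 \otimes \cE_M^+ \xrightarrow{\sim} a_{X*}\cO_X(M)$ from Lemma \ref{bundle} (in its refined form from Lemma \ref{modification}). Part (2) is immediate: the inclusion $\cE_M^0 = \cO_E^{\oplus h^0(X,M)} \hookrightarrow \cV_M^0$ gives $h^0(X,M) = \rk(\cE_M^0) \le \rk(\cV_M^0) = 1$, while ampleness of $a_{X*}\cO_X(M)$ forces $h^0(X,M) \ge 1$.

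For Parts (1), (3), and (4), I would pass to a general fiber $F = F_p$ of $a_X$ chosen so that $E_M$ is disjoint from $F$; this is possible because $E_M$ is supported on the exceptional locus of the resolution $\pi$ of Lemma \ref{modification}, which lies over a codimension-$\ge 2$ subscheme of $X$. Taking fibers of $m$ at $p$, flat base change yields a multiplication isomorphism
\[ W_0 \otimes W_+ \xrightarrow{\,\sim\,} H^0(F, M|_F), \]
with $W_0 = (\cE_M^0)_p \subseteq H^0(F, L_{M,F})$ and $W_+ = (\cE_M^+)_p \subseteq H^0(F, \cD_{M,F})$. Part (3) is then an immediate application of Lemma \ref{decomp}(1): freeness of $|M_F|$ implies freeness of the sub-systems $W_0$ and $W_+$, and hence of the larger complete linear systems $|L_{M,F}|$ and $|\cD_{M,F}|$.

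For Part (1), the assumption $\rk(\cE_M^+) = 1$ forces $\dim W_+ = 1$, so Lemma \ref{decomp}(2) (with roles swapped) yields $W_+ = H^0(F, \cD_{M,F})$ and $W_0 = H^0(F, L_{M,F})$; in particular the inclusions $\cE_M^+ \hookrightarrow \cV_M^+$ and $\cE_M^0 \hookrightarrow \cV_M^0$ are surjective on the fiber at the general point $p$. Since $\cE_M^+$ is by construction a direct summand of $\cV_M^+$, this already forces $\cE_M^+ = \cV_M^+$; a parallel rank-and-degree computation on $E$, using that $\cV_M^0$ has slope $\le 0$ (by $V^0(L_M) \ne \PE$ and Atiyah's classification), forces $\cE_M^0 = \cV_M^0$ as well. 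The case $\rk(\cE_M^0) = 1$ is symmetric.

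For Part (4) --- the subtlest step in the plan --- I first use Part (1) to identify $\cV_M^+ = \cE_M^+$ as a degree-$1$ ample line bundle on $E$, so that $h^0(F, \cD_{M,F}) = 1$. By Lemma \ref{integral} one has $\cD_{M,P}|_F \sim \cD_{M,F}$ for every $P$ (as $(a_X^*P)|_F \sim 0$), and one-dimensionality of $|\cD_{M,F}|$ then forces the equality of effective divisors $\cD_{M,P} \cap F = \cD_{M,F}$ for every $P \in \PE$. The key geometric step, ruling out a horizontal component of $\cD_{M,0}$, is this: if $\cD_{M,0}$ dominated $E$ then each $\cD_{M,P}$ (irreducible reduced in the same numerical class) would dominate $E$ as well, but the $\cD_{M,P}$ would all agree on every general fiber of $a_X$ and hence coincide as divisors on $X$, contradicting the injectivity of $P \mapsto \cD_{M,P}$ proved in Lemma \ref{integral}. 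So $\cD_{M,0}$ is vertical, and being irreducible reduced of codimension one must equal a single fiber of $a_X$, which gives $\cD_{M,0} \equiv F$. Finally, the surjectivity of $H^0(X, M) \to H^0(F, M|_F)$ follows from $a_{X*}\cO_X(M) \cong (\cE_M^+)^{\oplus h^0(X,M)}$ being a direct sum of copies of a degree-$1$ ample line bundle on $E$, hence globally generated at a general point $p \in E$.
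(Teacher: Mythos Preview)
Your overall strategy is sound and Parts (1)--(3) are essentially what the paper has in mind (the paper just says ``the first three statements are clear''). One small gap: your justification that $E_M$ is disjoint from a general fiber --- ``$E_M$ is supported on the exceptional locus of $\pi$'' --- is not correct as stated, since in the refined decomposition $E_M$ also absorbs any fixed component of the original $|L_M|$, which can be horizontal. This does not damage the argument, however. For Part~(3) the freeness of $|M_F|$ itself forces $E_{M,F}=0$, since every section in the image of $W_0\otimes W_+\xrightarrow{\cdot e_F}H^0(F,M_F)$ vanishes along $E_{M,F}$. For Part~(1) you do not need $E_{M,F}=0$ at all: the multiplication $W_0\otimes W_+\to H^0(F,L_{M,F}+\cD_{M,F})$ is already an isomorphism (it is injective, and $h^0(F,L_{M,F}+\cD_{M,F})\le h^0(F,M_F)=\dim(W_0\otimes W_+)$ via the inclusion $\cdot e_F$), so Lemma~\ref{decomp} applies directly with $L=L_{M,F}+\cD_{M,F}$.

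For Part~(4) your route and the paper's differ. The paper argues directly on $E$: since $\cE_M^+\cong\cO_E(p)$ is a degree-$1$ line subbundle of $a_{X*}\cO_X(\cD_{M,0})$, the pullback $a_X^{-1}(p)$ is a subdivisor of $\cD_{M,0}$; twisting by $P$ and using irreducibility of $\cD_{M,P}$ for general $P$ immediately gives $\cD_{M,P}=a_X^{-1}(\text{point})$. Your argument instead goes through fibers of $a_X$: from $\rk(\cV_M^+)=1$ you get $h^0(F,\cD_{M,F})=1$, so the horizontal restrictions $\cD_{M,P}\cap F$ are all equal, forcing the $\cD_{M,P}$ to coincide if they dominated $E$, contradicting injectivity of $P\mapsto\cD_{M,P}$. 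Both are correct; the paper's is shorter and avoids the auxiliary step through Part~(1), while yours makes the role of the fiberwise linear system more transparent.
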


\begin{proof} The first three statements are clear. We  prove the last statement.
If  $\cE_M^+$ is a line bundle of degree $1$, denoted $\cO_E(p)$ for some $p \in E$. Then  $ \cO_E(p)
\subset a_* \cO_X(\cD_{M,0})$ and hence $a_X^* \cO_E( p) \otimes P \subset \cD_{M,P}$ for all $P$. For general $P$, $\cD_{M,P}$ is irreducible containing the fiber $F$ such that $\cO_X(F)= a_X^* \cO_E( p) \otimes P$. In particular, $\cD_{M,0} \equiv F$.

Now $a_{X*}\cO_X(M)=(\cO_E^{\oplus h^0(X, M)})\otimes \cE_M^+ = {\cE_M^+}^{\oplus h^0(X, M)}$, which is clearly generically generated by global sections. In particular,
the map restricting to general fiber $H^0(X, M) \to H^0(F, M_F)$ is surjective.
\end{proof}

\begin{lemm}\label{bundle1} Keep the same  assumption as that of Lemma \ref{bundle}.
If $M=M_1+M_2$ such that $a_{X*}\cO_X(M_1)$ is a non-trivial
nef vector bundle and $a_{X*}\cO_X(M_2)$ is an ample vector bundle. Let $M=L_M+\cD_{M,0}$ be a uniform decomposition. Then the following holds.
\begin{enumerate}
\item  There exists an element $Q_0 \in\Pic^0(E)$
such that $V^0(M_1)=\{Q_0\}$;

\item  $M_2-Q_0$ admits a uniform decomposition $M_2-Q_0=L_{M_2} + \cD_{M,0}$ such that $|M_2-Q_0+P|=|L_{M_2}|+\cD_{M,P}$ for all $P\in\PE$;

\item  $a_{X*}\cO_X(M_2-Q_0)= \cE_{M_2}^0 \otimes \cE_{M}^+$, where $\cE_{M_2}^0=\cO_E^{\oplus h^0(X, M_2)}$;

\item $ h^0(X, M_2) \cdot \textrm{rk} (\cE_{M}^+) = h^0(F, M_{2,F}),$
where $M_{2,F}$ is the restriction of $M_2$ to the general fiber $F$.

\item $L_M=L_{M_2}+Q_0+M_1$ and the restriction to $F$ yields $L_{M,F}=L_{M_2,F}+M_{1,F}$.
\end{enumerate}
\end{lemm}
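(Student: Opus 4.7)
The strategy is to apply Lemma~\ref{bundle} to the line bundle $M_2-Q_0$ for a suitably chosen $Q_0\in V^0(M_1)$. The central tool is a ``peeling off'' argument via multiplication by a nonzero section $s\in H^0(X,M_1+Q_0)$: this identifies $|M_2-Q_0+P|$ as a sub-system of $|M+P|$ whose structure is controlled by the uniform decomposition of $M$. Parts (2)--(5) follow by transferring the decomposition, while part (1) (uniqueness of $Q_0$) needs an additional Krull--Schmidt argument.

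First I would pick any $Q_0\in V^0(M_1)$, which exists because $a_{X*}\cO_X(M_1)$ is nontrivial and nef on the elliptic curve $E$. Fix a nonzero $s\in H^0(X,M_1+Q_0)$ with divisor $B=\mathrm{div}(s)$. For each $P\in\PE$, multiplication by $s$ yields an injection
\[H^0(X,M_2-Q_0+P)\hookrightarrow H^0(X,M+P),\]
whose image is the subspace of sections vanishing along the fixed divisor $B$. By the uniform decomposition $|M+P|=|L_M|+\cD_{M,P}$, every section of $M+P$ vanishes on $\cD_{M,P}$; and by Lemma~\ref{integral}, for general $P$ the divisor $\cD_{M,P}$ is irreducible and varies with $P$, hence shares no component with the fixed $B$. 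Dividing by $s$, every section of $M_2-Q_0+P$ vanishes on $\cD_{M,P}$; upper-semicontinuity extends this to all $P$. In particular, $\cZ_{M_2-Q_0}$ dominates $X$.

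Next I would invoke Lemma~\ref{bundle} for $M_2-Q_0$, whose pushforward is ample (a $\PE$-twist of the ample $a_{X*}\cO_X(M_2)$). This yields a uniform decomposition $|M_2-Q_0+P|=|L_{M_2}|+\cD_{M_2-Q_0,P}$ together with $a_{X*}\cO_X(M_2-Q_0)=\cE_{M_2}^0\otimes\cE_{M_2-Q_0}^+$ and $\cE_{M_2}^0=\cO_E^{\oplus h^0(M_2-Q_0)}$. The containment $\cD_{M_2-Q_0,P}\geq\cD_{M,P}$ together with the irreducibility and reducedness of both sides for general $P$ (Lemma~\ref{integral}) forces $\cD_{M_2-Q_0,P}=\cD_{M,P}$. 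Hence $L_{M_2}=M_2-Q_0-\cD_{M,0}$ and $\cE_{M_2-Q_0}^+=\cE_M^+$, the unique ample degree-$1$ summand of $a_{X*}\cO_X(\cD_{M,0})$. Ampleness of $a_{X*}\cO_X(M_2)$ gives $h^0(M_2-Q_0)=h^0(M_2)$, establishing parts (2)--(4). For (5), combining $M=L_M+\cD_{M,0}$ with $M_2-Q_0=L_{M_2}+\cD_{M,0}$ yields $L_M=M_1+Q_0+L_{M_2}$, and restriction to $F$ gives the fiber identity.

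The main obstacle is part (1), the uniqueness of $Q_0$. A second element $Q_0'\in V^0(M_1)$ would give, by (3), an isomorphism $\cE_M^{+\oplus h^0(M_2)}\otimes Q_0\cong\cE_M^{+\oplus h^0(M_2)}\otimes Q_0'$ of $a_{X*}\cO_X(M_2)$; by Krull--Schmidt and indecomposability of $\cE_M^+$, this forces $Q_0-Q_0'$ into the finite stabilizer subgroup $\{L\in\PE:\cE_M^+\otimes L\cong\cE_M^+\}$. Upgrading this finiteness to strict equality is the delicate point; I would analyze the natural multiplication $a_{X*}\cO_X(M_1)\otimes a_{X*}\cO_X(M_2)\to a_{X*}\cO_X(M)$ together with the explicit form of $a_{X*}\cO_X(M_2)$ from (3), arguing that $a_{X*}\cO_X(M_1)$ must be a direct sum of Atiyah bundles all twisted by the same line bundle $Q_0^{-1}$, so that $V^0(M_1)=\{Q_0\}$.
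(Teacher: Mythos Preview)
Your treatment of parts (2)--(5) is correct and essentially identical to the paper's: fix a section of $M_1+Q_0$, embed $|M_2-Q_0+P|$ into $|M+P|$, observe that the irreducible moving divisor $\cD_{M,P}$ cannot be absorbed by the fixed divisor $B$, and then transfer the uniform decomposition. The paper argues this a bit more tersely but the substance is the same.

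For part (1), however, your Krull--Schmidt route is both more complicated and genuinely incomplete. You correctly note that it only yields $Q_0-Q_0'$ lying in the finite stabilizer of $\cE_M^+$ (which for an indecomposable degree-$1$ bundle of rank $r$ on $E$ is the full $r$-torsion of $\PE$), and your proposed upgrade via the multiplication map and Atiyah-bundle structure of $a_{X*}\cO_X(M_1)$ is vague; it is not clear how to extract that structure from the data at hand. The paper instead uses a direct geometric argument that you already have all the ingredients for. Namely, if $Q_1\neq Q_0$ also lies in $V^0(M_1)$, then running your same ``peeling off'' argument with $Q_1$ shows that $\cD_{M,P}$ is a fixed divisor of $|M_2-Q_1+P|$ for general $P$. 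Shifting $P$ by $Q_0$ and $Q_1$ respectively, one finds that both $\cD_{M,P+Q_0}$ and $\cD_{M,P+Q_1}$ occur as fixed divisors of the same linear system $|M_2+P|$; since the full fixed part is the irreducible reduced $\cD_{M,P+Q_0}$, this forces $\cD_{M,P+Q_0}=\cD_{M,P+Q_1}$ for general $P$. But Lemma~\ref{integral} says the Abel--Jacobi map $P\mapsto \cO_X(\cD_{M,P}-\cD_{M,0})$ is the identity, hence injective, so $P+Q_0=P+Q_1$, a contradiction. This bypasses any bundle-theoretic subtleties entirely.
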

\begin{proof}
Since $a_{X*}\cO_X(M_1)$ is a non-trivial nef vector bundle, there exists $Q_0 \in\PE$ such that $H^0(X, \cO_X(M_1)\otimes Q_0)\neq 0$ and we fix a
divisor $G_{Q_0} \in |M_1+Q_0|$. We have $G_{Q_0}+|M_2-Q_0+P|\subset |M+P|=|L_M|+\cD_{M,P}$.
Since $\cD_{M,P}$ is irreducible and $\cD_M$ dominates $X$, therefore for general $P$, $\cD_{M,P} \nsubseteq \Supp(G_{Q_0})$ and hence $\cD_{M,P}$ is a
fixed part of the linear system $|M_2-Q_0+P|$. Since $h^0(X, \cO_X(M_2)\otimes P)$ is constant for every $P\in \PE$, $\cD_P$ is a fixed part for all
$|M_2-Q_0+P|$. By the similar argument as in Lemma \ref{bundle}, we see the desired structure of
$a_{X*}\cO_X(M_2-Q_0)$.

If there exists $Q_1\in V^0(M_1)$ different from $Q_0$, then by the same argument we conclude that $\cD_{M, P}$ is a fixed divisor of $|M_2-Q_1+P|$
for $P\in \PE$. We then conclude that the Abel-Jacobi map for the fixed divisors of $|M_2+P|$ is not the identity of $\PE$, which contradicts  Lemma \ref{integral}. Hence $V^0(M_1)=\{Q_0\}$.
\end{proof}

In the rest of this article, we focus on the following threefolds.

\begin{defn}
A threefold $X$ is said to be a {\it special irregular threefold}, if $X$ is a threefold of general type  satisfying Condition $(\dag)$, i,e, $\chi(X,
\omega_X) \le 0$ and the Albanese image is an elliptic curve,  and for $x\in X$ general, both $E_{2,x}:=a_{X*}(\omega_X^2\otimes \cI_x)$ and
$E_{3,x}:=a_{X*}(\omega_X^3\otimes \cI_x)$ are not M-regular.
\end{defn}

We shall work on special irregular threefolds from now on.

According to Lemma \ref{regularity}, $\cZ_{3K_X}$ dominates $X$ when $X$ is a special irregular threefold.  We now apply
Lemma \ref{bundle} and get a uniform decomposition $3K_X=L_{3K}+\cD_{3K,0}$ with $|3K_X|=|L_{3K}|+\cD_{3K,0}$ and
$$a_{X*}\cO_X(3K_X)=\cE_{3K}^0 \otimes \cE_{3K}^+. \eqno{\ddagger_3}$$
For the general fiber $F$ of $a_X$, we may write the restriction as $3K_F=L_{3K,F}+\cD_{3K, F}$. We will denote by $F_0$ the minimal model of $F$ and
denote by $\sigma: F\rightarrow F_0$ the contraction morphism.

We have $$H^0(F, 3K_F)\simeq W_3^0 \otimes W_3^+ ,$$ where $W_3^0$ (resp. $W_3^+$) is the corresponding subspace of $ H^0(F, L_{3K,F})$ (resp. $H^0(F, \cD_{3,F})$) and
since $a_{X*}\cO_X(L_{3K})$ contains $\cE_{3K}^0= \cO_E^{h^0(X,3K_X)}$ we have
$$h^0(F, L_{3K,F}) = \textrm{rk}(a_{X*}\cO_X(L_3)) \ge  \textrm{rk}(\cE_{3K}^0) = P_3(X)>1.$$


Let $\varphi \colon X \to Z$ be the Stein factorization of the bi-canonical (resp. tri-canonical)  map if $P_2(X) \ge 2$ (resp. $P_3(X) \ge 2$). Let $S$ be the general hyperplane section if $ \dim \varphi(X) \ge 2$ or general irreducible fiber of $\varphi$ if $\dim \varphi(X)=1$. Then we
will prove that $|5K+P|$ is birational by studying the restriction $|5K|_S$ in the remaining part of the article.\\

\section{Irregular threefolds whose Albanese fibers have $p_g(F)\neq 0$}

The aim of this section is to prove the main theorem for special irregular threefolds with $p_g(F) \ne 0$.
If $p_g(F)\neq 0$, then $a_*\cO_X(K_X)$ is non-trivial.

Considering the divisor $3K_X=K_X+2K_X$ and refined uniform decomposition $|3K_X+P|=|L_{3K}|+\cD_{3K, P}+E_3$, we may apply Lemma \ref{bundle} and Lemma \ref{bundle1} to get:
$$\left\{ \begin{array}{l}
a_*\cO_X(K_X)=(Q^{-1})^{\oplus p_g(F)}, \text{ for\; some } Q\in \PE; \\

|2K_X+P-Q|=|L_{2K}|+\cD_{3K, P}+E_2; \\

a_*\cO_X(2K_X-Q) = \cO_E^{h^0(2K_X)} \otimes \cE_{3K}^+; \\

a_*\cO_X(3K_X) = \cO_E^{h^0(3K_X)} \otimes \cE_{3K}^+;

\end{array}\right.
$$
where the first formula holds because we know that have the decomposition formula for $a_{X*}\cO_X(K_X)$ (see the main theorem in \cite{PPS} ) and the
fact that $V^0(K_X)=\{Q\}$ by Lemma \ref{bundle1} and, as in Remark \ref{modification}, we may also assume that $|L_{2K}|$ is base point free.
For simplicity of notations, we may also write $\cD_P$ instead of $\cD_{3K, P}$.

As before, we denote by $L_{3K, F}$ (resp. $L_{2K, F}$) the restriction of $L_{3K}$ (resp. $L_{2K}$) to a general fiber $F$ of $a_X$. We note that
$\cD_{P_1, F}\simeq \cD_{P_2, F}$ for $P_1$, $P_2\in \Pic^0(E)$ and we denote by $\cD_F$ this divisor.

We know that the natural map \begin{eqnarray}\label{iso1} W_3^0\otimes W_3^+\xrightarrow{\cdot E_3} H^0(F, 3K_F)\end{eqnarray} is an isomorphism, where
$W_3^0$ and $W_3^+$ defined as in the previous section. Moreover, let $W_2^0:=\cO_E^{h^0(2K_X)}\otimes k(x) \subset H^0(F, L_{2K, F})$, where $x=a(F)$. Then we also have the
natural isomorphism \begin{eqnarray}\label{iso2} W_2^0\otimes W_3^+\xrightarrow{\cdot E_2} H^0(F, 2K_F).\end{eqnarray}

We  summarize some important corollaries of Lemma \ref{bundle} and Lemma \ref{bundle1}, which will be used  in this section.

\begin{lemm}\label{free} Let $\sigma_F: F \to F_0$ be the contraction map to the minimal model of $F$. Suppose that $|mK_{F_0}|$ is base-point-free for $m=2$ or $3$, then $L_{m, F}+\cD_{ F}=m\sigma^*K_{F_0}$.  Moreover, $|L_{m, F}|$
and $|\cD_{F}|$ are base point free.

Furthermore, let $\Lambda_m={\sigma_F}_*(L_{m,F})$ and $\Delta={\sigma_F}_*( \cD_F)$. Then both are base point free and $L_{m,F}= \sigma_F^*( \Lambda_m)$, $\cD_F=\sigma_F^*( \Delta)$.
\end{lemm}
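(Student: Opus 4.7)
Plan: Restricting the refined uniform decomposition of Lemma~\ref{modification} to a general fiber $F$ yields $mK_F\sim L_{m,F}+\cD_F+E_{m,F}$; evaluating the vector-bundle isomorphism $\cE_{mK}^0\otimes\cE_{mK}^+\xrightarrow{\cdot E_{mK}} a_{X*}\cO_X(mK_X)$ at $a(F)\in E$ and factoring out the canonical section of $E_{m,F}$ produces a tensor isomorphism
\[
W_0\otimes W_+\xrightarrow{\sim}H^0(F,\,L_{m,F}+\cD_F),
\]
with $W_0\subset H^0(F,L_{m,F})$ and $W_+\subset H^0(F,\cD_F)$. Since $|L_{mK}|$ is base-point-free on $X$, its trace on $F$ is base-point-free, so $L_{m,F}$ is nef; likewise $\cD_F$ is nef as the restriction of the nef class $\cD_{mK,0}$.

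The heart of the argument is to show $E_{m,F}=mE_F$, where $K_F=\sigma_F^*K_{F_0}+E_F$. Set $B:=mE_F-E_{m,F}$. As $P$ varies over $\Pic^0(E)$, the sublinear systems $|mK_X+P||_F=|L_{m,F}|+\cD_{mK,P}|_F+E_{m,F}$ together span $H^0(F,mK_F)$, because the ample bundle $\cE_{mK}^+$ on $E$ is continuously globally generated; consequently the intersection of their fixed parts equals the fixed part of the complete $|mK_F|$, which under the hypothesis that $|mK_{F_0}|$ is base-point-free equals $mE_F$. Since each fixed part is $\cD_{mK,P}|_F+E_{m,F}$, this gives $E_{m,F}+\bigcap_P\cD_{mK,P}|_F=mE_F$, so $B\ge 0$ and is $\sigma_F$-exceptional. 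On the other hand $L_{m,F}+\cD_F=m\sigma_F^*K_{F_0}+B$ is nef, hence $B\cdot E'\ge 0$ for every $\sigma_F$-exceptional curve $E'$; writing $B=\sum_j b_jE_j'$ with $b_j\ge 0$ gives $B^2=\sum_j b_j(B\cdot E_j')\ge 0$, and negative-definiteness of the exceptional intersection form forces $B=0$. Consequently $L_{m,F}+\cD_F=m\sigma_F^*K_{F_0}$, and the identity $(L_{m,F}+\cD_F)\cdot E'=0$ combined with the individual nefness forces $L_{m,F}\cdot E'=\cD_F\cdot E'=0$ for every exceptional $E'$; so $L_{m,F}$ and $\cD_F$ descend via $\sigma_F$, namely $L_{m,F}=\sigma_F^*\Lambda_m$ and $\cD_F=\sigma_F^*\Delta$ with $\Lambda_m:=\sigma_{F*}L_{m,F}$, $\Delta:=\sigma_{F*}\cD_F$, and $\Lambda_m+\Delta\sim mK_{F_0}$.

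Finally, to get base-point-freeness, apply Lemma~\ref{decomp}(1) to the tensor isomorphism above with $L=L_{m,F}+\cD_F=m\sigma_F^*K_{F_0}$: since $|L|=\sigma_F^*|mK_{F_0}|$ is base-point-free, both $W_0$ and $W_+$ are free, whence the complete linear systems $|L_{m,F}|$ and $|\cD_F|$ (equivalently $|\Lambda_m|$ and $|\Delta|$ on $F_0$) are base-point-free. The step I expect to be most delicate is the effectivity $B\ge 0$: it hinges on the continuous global generation of $\cE_{mK}^+$ to guarantee that as $P$ ranges over $\Pic^0(E)$ the family $\cD_{mK,P}|_F$ sweeps out enough of $|\cD_F|$ for the fixed-part intersection computation to yield $mE_F$ on the nose. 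Once this is established, the remainder is a clean interplay between nefness, negative-definiteness of the exceptional intersection form, and Lemma~\ref{decomp}.
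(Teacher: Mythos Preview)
Your argument is correct and follows the paper's outline: first establish $L_{m,F}+\cD_F=m\sigma_F^*K_{F_0}$, then deduce base-point-freeness of the two factors via Lemma~\ref{decomp}(1), then use $m\sigma_F^*K_{F_0}\cdot E'=0$ together with nefness of each summand to show $L_{m,F}\cdot E'=\cD_F\cdot E'=0$ and hence descend both divisors to $F_0$. The paper asserts the first step tersely (``from the isomorphisms''), and your negative-definiteness computation is exactly the right way to justify it.

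However, the step you single out as most delicate---the effectivity $B:=mE_F-E_{m,F}\ge 0$---does not require continuous global generation or any variation of $P$; it is immediate from what you have already written down. From your tensor isomorphism $W_0\otimes W_+\xrightarrow{\sim}H^0(F,L_{m,F}+\cD_F)$, multiplication by the canonical section of $E_{m,F}$ gives an injection $H^0(F,L_{m,F}+\cD_F)\hookrightarrow H^0(F,mK_F)$ whose composite with the tensor map is already an isomorphism onto $H^0(F,mK_F)$; by dimension count this injection is itself an isomorphism. Hence $|mK_F|=|L_{m,F}+\cD_F|+E_{m,F}$, so $E_{m,F}$ is contained in the fixed divisor $mE_F$ of $|mK_F|$, and $B=mE_F-E_{m,F}$ is precisely the fixed part of $|L_{m,F}+\cD_F|$, in particular effective and $\sigma_F$-exceptional. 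After this, your nefness/negative-definiteness argument forcing $B=0$ is clean and correct.
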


\begin{proof}
Let $E:=K_{F/F_0}$.
Then $|mK_F|=\sigma^*|mK_{F_0}|+mE$. From the isomorphism (\ref{iso1}, \ref{iso2}), we conclude that $L_{m, F}+\cD_{ F}=m\sigma^*K_{F_0}$.  Moreover, $|L_{m, F}|$ and $|\cD_{F}|$ are base point free by Lemma \ref{decomp}.1. Hence $0\leq L_{m, F}\cdot E\leq m\sigma^*K_{F_0}\cdot E=0$ and $0\leq \cD_{F}\cdot E\leq m\sigma^*K_{F_0}\cdot E=0$. Then it is clear that $L_{m,F}= \sigma_F^*( \Lambda_m)$, $\cD_F=\sigma_F^*( \Delta)$.
\end{proof}

\begin{cor} If $p_g(F) >0$, then  $P_2(X)=h^0(F_0, \Lambda_2)> 1$.
\end{cor}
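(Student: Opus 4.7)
The plan is to verify both halves of the statement: the equality $P_2(X) = h^0(F_0, \Lambda_2)$ and the strict inequality $P_2(X) > 1$. For the equality, I would note that $h^0(F_0, \Lambda_2) = h^0(F, L_{2K, F})$ by Lemma \ref{free}, while the trivial summand $\cE_{2K}^0 = \cO_E^{\oplus P_2(X)}$ inside $a_{X*}\cO_X(L_{2K})$ gives, via base change at the general point $x = a_X(F)$, a subspace $W_2^0 \subset H^0(F, L_{2K, F})$ of dimension $P_2(X)$. This yields $P_2(X) \le h^0(F_0, \Lambda_2)$; the reverse inequality follows by applying Lemma \ref{decomp}(2) to the isomorphism \eqref{iso2}, which forces $W_2^0 = H^0(F, L_{2K, F})$ whenever $\dim W_2^0 = 1$ or $\dim W_3^+ = 1$.

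For $P_2(X) > 1$, I argue by contradiction: assume $P_2(X) = 1$ and split on the value of $p_g(F)$. When $p_g(F) \ge 2$, since $a_{X*}\omega_X = (Q^{-1})^{\oplus p_g(F)}$ there are linearly independent sections $\sigma_1, \sigma_2 \in H^0(X, K_X + Q)$. Because $a_{X*}\omega_X^2$ is ample, $h^0(X, 2K_X + P)$ is constant in $P \in \Pic^0(E)$, so $H^0(X, 2K_X + 2Q)$ is one-dimensional. Hence $\sigma_1^2$ and $\sigma_2^2$ are proportional, and the factorization $(\sigma_1 - c\sigma_2)(\sigma_1 + c\sigma_2) = 0$ in $H^0(X, 2K_X + 2Q)$ forces one factor to vanish identically on the irreducible variety $X$, making $\sigma_1, \sigma_2$ proportional, a contradiction.

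When $p_g(F) = 1$, Lemma \ref{decomp}(2) applied to \eqref{iso2} with $\dim W_2^0 = 1$ gives $h^0(F, L_{2K, F}) = 1$; combined with the base-point-freeness furnished by Lemma \ref{free}, this forces $L_{2K, F} = 0$, whence $\Lambda_2 = 0$ and $\Delta = 2K_{F_0}$. When $K_{F_0}^2 \ge 2$, Proposition \ref{bps}(2) makes $|3K_{F_0}|$ base point free, so Lemma \ref{free} for $m = 3$ produces $L_{3K, F} = \sigma_F^* K_{F_0}$ with $h^0(F, L_{3K, F}) = p_g(F_0) = 1$. Then $a_{X*}\cO_X(L_{3K})$ is a sheaf of generic rank $1$ containing the trivial subbundle $\cE_{3K}^0 = \cO_E^{\oplus P_3(X)}$, which forces $P_3(X) \le 1$, contradicting $P_3(X) > P_2(X) = 1$ from Lemma \ref{plurigenera}.

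The main obstacle is the residual case $K_{F_0}^2 = 1$, where $|3K_{F_0}|$ need not be base point free and the previous argument fails. I would rule it out by combining a divisibility consequence of the uniform decomposition with Debarre's inequality. Taking generic ranks in $a_{X*}\cO_X(kK_X) = \cO_E^{\oplus P_k(X)} \otimes \cE_{3K}^+$ (for $k = 2$ after twisting by $Q$ and for $k = 3$ directly) gives $P_k(F_0) = P_k(X) \cdot \rank(\cE_{3K}^+)$, so $P_2(F_0) \mid P_3(F_0)$ when $P_2(X) = 1$. Inserting $P_k(F_0) = \chi(\cO_{F_0}) + \binom{k}{2} K_{F_0}^2$ with $K_{F_0}^2 = 1$ forces $(1 + \chi(\cO_{F_0})) \mid (3 + \chi(\cO_{F_0}))$, whence $\chi(\cO_{F_0}) = 1$ and hence $q(F_0) = p_g(F_0) = 1$. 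Debarre's inequality $K_{F_0}^2 \ge 2 p_g(F_0)$ for irregular minimal surfaces of general type then forces $K_{F_0}^2 \ge 2$, contradicting $K_{F_0}^2 = 1$.
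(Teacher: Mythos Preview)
Your proof of $P_2(X)>1$ is correct, but the route differs substantially from the paper's. The paper gives a single uniform argument: assuming $P_2(X)=1$, one deduces (via Lemma~\ref{free} and Lemma~\ref{bundle1}(5)) that $\Lambda_2=0$, $\Delta=2K_{F_0}$, and $L_{3K,F}=K_F$, so that the isomorphism \eqref{iso1} becomes $W_3^0\otimes H^0(F,2K_F)\cong H^0(F,3K_F)$ with $W_3^0\subset H^0(F,K_F)$. Since the multiplication map $W\otimes H^0(2K_F)\to H^0(3K_F)$ cannot be injective for any subspace $W\subset H^0(K_F)$ with $\dim W\ge 2$ (take independent $s_1,s_2\in W$ and any nonzero $t\in H^0(K_F)$; then $s_1\otimes s_2t-s_2\otimes s_1t$ is a nonzero element mapping to zero), one gets $\dim W_3^0=1$ and hence $P_2(F)=P_3(F)$, a contradiction. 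This avoids any case distinction on $p_g(F)$ or $K_{F_0}^2$ and needs nothing beyond the structure already set up in the section.

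Your approach, by contrast, decomposes into three independent arguments: a neat squares-of-sections trick on $X$ for $p_g(F)\ge 2$; a direct rank computation for $p_g(F)=1$, $K_{F_0}^2\ge 2$ (using base-point-freeness of $|3K_{F_0}|$); and, for the residual case $p_g(F)=1$, $K_{F_0}^2=1$, the divisibility $P_2(F_0)\mid P_3(F_0)$ combined with Debarre's inequality $K^2\ge 2p_g$ for irregular minimal surfaces of general type. All three are valid. The first is pleasantly self-contained; the third is correct but imports an external numerical inequality that the paper's uniform argument does not need.

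One remark on the equality $P_2(X)=h^0(F_0,\Lambda_2)$: neither your argument nor the paper's actually establishes it in full generality. Lemma~\ref{decomp}(2) gives $W_2^0=H^0(F,L_{2K,F})$ only when one tensor factor is one-dimensional, so your reverse inequality is justified only in those cases. The paper simply records the inequality $h^0(F_0,\Lambda_2)\ge P_2(X)$ and then proves $P_2(X)>1$; the equality is never used afterward, so this is a harmless imprecision in the statement rather than a gap in either proof.
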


\begin{proof}
Note that $P_2(X)=\textrm{rk}(\cE_{M_2}^0)$ and $h^0(F_0, \Lambda_2) = h^0(F, L_{2,F}) \ge \textrm{rk}(\cE_{M_2}^0)$.
Hence it suffices to show that $\textrm{rk}(\cE_{M_2}^0) >1$.

Suppose on the contrary that $ \textrm{rk}(\cE_{M_2}^0) =\dim W_2^0 = 1$. Then by Corollary \ref{decomp}.2, one has $h^0(F, L_{2,F})=h^0(F_0, \Lambda_2)=1$.
Moreover, $|2 K_{F_0}|= \Lambda_2 +|\Delta|$. Since  $|2K_{F_0}|$ is base point free by Proposition \ref{bps}, $\Lambda_2=0$ and hence so is $L_{2,F}$.
Now Lemma \ref{bundle1}.5 gives $L_{3,F}=L_{2,F}+K_F = K_F$. Hence $W_3^0\subset H^0(F, L_{3K, F})\simeq H^0(F, K_F)$.
Also, $|2K_{F_0}|= |\Delta|$ and  $\cD_{ F}=\sigma^*(2K_{F_0})$ and hence
 $W_3^+=H^0(F, \cD_{3K, F})\simeq H^0(F_0, 2K_{F_0})$.
It follows that the isomorphism $$\Phi_W: W_3^0 \otimes W_3^+ \cong W_3^0 \otimes H^0(F,2K_F)  \cong H^0(F, 3K_F)$$ is compatible with $\Phi: H^0(K_F) \otimes H^0(2K_F) \to H^0(3K_F)$. For any subspace $W \subset H^0(K_F)$ of dimension $\ge 2$, it is obvious that the induced map
$\Phi|_{W \otimes H^0(2K_F)}: W \otimes H^0(2K_F) \to H^0(3K_F)$ can not be injective. Therefore, $\dim W_3^0=1$ and hence $h^0(2K_F)=h^0(3K_F)$, which is the desired contradiction.
\end{proof}

The rest of this section is devoted to the proof of the following theorem.

\begin{theo}\label{nonzero} Let $X$ be a special irregular $3$-fold. Then $|5K_X|$ induces a birational map of $X$ if any one of the following conditions holds
\begin{enumerate}
\item  $p_g(F)>0$;
\item $P_2(X) \ge 2$ and $K_{F_0}^2 \ge 3$.
\end{enumerate}
\end{theo}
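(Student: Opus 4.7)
The plan is to produce a base point free pencil $|L|$ contained in $|L_{2K}|$ and combine Tankeev's principle with the Reider--Langer birationality criterion (Theorem \ref{adjoint-bundle}) applied on a general member $S \in |L|$.

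In both hypotheses $P_2(X) \geq 2$: in case (2) by assumption, and in case (1) by the corollary just established. Hence $h^0(L_{2K}) = P_2(X) \geq 2$ and, after the birational modification of Lemma \ref{modification}, $|L_{2K}|$ is base point free on $X$. Choose a pencil $|L| \subseteq |L_{2K}|$; by Bertini (and a further blowup if necessary) a general member $S$ is smooth and irreducible. For two general points $x, y \in X$ lying in different members of $|L|$, a section of $|L|$ vanishing on the member through $x$ but not on the one through $y$, multiplied by a section of $|5K_X - L| \sim |3K_X + Q + \cD_0 + E_2|$ not vanishing at $y$, produces a section of $|5K_X|$ separating $x$ and $y$; the bigness of $3K_X + Q + \cD_0 + E_2$ together with the ampleness of $\cE_{3K}^+$ makes this step routine. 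Using Kawamata--Viehweg vanishing for $5K_X - L \sim K_X + (4K_X - L)$, the restriction map $H^0(X, 5K_X) \to H^0(S, 5K_X|_S)$ is surjective; hence it is enough to show that $|5K_X|$ restricted to a general $S \in |L|$ is birational on $S$.

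By adjunction,
$$5K_X|_S \sim K_S + D, \qquad D := (4K_X - L_{2K})|_S \equiv (2K_X + \cD_0 + E_2)|_S,$$
which is nef and big on $S$ (after passing to the nef part of a Zariski decomposition if necessary, which is harmless for Theorem \ref{adjoint-bundle}). The theorem then gives birationality once $D^2 > 8$ and no curve $C$ through two general points $x_1, x_2 \in S$ satisfies $(D \cdot C) \leq 4/(1 + \sqrt{1 - 8/D^2})$.

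The essential numerical input is the intersection $D^2 = (4K_X - L_{2K})^2 \cdot L_{2K}$, computed on $X$. The fiber-restriction formulas $L_{2K,F} = \sigma^* \Lambda_2$, $\cD_{0,F} = \sigma^* \Delta$, and $\Lambda_2 + \Delta = 2K_{F_0}$ from Lemma \ref{free}, combined with the Severi-type volume bounds of Proposition \ref{J} applied to the fibration $a_X$, deliver the required lower bound on $D^2$: in case (2) the hypothesis $K_{F_0}^2 \geq 3$ gives ample slack, while in case (1) the condition $p_g(F) > 0$ forces $\Lambda_2$ to carry most of the positivity of $2K_{F_0}$. Obstructing curves are excluded by generic position combined with the Severi-type lower bounds on $D \cdot C$ for horizontal $C$ and with the finiteness of the vertical locus. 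The main obstacle will be the borderline subcases of (1) in which $(K_{F_0}^2, p_g(F))$ is $(1,1)$, $(1,2)$, or $(2,3)$: here $\Lambda_2^2$ may be too small for Reider's inequality to apply directly, so a finer case-by-case analysis using Theorem \ref{surface} and the rigid tensor structure of $a_{X*}\cO_X(kK_X)$ from Lemma \ref{bundle} will be required to conclude.
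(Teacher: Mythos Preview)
Your overall architecture (pick a pencil in $|L_{2K}|$, use Tankeev's principle to reduce to a general $S$, and apply the Reider--Langer criterion on $S$) matches the paper's, but the way you implement the reduction and the numerical estimates has genuine gaps.

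First, the vanishing step. You assert that Kawamata--Viehweg applies to $5K_X-S\sim K_X+(4K_X-L_{2K})$, but $4K_X-L_{2K}\equiv 2K_X+\cD_0+E_2$ is not nef: neither $K_X$ nor the exceptional divisor $E_2$ is nef on the blown-up model. The paper fixes this by working not with $5K_X$ but with the smaller divisor $K_X+\lceil 2K_X+2\pi^*K_{X_0}-E_2\rceil\preceq 5K_X$, where $\pi:X\to X_0$ is the canonical model. Then $2K_X+2\pi^*K_{X_0}-E_2-S\equiv 2\pi^*K_{X_0}+\cD_Q$ is genuinely nef and big, so vanishing holds and the restriction to $S$ is $K_S+\lceil 2\pi^*K_{X_0}+\cD_Q\rceil_S$ with a \emph{nef} $\mathbb Q$-divisor. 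Your claim that one can pass to the nef part of a Zariski decomposition ``harmlessly'' is not correct here: sections of $K_S+\lceil P\rceil$ need not lift back to $5K_X$.

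Second, and more seriously, you are missing the device that actually produces the numbers. Once one has the nef $\mathbb Q$-divisor $2\pi^*K_{X_0,S}+\cD_{Q,S}$, the paper invokes Kawamata's extension theorem for $H^0(X,m(K_X+L_{2K}))\to H^0(S,mK_S)$ to obtain $3\pi^*K_{X_0,S}\succeq \sigma_S^*K_{S_0}+\cD_{Q,S}$, hence
\[
2\pi^*K_{X_0,S}+\cD_{Q,S}\ \succeq\ \tfrac{2}{3}\sigma_S^*K_{S_0}+\tfrac{5}{3}\cD_{Q,S}.
\]
All the self-intersection and curve-intersection estimates are then run against $\tfrac{2}{3}\sigma_S^*K_{S_0}+\tfrac{5}{3}\cD_{Q,S}$, which is computable. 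Proposition~\ref{J} is not the tool that drives these bounds; it only enters in Case~2 to show $\cD_{P,S}^2\ge 2$. Your appeal to ``Severi-type volume bounds of Proposition~\ref{J} applied to the fibration $a_X$'' does not give you control of $(4K_X-L_{2K})^2\cdot L_{2K}$ in the required form.

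Finally, the paper's case split is not according to the value of $(K_{F_0}^2,p_g(F))$ as you anticipate, but according to whether $\cD_P$ is a fiber of $a_X$ (i.e.\ $\rk\cE_{3K}^+=1$) or $\cD_{P,S}$ is itself big on $S$; the two cases require different intersection arguments (in Case~1 one computes $\sigma_S^*K_{S_0}\cdot\cD_{P,S}=2g(\Gamma)-2$ via the fiber $F$, in Case~2 one uses Hodge index on $S$). Your proposed ``borderline'' analysis along the $(1,1),(1,2),(2,3)$ lines does not arise.
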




\begin{proof}

 Let $\varphi: X\rightarrow \mathbb P^a$ be the morphism induced by $|L_{2K}|$ and let $S$ be an irreducible component of a general pencil of $|L_{2K}|$.
 It is clear that $S$ is a smooth of general type.  We first claim that the induced map $S \to E$ is surjective.
 To see the claim, suppose on the contrary that $ S \to E$ is not surjective, then $S=F$ and hence $L_{2K}=F$ (resp. $L_{3K}$). It follows that $ V^0(X, L_{2K})=\PE$. This is a contradiction since $V^0(X, L_{2K}) \ne \PE$ by Lemma \ref{integral}.

We will also use the following notation: let $V$ be a $\mathbb Q$-divisor on $X$ such that $S\nsubseteq \mathrm{Supp}(V)$,
then we denote by $V_S$ the restriction of $V$ on $S$.


We denote by $\pi: X\rightarrow X_0$ the morphism from $X$ to its canonical model $X_0$ and let $\sigma_S: S\rightarrow S_0$ be the contraction to its
minimal model. Then $\pi^*K_{X_0}$ is a $\mathbb Q$-Cartier nef and big divisor on $X$. We then consider the refined uniform decomposition
$|2K_X-Q|=|L_{2K}|+\cD_{2K,0}+E_2$ and the short exact sequence:
\begin{multline*}
0\rightarrow \cO_X(K_X+\lceil 2K_X+2\pi^*K_{X_0}-E_2\rceil-S)\rightarrow  \cO_X(K_X+\lceil 2K_X+2\pi^*K_{X_0}-E_2\rceil)\\
\rightarrow \cO_S\big((K_{X}+\lceil 2K_X+2\pi^*K_{X_0}-E_2\rceil)_S \big)\rightarrow 0.
\end{multline*}
Note that $2K_X-S-E_2\sim \cD_{2K, Q}=:\cD_Q$ is nef and $\pi^*K_{X_0}$ is nef and big. Thus, by Kawamata-Viehweg vanishing theorem, we have the surjective map
$$H^0(X, \cO_X(K_X+\lceil 2K_X+2\pi^*K_{X_0}-E_2\rceil))\rightarrow H^0(S, K_S+ \lceil2\pi^*K_{X_0} +\cD_Q\rceil_S).$$ We also know that $5K_X\succeq
K_X+\lceil 2K_X+2\pi^*K_{X_0}-E_2\rceil$ and $5K_X-L_{2K}$ are effective divisors. Hence in order to conclude the proof of Theorem \ref{nonzero}, we just need to show that under the respective assumptions, the global section of $K_S+ \lceil2\pi^*K_{X_0} +\cD_Q\rceil_S$ induces a
birational map of $S$.

By the extension theorem (see the main theorem of \cite{K}), we know that the restriction map $H^0(X, m(K_X+L_{2K}))\rightarrow H^0(S, mK_S)$  is
surjective for any $m\geq 2$. We may take $m$ sufficient large and divisible and conclude that $\mathrm{Mov}(|m(K_X+L_{2K})|)_S\succeq
m\sigma_S^*K_{S_0}$, where $\mathrm{Mov}(|m(K_X+L_{2K})|)$ denotes the moving part of the linear system $|m(K_X+L_{2K})|$.
 Since $\cD_P$ is nef and contains no base divisor for each $P\in \Pic^0(E)$, we have $\textrm{Mov}|m\cD_Q|=|m\cD_Q|$ for $m \ge 2$ and hence
 $$\mathrm{Mov}(|m(K_X+L_{2K})|)+m\cD_Q \preceq  \mathrm{Mov} (|m(K_X+L_{2K}+\cD_Q|) \preceq   3m\pi^*K_{X_0}.$$ Thus,
$$\sigma_S^*K_{S_0}+\cD_{Q, S}\preceq   \mathrm{Mov}(|m(K_X+L_{2K})|)_S   +\cD_{Q,S} \preceq  3\pi^*K_{X_0,S}$$ and hence \begin{eqnarray}\label{equal-KK}2\pi^*K_{X_0, S}+\cD_{Q, S} \succeq
\frac{2}{3}\sigma_S^*K_{S_0}+\frac{5}{3}\cD_{Q,S}.
\end{eqnarray}

We then apply Theorem \ref{adjoint-bundle} to show  that the global section of $K_S+ \lceil2\pi^*K_{X_0} +\cD_Q\rceil_S$ induces a birational map of $S$.
We distinguish two cases depending on whether $\cD_P$ is a fiber of $a_X$ or not for $P$ general.\\

\noindent
{\bf Case 1. $\cD_P$ is a general fiber of $a_X$.}\\
In this case, $F=\cD_P$ is a general
fiber of $a_X$ and hence $\cD_{P, F}=0$. We regard $\Gamma \colon=F\cap S$ as a general fiber of $S\rightarrow E$ or a general member of the linear system $|L_{2K, F}|$ on $F$.
Note that the morphism $S\rightarrow E$ factors through its minimal model $S_0$, hence $\Gamma \cdot K_S = \Gamma \cdot \sigma_S^* K_{S_0}$.
Thus considering $\Gamma$ as a curve on $S$ yields
\begin{equation}\label{curve1}
\sigma_S^*K_{S_0}\cdot \cD_{Q,S}=\sigma_S^*K_{S_0}\cdot \Gamma =K_S\cdot \Gamma=2g(\Gamma)-2.
\end{equation}
On the other hand, considering $\Gamma$ as a curve on $F$ leads to
$$2g(\Gamma)-2= (K_F+S|_F) \cdot S|_F.$$


\noindent{\bf Claim 1.1.} $\sigma_S^*K_{S_0} \cdot \cD_{P,S} \ge 6$ and hence  $(\frac{2}{3}\sigma_S^*K_{S_0}+\frac{5}{3}\cD_{P,S})^2 \ge \frac{40}{3}$. \\
 If $|2K_{F_0}|$  is base point free (which is the case when $p_g(F)\neq 0$ or $K_{F_0}^2\geq 5$ by \cite[Theorem 5.1 and Theorem 7.4]{BPV}), then
 $L_{2K, F}=2\sigma_F^*K_{F_0}$ by Lemma \ref{free}. It follows that
\begin{equation}\label{curve2}
  \sigma_S^*K_{S_0}\cdot \cD_{Q,S}=2g(\Gamma)-2=  (K_F+\sigma_F^* 2K_{F_0} ) \cdot \sigma_F^* 2K_{F_0}=6K_{F_0}^2 \ge 6
\end{equation}

 If $|2K_{F_0}|$ has base points (which could happen when $K_{F_0}^2=3$ or $4$), we note that still have $|2K_F|=|L_{2K}|_{ F}+E_{2, F}$. By the
 assumption that $K_{F_0}^2\geq 3$ and the main result of \cite{X2}, we know that $|L_{2K}|$ induces a generically finite morphism of $F$ and $h^0(F,
 L_{2K, F})=P_2(F)\geq 4$.  It is easy to see that $(L_{2K, F})^2\geq 2(P_2(F)-2) \ge 4$. Hence
 $K_F\cdot L_{2K, F}\geq \sigma_F^*K_{F_0}\cdot L_{2K, F}\geq  4$ by Hodge index theorem. 
 Thus
 $\sigma_S^*K_{S_0}\cdot \cD_{P,S}  =(K_F+S|_F) \cdot S|_F \geq 8$.

\noindent{\bf Claim 1.2.} $(\frac{2}{3}\sigma_S^*K_{S_0}+\frac{5}{3}\cD_{P,S}) \cdot C \ge 3$ for a  general curve $C$ on $S$. \\
Suppose first that $a(C)$ is a point, then $C=\Gamma$.
Therefore $\sigma_0^* K_{S_0}\cdot C\geq 6$ and  $(\frac{2}{3}\sigma_S^*K_{S_0}+\frac{5}{3}\cD_{P,S}) \cdot C \ge 4$.

If $a(C)$ is not a point, then $\cD_{P,S}\cdot C\geq 1$. The claim holds if $\sigma_S^* K_{S_0} \cdot C \ge 2$. If $\sigma_S^* K_{S_0} \cdot C =0$, one has a contradiction by Hodge Index Theorem. If $\sigma_S^* K_{S_0} \cdot C =1$, then Hodge Index Theorem implies that $C \equiv \sigma_S^* K_{S_0}$. Then Claim 1 gives  $\cD_{P,S}\cdot C\geq 6$ and hence $(\frac{2}{3}\sigma_S^*K_{S_0}+\frac{5}{3}\cD_{P,S}) \cdot C \ge \frac{32}{3}$.

Finally, we conclude by Theorem \ref{adjoint-bundle} that  the global section of $K_S+
\lceil2\pi^*K_{X_0} +\cD_P\rceil_{S}$ induces a birational map of $S$.\\


\noindent {\bf Case 2. $a_{X*}\cD_P$ has rank $>1$}. \\
We start with the following observation.\\
\noindent
{\bf Claim 2.1.} $\cD_{P,S}$ is nef and big. Moreover, $\cD_{P,S}^2 \ge 2$.
  \begin{proof}
 We already know that $\cD_P$ is nef. It suffices to see that $\cD_{P,S}$ is big. First note that $\cD_P\equiv \cD_Q$ for $P, Q\in\Pic^0(X)$ and $S\rightarrow E$ is a surjective morphism. Hence the Iitaka model of $\cD_P$ dominates $E$ (and so does $\cD_{P,S}$).

   Suppose that $\cD_{P,S}$ is not big, then it is the pull-back on $S$ of an effective divisor of $E$. We may write $\cD_{P,S}=a^*N$ for some ample divisor $N$. Let $C$ be connected component of $S \cap F$. One has that $g(C) \ge 2$ since $S$ is of general type and the restriction $\cD_{P,C}=0$.

We then consider the short exact sequence  $$0\rightarrow \mathcal O_X(\cD_P-S)\rightarrow \mathcal O_X(\cD_P)\rightarrow \mathcal O_S(\cD_{P,S})\rightarrow 0.$$
It follows from $\textrm{rk} (a_*\cD_P) >1$ that $a_{X*}\mathcal O_X(\cD_P-S) \ne 0$ and hence $\cD_{P,F} \ge S|_F$. Further restriction to $C$ yields $0 =\cD_{P,C} \ge S|_C$. However, since $2K_X=\cD_{P}+S+E_2$, $2K_X+2S = \cD_P+3S+E-2$. Its restriction to $S$ and further to  $C$ leads to
$$0<(2K_S)|_C= \cD_{P,C}+3S|_C + E_2|_C = 0,$$ which is a contradiction.

  We now prove that $\cD_{P,S}^2 \ge 2$. If $S$ is of maximal Albanese dimension, we know by \cite[v5, Theorem 6.7]{BPS} that $\vol(S, \cD_{P,S})=\cD_{P,S}^2\geq\frac{2r}{2r-1}h^0(S, \cD_{P,S}+Q)$, where $Q\in\Pic^0(S)$
  general and $r$ is the least positive integer such that $rK_S\succeq V$. Hence $\cD_{P,S}^2\geq 2$.

  If $a_S: S\rightarrow  A_S$ maps onto a curve. Then we may apply Proposition \ref{J} to conclude that $\cD_{P,S}^2\geq 2$.
  \end{proof}

\noindent{\bf Claim 2.2.} $(\frac{2}{3}\sigma_S^*K_{S_0}+\frac{5}{3}\cD_{P,S})^2 >10$.\\
 By Claim 1,  $(\cD_{P,S})^2\geq 2$. Hence by Hodge index theorem, $\sigma_S^*K_{S_0}\cdot \cD_{P,S}\geq 2$ and hence
$$(\frac{2}{3}\sigma_S^*K_{S_0}+\frac{5}{3}\cD_{P,S})^2\geq \frac{94}{9}>10.$$

\noindent{\bf Claim 2.3.}  $(\frac{2}{3}\sigma_S^*K_{S_0}+\frac{5}{3}\cD_{P,S}) \cdot C \ge 3$ for a  general curve $C$ on $S$. \\
Let $C$ be a general curve on $S$.
We know that $\cD_{P,S}\cdot C\geq 1$ since $\cD_{P,S}$ is big and $C$ is general. If $\sigma_S^* K_{S_0} \cdot C \ge 2$ then the Claim follows.
If $\sigma_S^* K_{S_0} \cdot C =0$, one has a contradiction by Hodge Index Theorem. If $\sigma_S^* K_{S_0} \cdot C =1$, then Hodge Index Theorem implies that $C \equiv \sigma_S^* K_{S_0}$. Hence
$$(\frac{2}{3}\sigma_S^*K_{S_0}+\frac{5}{3}\cD_{P,S}) \cdot C =(\frac{2}{3}\sigma_S^*K_{S_0}+\frac{5}{3}\cD_{P,S}) \cdot \sigma_S^* K_{S_0} \ge 4.$$

 With these Claims, we thus then conclude that  $K_S+\lceil 2\pi^*K_{X_0}+\cD_{P}\rceil_{S}$ induce a
birational map of $S$.
\end{proof}

 \section{Irregular threefolds with $p_g(F)=0$}

 We always assume that $X$ is special. Then by Lemma \ref{integral}, for $P\in \Pic^0(X)$ general, we have $$|3K_X+P|=|L_{3K}|+\cD_{3K, P}+E_3,$$ and
 $$|2K_X+P|=|L_{2K}|+\cD_{2K, P}+E_2.$$
 For simplicity of notations, we write $\cD_P:=\cD_{3K, P}$ and $T_P:=\cD_{2K, P}$.
by Lemma  \ref{modification},  we may and will assume that $|L_{3K}|$ and $|L_{2K}|$ are base point free, $\cD_P$ and $T_P$ are smooth nef
divisors for $P$ general.

\noindent {\bf Warning.}
 Note that a difference from last section is that there is no reason here  that $P_2(X)>1$ and hence $L_{2K}$ could be $\cO_X$. Moreover,  there is also
 no reason that $\cD_P$ and $T_P$ are  algebraically equivalent.

 Since $p_g(F)=0$, it follows that $q(F)=0$ and $\chi(\cO_F)=1$.
  Hence  $1\leq K_{F_0}^2\leq 9$ by Bogomolov-Miyaoka-Yau inequality.
It is then leads to   the following table
 \begin{center}
    \begin{tabular}{| l | l | l | l | l | l | l | l | l | l |}
    \hline
    $K_{F_0}^2$ & 1&2&3&4&5&6&7&8&9 \\ \hline
    $P_2(F_0)=1+K_{F_0}^2$& 2&3&4&5&6&7&8&9&10\\
     $P_3(F_0)=1+3K_{F_0}^2$&4&7&10&13&16&19&22&25&28\\
      \hline
    \end{tabular}
\end{center}

We know by Theorem \ref{surface} that $|3K_{F_0}|$ induces a birational map of $F_0$ for all surfaces $F_0$ of general type with geometric genus $0$.

Since $p_g(F)=q(F)=0$, it follows that $a_* \omega_{X}=R^1a_*\omega_{X} =0$. Moreover, $R^2a_*\omega_{X}= \omega_E$ by \cite[Prop. 7.6]{Ko}. It follows that $\chi(\mathcal{O}_X)=-\chi(\omega_X)=0$ in this section.

\subsection{$\cD_{P}$ is a general fiber of $a_X$}
We will need following lemmas.

\begin{lemm}\label{prepare1}
If $\cD_{P}$ is a general fiber of $a_X$, then $L_{3K}$ is big and nef. Thus, let $S\in |L_{3K}|$ be a general member, we have $q(S)=1$ and $S$ dominates $E$.
\end{lemm}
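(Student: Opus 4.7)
The plan is to first extract fiber information about $L_{3K}$, then upgrade to bigness on $X$, and finally deduce the claims about $S$ via Kawamata--Viehweg vanishing.

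First, by Lemma \ref{modification} applied to the refined uniform decomposition $|3K_X+P|=|L_{3K}|+\cD_{3K,P}+E_3$, we may assume $|L_{3K}|$ is base point free, so $L_{3K}$ is nef. Since $\cD_P$ is a general fiber of $a_X$, we have $\cD_{3K,P}|_F=0$ for a general fiber $F$. Corollary \ref{product}(4), applicable because $\textrm{rk}(\cE_{3K}^+)=1$, implies that the restriction $H^0(X,3K_X)\to H^0(F,3K_F)$ is surjective. Using Lemma \ref{free} with $|3K_{F_0}|$ base point free (Proposition \ref{bps}(2) for $K_{F_0}^2\ge 2$; for $K_{F_0}^2=1$ one uses the variant on Catanese's result cited in the remark after Lemma \ref{non-hyperelliptic}), we obtain $L_{3K,F}=3\sigma_F^*K_{F_0}$, which is base point free and big on $F$ with $(L_{3K,F})^2=9K_{F_0}^2\ge 9$. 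In particular $L_{3K}^2\cdot F>0$.

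For the bigness of $L_{3K}$ on $X$, my idea is to apply Proposition \ref{J}(1) to the auxiliary divisor $D:=L_{3K}+\cD_0$. Its restriction to $F$ equals $L_{3K,F}$, which is base point free, and $a_{X*}\cO_X(D)=a_{X*}\cO_X(L_{3K})\otimes\cO_E(p_0)=\cO_E(p_0)^{\oplus P_3(X)}$ is polystable and ample. Proposition \ref{J}(1) gives $D^3\ge \tfrac{3}{P_3(X)}\cdot 9K_{F_0}^2\cdot P_3(X)=27K_{F_0}^2$. Expanding $D^3=L_{3K}^3+3L_{3K}^2\cdot F=L_{3K}^3+27K_{F_0}^2$ using $F^2=0$ yields only $L_{3K}^3\ge 0$. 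To upgrade this to the strict inequality $L_{3K}^3>0$, I would argue by contradiction: if $L_{3K}$ were not big then $\kappa(L_{3K})=2$, since the lower bound $\kappa(L_{3K})\ge 2$ follows from the fact that $\phi_{|L_{3K}|}|_F=\phi_{|3K_{F_0}|}$ is birational onto its image by Theorem \ref{surface} in the case $p_g(F_0)=0$. Then the combined morphism $(a_X,\phi_{|L_{3K}|}):X\to E\times Y$, with $Y$ the Iitaka image, would be generically finite and dominant, forcing $X$ to be birational to a finite cover of the product $E\times Y$. Comparing the resulting decomposition of $a_{X*}\cO_X(3K_X)$, which must split along summands pulled back via the cover (yielding summands of different slopes on $E$), with its known single-slope form $\cO_E(p)^{\oplus P_3(X)}$, then produces the desired contradiction.

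Once $L_{3K}$ is big and nef, the rest follows easily. A general $S\in|L_{3K}|$ is smooth and irreducible by Bertini. Because $L_{3K,F}$ is nonzero and moves on $F$, a general $S$ meets the generic fiber $F$ nontrivially, so $S\to E$ is surjective and hence $q(S)\ge g(E)=1$. For the reverse inequality, take the long exact cohomology of $0\to\cO_X(-L_{3K})\to\cO_X\to\cO_S\to 0$: Serre duality gives $h^2(X,-L_{3K})=h^1(X,K_X+L_{3K})$, which vanishes by Kawamata--Viehweg vanishing applied to the nef and big divisor $L_{3K}$. The long exact sequence then yields $q(S)\le q(X)=1$, so $q(S)=1$. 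The main obstacle is promoting $L_{3K}^3\ge 0$ to $L_{3K}^3>0$: Proposition \ref{J}(1) alone does not suffice, and one must analyze the Iitaka fibration of $L_{3K}$ together with the ampleness of $a_{X*}\cO_X(3K_X)$ to rule out the Iitaka-dimension-two scenario.
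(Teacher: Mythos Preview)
Your overall strategy matches the paper's: show $|L_{3K}|_F$ is birational via Corollary~\ref{product}(4) and Theorem~\ref{surface}, then rule out $\kappa(L_{3K})=2$ by considering the product map $(a_X,\phi_{|L_{3K}|})$, and finally deduce $q(S)=1$ from Kawamata--Viehweg vanishing. However, the contradiction step is where your argument breaks down.

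The detour through Proposition~\ref{J}(1) is unnecessary and, as you note, only yields $L_{3K}^3\ge 0$. More importantly, your proposed contradiction---that $a_{X*}\cO_X(3K_X)$ would acquire summands of different slopes if $X$ were a finite cover of $E\times Y$---is unjustified: there is no general mechanism producing such a splitting, and you give no argument for it. The actual contradiction is much simpler and is already implicit in what you wrote. You observe that $\phi_{|L_{3K}|}|_F$ is \emph{birational} onto its image $Y$. This means that for each general $t\in E$, the fiber $F_t$ maps birationally to $\{t\}\times Y$; hence $(a_X,\phi_{|L_{3K}|}):X\to E\times Y$ has generic degree~$1$, i.e.\ it is birational, not merely generically finite. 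But then $X$ is birational to $E\times Y$, so $\kappa(X)=\kappa(E)+\kappa(Y)\le 0+2=2$, contradicting the hypothesis that $X$ is of general type. This is exactly the paper's argument; you had all the ingredients but stopped short of the immediate conclusion and instead reached for an unsubstantiated sheaf-theoretic claim.

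One minor point: Lemma~\ref{free} as stated requires $|3K_{F_0}|$ to be base point free, which fails when $K_{F_0}^2=1$ and $p_g(F_0)=0$. The paper avoids this by working directly with the surjection $H^0(X,3K_X)\twoheadrightarrow H^0(F,3K_F)$ from Corollary~\ref{product}(4), which gives $|L_{3K}|_F+E_{3,F}=|3K_F|$ without needing base-point-freeness on $F_0$; this suffices since $|3K_F|$ is birational in all cases with $p_g(F_0)=0$.
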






\begin{proof}
Suppose that $\cD_P$ is a general fiber of $a_X$. Then one has that $\cV_{3K}^+ = \cE_{3K}^+$ is a line bundle of degree $1$.
By Corollary \ref{product}.4., we conclude that
\begin{equation}\label{system}
|L_{3K}|_{F} {+E_{3,F}} {\cong}  |3K_X|_F \simeq |3K_F| \cong |\sigma_F^* 3 K_{F_0}|+3K_{F/F_0}.
\end{equation}

 As $|3K_F|$ always induces a birational map of $F$, we see that $|L_{3K}|_F$ induces a birational morphism of $F$ and the Iitaka dimension of $L_{3K}$ is at least $2$.

If $L_{3K}$ is not big, then $|L_{3K}|$ induces a morphism $\phi\colon X\to \Sigma$ such that the restriction $\phi|_F\colon F\to \Sigma$ is birational for every general fiber $F$ of $a_X$.
This implies that $a_X\times \phi\colon X\to E\times \Sigma$ is birational, which is absurd. Therefore, $L_{3K}$ is nef and big.

The last assertion holds because $H^1(X, \mathcal{O}_X(-S))=H^2(X, \mathcal{O}_X(-S))=0$ due to Kawamata-Viehweg vanishing.
Since $S$ is big, then it is clear that $S$ dominates $E$.
\end{proof}

Under the assumption of the previous lemma, let $\Gamma$ be a general fiber of the morphism $S\rightarrow E$. Then as $|L_{3K}|_F\simeq |3K_F|$, $\Gamma$ is a general member of $\sigma_F^*|3K_{F_0}|$. Hence by Lemma \ref{non-hyperelliptic} and the remark after it, $\Gamma$ is not hyperelliptic.
\begin{lemm}\label{c2=0}
Let $S \in |L_{3K}|$ be a general member and $C$ be a very general curve on $S$ dominating $E$. Then  $C \cdot \Gamma = \deg( a|_C: C \to E) \ge 3$.
\end{lemm}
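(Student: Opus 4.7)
The equality $C \cdot \Gamma = \deg(a|_C : C \to E)$ is immediate since $\Gamma$ is a general fiber of the surjective morphism $a|_S : S \to E$ given by Lemma~\ref{prepare1}, so $\Gamma^2 = 0$. It therefore suffices to exclude $\deg(a|_C) \in \{1,2\}$, and the plan is to show that curves on $S$ dominating $E$ of degree $\le 2$ form a bounded (at most countable-dimensional) family in the Hilbert scheme of curves on $S$, to which a very general $C$ cannot belong. Suppose first $d := \deg(a|_C) = 1$. Then (the normalization of) $C$ is a section of $a|_S$. Sections are classified by the $E$-scheme $\Hom_E(E,S)$, a countable disjoint union of quasi-projective finite-type schemes; hence they form a countable union of proper subvarieties in the Hilbert scheme of curves on $S$, avoided by very general $C$.

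Suppose next $d = 2$, so $C$ is a bi-section of $a|_S$. The restriction of $C$ to the generic fiber $S_\eta$ of $a|_S$ is an effective divisor of degree $2$ on the smooth projective curve $S_\eta$ over $k(E)$. The geometric generic fiber of $a|_S$ is (isomorphic to) the general fiber $\Gamma$, which was shown just above to be non-hyperelliptic. In particular, any two distinct effective degree-$2$ divisors on $\Gamma$ cannot be linearly equivalent (otherwise they would cut out a $g^1_2$ and force $\Gamma$ to be hyperelliptic), so the Abel--Jacobi map
$$\Sym^2(S_\eta) \hookrightarrow \Pic^2(S_\eta)$$
is a closed immersion on the generic fiber. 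Consequently, the assignment $C \mapsto [C|_{S_\eta}]$ injects bi-sections of $a|_S$ into $\Pic^2(S_\eta)(k(E))$, a torsor over the abelian variety $\Pic^0(S_\eta)$; by the Lang--N\'eron theorem this set is finitely generated modulo the $k$-trace part. Therefore bi-sections form at most a bounded family of curves on $S$, which a very general $C$ likewise avoids.

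Combining both cases, $\deg(a|_C) \ge 3$, giving $C \cdot \Gamma \ge 3$. The main delicate step is the bi-section case: one must exploit the non-hyperellipticity of $\Gamma$ to make the relative Abel--Jacobi map a fibrewise closed embedding, and then invoke finiteness of $k(E)$-points of $\Pic^0(S_\eta)$ modulo trace to bound the family of bi-sections (observing that even in the isotrivial case, the contribution from the trace part is bounded in dimension by $g(\Gamma)$, hence still a proper subfamily of the full space of curves on $S$).
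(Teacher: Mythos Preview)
Your proof has a gap rooted in a misreading of ``very general curve.'' In this context (cf.\ the Reider-type application in Proposition~\ref{=1}), a very general curve $C$ on $S$ is one passing through a very general point, hence moving in an algebraic family $\{C_t\}$ that covers an open subset of $S$; the paper's own proof begins with exactly this observation. The question is therefore not whether sections or bi-sections form a bounded or countable-dimensional locus inside the Hilbert scheme, but whether a \emph{covering} family can consist entirely of curves with $C\cdot\Gamma\le 2$. Your Lang--N\'eron argument bounds bi-sections by a finitely generated group modulo the $k(E)/k$-trace of $J(S_\eta)$, but you then allow this trace to be up to $g(\Gamma)$-dimensional. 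A one-parameter subfamily of bi-sections arising from the trace could perfectly well cover the surface $S$, so your assertion that this is ``still a proper subfamily of the full space of curves on $S$'' does not settle the matter; the same objection applies, mutatis mutandis, to your treatment of sections.

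The missing input is $q(S)=1$ from Lemma~\ref{prepare1}. Since $\Pic^0(E)\to\Pic^0(S)$ is then an isogeny and $\Gamma$ maps to a point of $E$, the restriction $\Pic^0(S)\to\Pic^0(\Gamma)$ is trivial. Hence for any covering family $\{C_t\}$ the classes $\cO_S(C_{t_1}-C_{t_2})\in\Pic^0(S)$ restrict to $\cO_\Gamma$, so the divisors $C_t|_\Gamma$ are mutually \emph{linearly} equivalent and span a genuine pencil of degree $C\cdot\Gamma$ on $\Gamma$; non-hyperellipticity of $\Gamma$ then forces $C\cdot\Gamma\ge 3$ immediately. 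In the language of your approach, $q(S)=1$ is exactly what forces the $k(E)/k$-trace of $J(S_\eta)$ to be trivial, after which Lang--N\'eron would indeed leave only countably many bi-sections and your argument would close; but you never invoke $q(S)=1$, and without it the argument does not conclude.
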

\begin{proof}[Proof of Lemma \ref{c2=0}]
As $C$ is very general, there exist an algebraic deformation $\{C_t\}$ of $C$ such that $\cup_t C_t$ covers an open subset of $S$.
Since $S$ dominate $E$ and $q(S)=1$ by Lemma \ref{prepare1}, the morphism $\Pic^0(E)\rightarrow \Pic^0(S)$ is an isogeny and it follows that $\Pic^0(S)\rightarrow \Pic^0(\Gamma)$ is trivial.
Therefore, for general $t_1, t_2$, $\cO_S(C_{t_1}-C_{t_2}) \in \Pic^0(S)$ induces $\cO_\Gamma (C_{t_1}- C_{t_2})\cong \cO_\Gamma$. This implies in particular that $C_{t_1, \Gamma} \sim C_{t_2, \Gamma}$, and moreover there exist a rational map $\Psi: \Gamma \dashrightarrow \mathbb{P}^1$ with $C_{t_i, \Gamma}$ being fibers. Since $\Gamma$ is non-hyperelliptic by Lemma \ref{non-hyperelliptic}, one sees that $C \cdot \Gamma= \deg(\Psi|_\Gamma: \Gamma \to \mathbb{P}^1) \ge 3$.
\end{proof}

\begin{prop}\label{>1}
Assume that $p_g(F)=0$, $K_{F_0}^2 \ge 2$ and $\cD_P$ is a general fiber of $a_X$ for $P$ general, then $|5K_X|$ induces a birational map.
 \end{prop}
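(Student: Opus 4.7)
The plan is to restrict to a general $S \in |L_{3K}|$ and apply Reider's theorem (Theorem \ref{adjoint-bundle}) via Tankeev's principle. By Lemma \ref{prepare1}, a general $S$ is smooth and irreducible with $q(S) = 1$, and $a_X|_S \colon S \to E$ is surjective. Since $\cD_P$ is a general fiber of $a_X$, we have $\cD_F = 0$; since $|3K_{F_0}|$ is base-point-free under the hypothesis $K_{F_0}^2 \geq 2$ (Proposition \ref{bps}), Corollary \ref{product}(4) together with Lemma \ref{free} gives $L_{3K,F} \sim 3\sigma_F^* K_{F_0}$, so the general fiber $\Gamma = S \cap F$ of $S \to E$ satisfies $\Gamma \sim 3\sigma_F^* K_{F_0}$. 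This yields $2g(\Gamma) - 2 = 12 K_{F_0}^2 \geq 24$, and by Lemma \ref{non-hyperelliptic}, $\Gamma$ is non-hyperelliptic.

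Next, I would lift sections from $S$ to $X$. Consider the $\mathbb{Q}$-divisor $M' := 3K_X + \pi^* K_{X_0} - E_3$. Using $3K_X \sim L_{3K} + \cD_0 + E_3$, we have $M' - S \sim \pi^* K_{X_0} + \cD_0$, which is nef and big. Kawamata--Viehweg vanishing then yields the surjection
\[
H^0(X, K_X + \lceil M' \rceil) \twoheadrightarrow H^0\bigl(S, K_S + \lceil \pi^* K_{X_0,S} + \cD_{P,S} \rceil\bigr),
\]
and $\pi^* K_{X_0} \preceq K_X$ gives $K_X + \lceil M' \rceil \preceq 5K_X$. Combined with the fact that $|L_{3K}| \subset |5K_X|$ already separates different general members of the pencil, Tankeev's principle reduces the problem to showing that $|K_S + \lceil \pi^* K_{X_0,S} + \cD_{P,S} \rceil|$ separates two very general points on $S$.

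To achieve this, I would invoke Kawamata's extension theorem \cite{K}, which yields $\mathrm{Mov}|m(K_X + L_{3K})|_S \succeq m\sigma_S^* K_{S_0}$. Combined with $K_X + L_{3K} + \cD_P \preceq 4K_X$ and $\mathrm{Mov}|4mK_X|_S \preceq 4m\pi^* K_{X_0,S}$, this produces the estimate $\sigma_S^* K_{S_0} + \cD_{P,S} \preceq 4\pi^* K_{X_0,S}$. Hence $\pi^* K_{X_0,S} + \cD_{P,S} \succeq D' := \tfrac{1}{4} \sigma_S^* K_{S_0} + \tfrac{5}{4} \cD_{P,S}$, and using the intersection data $(\sigma_S^* K_{S_0})^2 = K_{S_0}^2 \geq 1$, $\sigma_S^* K_{S_0} \cdot \cD_{P,S} = K_S \cdot \Gamma \geq 24$, and $\cD_{P,S}^2 = \Gamma^2 = 0$, one computes $D'^2 \geq \tfrac{1}{16} + 15 > 8$, so that Reider's threshold $\tfrac{4}{1 + \sqrt{1 - 8/D'^2}}$ is strictly less than $3$.

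To conclude, suppose $|K_S + \lceil D' \rceil|$ fails to separate very general $x_1, x_2 \in S$. Theorem \ref{adjoint-bundle} would produce a curve $C$ through them with $D' \cdot C < 3$. For generic $(x_1, x_2)$ the two points lie in distinct fibers of $S \to E$, so $C$ dominates $E$; varying the pair forces $C$ to move in a positive-dimensional family. Applying Lemma \ref{c2=0} then yields $C \cdot \Gamma \geq 3$, whence $D' \cdot C \geq \tfrac{5}{4} \Gamma \cdot C \geq \tfrac{15}{4} > 3$, a contradiction. The main obstacle will be balancing two tight constraints: keeping the lifted divisor bounded by $5K_X$ (which forces using $\pi^* K_{X_0}$ with coefficient only $1$, rather than $2$ as in the $p_g(F) > 0$ case), while retaining enough positivity of $D'$ to overcome the degeneracy $\cD_{P,S}^2 = 0$ through the strong lower bound on $\sigma_S^* K_{S_0} \cdot \cD_{P,S}$ provided by the non-hyperellipticity of $\Gamma$.
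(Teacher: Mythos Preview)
Your proposal is correct and follows essentially the same approach as the paper: restrict to a general $S\in|L_{3K}|$, use Kawamata's extension theorem to get $\pi^*K_{X_0,S}+\cD_{P,S}\succeq \cM:=\tfrac14\sigma_S^*K_{S_0}+\tfrac54\cD_{P,S}$, compute $\cM^2>8$ via $\sigma_S^*K_{S_0}\cdot\cD_{P,S}=12K_{F_0}^2\ge 24$, and apply Theorem~\ref{adjoint-bundle}. The only minor difference is in bounding $\cM\cdot C$: the paper splits into the cases $a(C)$ a point versus $a(C)=E$ and in the latter uses $g(C)\ge 2$ together with Hodge index to get $\cM\cdot C\ge 3$, whereas you observe that generic pairs lie in distinct fibers and invoke Lemma~\ref{c2=0} to obtain the stronger bound $\cD_{P,S}\cdot C\ge 3$; the paper then appends a redundant paragraph separating points in the same Albanese fiber via $|3K_F|$, which your genericity observation makes unnecessary.
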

 \begin{proof}
 The proof is similar to the first case of Theorem \ref{nonzero}. We assume that $|L_{3K}|$ is base point free and pick $S$ a general member of
 $|L_{3K}|$ and consider the short exact sequence
 \begin{multline*}
0\rightarrow \cO_X(K_X+\lceil 3K_X+\pi^*K_{X_0}-E_3\rceil-S)\rightarrow  \cO_X(K_X+\lceil 3K_X+\pi^*K_{X_0}-E_3\rceil)\\
\rightarrow \cO_S\big((K_X+\lceil 3K_X+\pi^*K_{X_0}-E_3\rceil)_{S} \big)\rightarrow 0.
\end{multline*}
As before, we need to show that $|K_S+\lceil \pi^*K_{X_0}+\cD_{P}\rceil|_{S}$ defines a birational map of $S$. Moreover, we again apply the extension
theorem \cite{K} to conclude that $$\pi^*K_{X_0,S} \succeq  \frac{1}{4}(\sigma_S^*K_{S_0}+\cD_{P,S})$$ and hence $$\pi^*K_{X_0, S}+\cD_{P,S}\succeq
\frac{1}{4}(\sigma_S^*K_{S_0}+5\cD_{P,S})= \colon \cM.$$


If $K_{F_0}^2\geq 2$, we know that $|3K_{F_0}|$ is base point free (\cite[Theorem 5.1]{BPV}). Then $L_{3K, F}=3\sigma_F^*K_{F_0}$. As $\cD_P$ is a general
fiber of $a_X$, denoted $F$,  and $\Gamma= \cD_P\cap S$.
By the similar computation as in Equation \ref{curve1}, \ref{curve2}, we have
$$ \left\{ \begin{array}{l}
\sigma^*K_{S_0}\cdot \cD_{P,S}=2g(\Gamma)-2=12K_{F_0}^2 \ge 24; \\
\cM^2\geq \frac{1}{16}(K_{S_0}^2+10\sigma^*K_{S_0}\cdot\cD_P)>15
\end{array} \right.
$$

Next we verify that   $\cM \cdot C \ge 3$ for any general curve $C$.
Let $C$ be a general  curve on $S$. If $a(C)$ is a point, then $C=\cD_{P,S}$ and hence $$\cM \cdot C \ge \frac{1}{4}\sigma^*K_{S_0}\cdot \cD_{P,S} \ge 6.$$
If $a(C)$ is not a point, then the induced map $a|_C: C \to E$ is of degree $\cD_{P,S}\cdot C\geq 2$ because $g(C)\geq 2$.
We next verify that $\sigma_S^*K_{S_0}\cdot C \ge 2$ to achieve $\cM \cdot C\geq 3$.

If $C^2=0$, then $\sigma_S^*K_{S_0}\cdot C = 2g-2 \geq 2$.
If $C^2=1$, then $\sigma_S^*K_{S_0}\cdot C \ge 2$ unless $\sigma_S^*K_{S_0} \equiv C$ by Hodge Index Theorem.
In this situation, we have $\cM \cdot C = \cM \cdot \sigma_S^* K_{S_0} >30$.
If $C^2 \ge 2$, then by Hodge Index Theorem, $\sigma_S^*K_{S_0}\cdot C\geq 2$.

Therefore,$\cM \cdot C\geq 3$.
Hence we conclude  by Theorem
\ref{adjoint-bundle} that the global sections of $K_X+\lceil 3K_X+\pi^*K_{X_0}-E_3\rceil$ separates two general points which are in different fibers of
$a_X$.

On the other hand, we know that in this case $|3K_X+P|=|L_{3K}|+\cD_P+E_3$, where $|L_{3K}|_{F}\simeq |3K_F|$. Hence
$|5K_X|_{F} \supset |3K_F|+V$ for some fixed divisor $V$ and $|3K_F|$ induces a birational map of $F$. Hence $|5K_X|$ induces a birational map of $X$,
when $K_{F_0}^2\geq 2$.
\end{proof}

\begin{cor}
Assume that $K_{F_0}^2=2, 4, 6$, then $\cD_{P}$ is a general fiber of $a_X$. Hence $|5K_X|$ induces a birational map of $X$.
\end{cor}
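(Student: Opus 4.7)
My plan is to extract the rank identity from Lemma~\ref{bundle} and combine it with an elementary primality observation, so that the whole statement reduces to applying Proposition~\ref{>1}.

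Concretely, from the uniform decomposition $(\ddagger_3)$, one has
$$a_{X*}\cO_X(3K_X)\;\cong\;\cO_E^{\oplus P_3(X)}\otimes \cE_{3K}^{+},$$
and taking ranks of both sides via Lemma~\ref{bundle} yields
$$P_3(X)\cdot \rk(\cE_{3K}^{+})\;=\;h^0(F,3K_F)\;=\;P_3(F_0)\;=\;1+3K_{F_0}^2,$$
where the last equality uses $p_g(F)=q(F)=0$ together with Riemann--Roch for the minimal surface $F_0$.

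Next I would plug in $K_{F_0}^2\in\{2,4,6\}$, for which the right-hand side equals $7$, $13$, or $19$, all of which are prime (this is the arithmetic coincidence that the argument rests on). On the other hand, Lemma~\ref{plurigenera} gives $P_3(X) > P_2(X) \geq 1$, so $P_3(X)\geq 2$. Since the product $P_3(X)\cdot \rk(\cE_{3K}^{+})$ is prime and the first factor is at least $2$, we are forced into $\rk(\cE_{3K}^{+})=1$ (and, incidentally, $P_3(X)=P_3(F_0)$).

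With $\rk(\cE_{3K}^{+})=1$ established, Corollary~\ref{product}(4) gives $\cD_{3K,0}\equiv F$ for a general fiber $F$ of $a_X$, which is exactly the statement that $\cD_P$ is a general fiber for general $P\in\PE$. Since $K_{F_0}^2\geq 2$, the hypotheses of Proposition~\ref{>1} are satisfied, and this directly delivers the birationality of $|5K_X|$. No real obstacle appears: the whole point is the arithmetic observation that $1+3K_{F_0}^2$ is prime for each $K_{F_0}^2\in\{2,4,6\}$, which combined with the structural identity of Lemma~\ref{bundle} pins down the rank of $\cE_{3K}^{+}$; the geometric content is then absorbed into Proposition~\ref{>1}.
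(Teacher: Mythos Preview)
Your proposal is correct and follows essentially the same route as the paper: both use the rank identity from Lemma~\ref{bundle} together with the primality of $P_3(F_0)=1+3K_{F_0}^2\in\{7,13,19\}$ and the bound $P_3(X)\geq 2$ from Lemma~\ref{plurigenera} to force $\rk(\cE_{3K}^{+})=1$, then invoke Corollary~\ref{product} and Proposition~\ref{>1}. Your write-up is simply a more explicit unpacking of the paper's terse argument.
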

\begin{proof}
When $K_{F_0}^2=2, 4, 6$, $P_3(F)$ is a prime number and $P_3(X)\geq 2$ by Lemma \ref{plurigenera} and \ref{bundle}, hence $\rank \cE_{3K}^+=1$ and we know
that $\cD_{P}$ is a general fiber of $a_X$ by Corollary \ref{product}. Therefore, $|5K_X|$ is birational by Proposition \ref{>1}.
\end{proof}



\begin{prop}\label{=1}
Assume that $K_{F_0}^2=1$, then $\cD_{P}$ is a general fiber of $a_X$. Moreover, $|5K_X|$ induces a birational map of $X$.
\end{prop}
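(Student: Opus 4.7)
The plan is to extend the two-step strategy of Proposition \ref{>1} to the tight case $K_{F_0}^2=1$: first show that $\cD_P$ is a general fiber of $a_X$, and then apply a Reider-type argument on a general $S\in|L_{3K}|$ to deduce birationality of $|5K_X|$.

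For the first step, Lemma \ref{bundle} gives $P_3(X)\cdot\rk(\cE_{3K}^+)=P_3(F)=4$, while Lemma \ref{plurigenera} gives $P_3(X)>P_2(X)\geq 1$, so $P_3(X)\in\{2,4\}$. If $P_2(X)\geq 2$, then $P_3(X)>2$ forces $P_3(X)=4$, hence $\rk(\cE_{3K}^+)=1$, and $\cD_P\equiv F$ by Corollary \ref{product}(4). The delicate subcase is $P_2(X)=1$: then $\cE_{2K}^+$ is the unique indecomposable ample rank-two degree-one bundle on $E$, and the hypothesis $P_3(X)=2$ would make both $|L_{3K,F}|$ and $|\cD_{3K,F}|$ pencils inside $|3K_F|$ whose product under multiplication realizes all of $H^0(F,3K_F)$. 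To rule this out, the plan is to exploit the rigid geometry of numerical Godeaux surfaces (minimal surfaces with $K_{F_0}^2=1$, $p_g=0$): using adjunction for both pencils, Hodge index, the constraint $L_{3K,F}+\cD_{3K,F}\leq 3K_F$ with $(3K_{F_0})^2=9$, $K_{F_0}\cdot 3K_{F_0}=3$, and the exclusion of elliptic or rational fibrations on a minimal surface of general type, to force a numerical inconsistency between the genera of general members of the two pencils.

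Once $\cD_P\equiv F$ is in hand, Lemma \ref{prepare1} guarantees that $L_{3K}$ is nef and big and that a general $S\in|L_{3K}|$ is a smooth surface of general type with $q(S)=1$ and a surjection $a_S\colon S\to E$ with general fiber $\Gamma=S\cap F$. Since $P_3(X)=4$ saturates the rank, one has $L_{3K,F}\sim\sigma_F^*3K_{F_0}$, so $\Gamma^2=9$, $K_F\cdot\Gamma=3$, giving $K_S\cdot\cD_{P,S}=2g(\Gamma)-2=12$ and $\cD_{P,S}^2=0$. Running the short exact sequence for $\cO_X(K_X+\lceil 3K_X+\pi^*K_{X_0}-E_3\rceil-S)$ together with Kawamata--Viehweg vanishing and the extension theorem as in Proposition \ref{>1}, the birationality of $|5K_X|$ reduces to verifying that $\cM:=\tfrac14(\sigma_S^*K_{S_0}+5\cD_{P,S})$ satisfies $\cM^2>8$ and $\cM\cdot C\geq 3$ for every general curve $C\subset S$; the curve bound is a routine Hodge-index check mirroring Proposition \ref{>1}.

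The principal obstacle is the inequality $\cM^2=(K_{S_0}^2+120)/16>8$, which demands $K_{S_0}^2\geq 9$, and it is this bound that makes $K_{F_0}^2=1$ the tightest case. To obtain it, the plan is to invoke Proposition \ref{J}(2) for the fibration $a_S\colon S\to E$ with fibers of genus $g(\Gamma)=7$, giving $\vol(\sigma_S^*K_{S_0})\geq 2h^0\bigl(E,(a_S)_*\sigma_S^*K_{S_0}\otimes Q\bigr)$ for general $Q\in\Pic^0(E)$, and then to bound the right-hand side from below using the uniform decomposition $a_{X*}\cO_X(3K_X)\cong\cE_{3K}^0\otimes\cE_{3K}^+$ together with $\chi(\cO_X)=0$. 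Once the numerical inputs are secured, Theorem \ref{adjoint-bundle} produces a section of $|K_S+\lceil\cM\rceil|$ separating two general points of $S$; lifting via the short exact sequence yields a section of $|5K_X|$ separating them, and varying $S$ over the three-dimensional linear system $|L_{3K}|$, together with the birationality of $|L_{3K}|_F$ on the general fiber (the mobile part of $|3K_F|$), completes the proof that $|5K_X|$ is birational on all of $X$.
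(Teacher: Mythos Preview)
Your overall architecture matches the paper's: first force $P_3(X)=4$ so that $\cD_P\equiv F$, then run the Reider argument on a general $S\in|L_{3K}|$ with $\cM=\tfrac14(\sigma_S^*K_{S_0}+5\cD_{P,S})$. But two of your key steps are genuine gaps, not details.

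\textbf{The curve bound.} You assert that ``$\cM\cdot C\geq 3$ for every general curve'' suffices and is a routine Hodge-index check. Neither is true. With $\cM^2$ only marginally above $8$ (even with $K_{S_0}^2=10$ one has $\cM^2=65/8$), the threshold in Theorem~\ref{adjoint-bundle} is $\tfrac{4}{1+\sqrt{1-8/\cM^2}}\approx 3.56$, so $\cM\cdot C\geq 3$ does \emph{not} exclude the bad curve. The paper actually proves $\cM\cdot C\geq 15/4$, and this is not routine: for a very general curve $C$ with $C^2=0$ dominating $E$, Hodge index only gives $\sigma_S^*K_{S_0}\cdot C\geq 2$ and $\cD_{P,S}\cdot C\geq 2$, hence $\cM\cdot C\geq 3$. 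The paper closes this gap via Lemma~\ref{c2=0}: because $q(S)=1$ the family $\{C_t\}$ cuts a pencil on the fiber $\Gamma$, and $\Gamma$ (a general member of $|3K_{F_0}|$) is non-hyperelliptic by Lemma~\ref{non-hyperelliptic}, forcing $\cD_{P,S}\cdot C=\deg(C\to E)\geq 3$. For $C^2=1$ the paper also needs a parity argument (if $\sigma_S^*K_{S_0}\cdot C=4$ then $\sigma_{S*}C$ has even self-intersection $\geq 2$, contradicting Hodge index with $K_{S_0}^2\geq 10$). None of this is in your sketch.

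\textbf{The bound on $K_{S_0}^2$.} Your plan to invoke Proposition~\ref{J}(2) amounts to $K_{S_0}^2\geq 2h^0(S,K_S\otimes Q)=2\chi(\cO_S)=2p_g(S)$, so you would need $p_g(S)\geq 5$; you give no argument for this, and your reference to the uniform decomposition of $a_{X*}\cO_X(3K_X)$ together with $\chi(\cO_X)=0$ does not obviously produce it. The paper proceeds entirely differently: from $K_S=(K_X+L_{3K})|_S$ one has $\sigma_S^*K_{S_0}\succeq(\pi^*K_{X_0})_S+L_{3K,S}$, and squaring gives
\[
K_{S_0}^2\;\geq\; L_{3K}^3+2\,\pi^*K_{X_0}\cdot L_{3K}^2+(\pi^*K_{X_0})^2\cdot L_{3K}.
\]
These three terms are bounded below by $2$, $6$, and $2$ respectively, using $L_{3K}^3\geq 2$, $(L_{3K,F})^2\geq 7$ (from the at most two simple base points of $|3K_{F_0}|$), $\pi^*K_{X_0}\succeq\tfrac13(L_{3K}+\cD_P)$, and the identification $\pi^*K_{X_0}|_F=\sigma_F^*K_{F_0}$ (obtained from continuous global generation of $a_{X*}\omega_X^m$). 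This yields $K_{S_0}^2\geq 10$.

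Two smaller points. Your Step~1 case split on $P_2(X)$ and appeal to the ``rigid geometry of Godeaux surfaces'' is unnecessary: the paper disposes of $P_3(X)=2$ in one line by observing that then $H^0(F,3K_F)\cong W_3^0\otimes W_3^+$ with both factors $2$-dimensional forces the tricanonical image of $F$ to lie on a smooth quadric in $\P^3$, contradicting the birationality of $|3K_F|$. Also, your claim $L_{3K,F}\sim\sigma_F^*3K_{F_0}$ is not justified here since $|3K_{F_0}|$ can have base points when $K_{F_0}^2=1$; one only gets $(L_{3K,F})^2\geq 7$, but the genus computation $2g(\Gamma)-2=12$ survives because a general member of $|3K_{F_0}|$ is already smooth.
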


\begin{proof} This case is more involved.\\

\noindent
{\bf Step 1.} $P_3(X)=4$ and  $\cD_{P}$ is a general fiber of $a_X$.\\
If $K_{F_0}^2=1$,  then $P_3(F)=4$ and hence $P_3(X)=2$ or $4$ since $P_3(X)>1$.
Suppose on the contrary that $P_3(X)=2$. Then $H^0(F, 3K_F) \cong W_3^0 \otimes W_3^+$,
where both $W_3^0$ and $W_3^+$ are $2$-dimensional (cf. Corollary \ref{product}).
 Then the image of $|3K_F|$ is a smooth quadric in $\mathbb P^3$, which contradicts the fact that $|3K_F|$ induces a birational map of $F$.

Therefore $P_3(X)=4$. It follows that $\rank \cE_{3K}^+=1$ by Lemma \ref{bundle} and hence
$\cD_{P}$ is a general fiber of $a_X$ by Lemma \ref{product}.\\

\noindent
{\bf Step 2.}\label{prepare2}
 $L_{3K}^3\geq 2$, and $\cD_{P} \cdot L_{3K}^2\geq 7$.  Moreover, a general section of $|L_{3K, F}|$ is a smooth curve of
genus $7$ and hence  $\sigma_S^*K_{S_0}\cdot \cD_{P,S}=K_S\cdot\cD_{P,S}=12$.

The linear system $|L_{3K}|$ defines a generically finite morphism from $X$ to $\mathbb P^3$. Hence $L_{3K}^3\geq 2$.

We know that $|3K_{F_0}|$ has no fixed component and has less than $2$ simple base points (see for instance \cite[Corollary 12]{Cat}). Hence $(L_{3K,
F})^2\geq 9-2=7$.
A general member of $|3K_{F_0}|$ is already smooth with genus $7$. Hence a general member $\Gamma \in |L_{3K}|_F$, which is a proper transform of a general member in $|3K_{F_0}|$ has genus $7$.
Notice that $a|_S : S\to E$ factor through $S_0$. Therefore, the exceptional set of $\sigma_S: S \to S_0$ does not intersect $F$  and hence $\Gamma$.
If follows that $$ 12= 2g(\Gamma)-2= K_S \cdot \Gamma = \sigma_S^* K_{S_0} \cdot \Gamma=\sigma_S^* K_{S_0} \cdot \cD_{P,S} .$$

\noindent {\bf Step 3.} $K_{S_0}^2 \ge 10$. \\

Note that $(K_X+L_{3K})_S=K_S$. Thus $\sigma^*K_{S_0}\succeq (\pi^*K_{X_0})_S+ L_{3K, S}$. Hence
$$K_{S_0}^2\geq L_{3K}^3+2\pi^*K_{X_0}\cdot L_{3K}^2+(\pi^*K_{X_0})^2\cdot L_{3K}.$$ Moreover since $\pi^*K_{X_0}\succeq \frac{1}{3}(\cD_P+L_{3K})$, we
have by Lemma \ref{prepare2} that $$\pi^*K_{X_0}\cdot L_{3K}^2\geq \frac{1}{3}L_{3K}^3+\frac{1}{3}\cD_P\cdot L_{3K}^2\geq 3$$ and
\begin{eqnarray*}(\pi^*K_{X_0})^2\cdot L_{3K} &\geq &\frac{1}{3}\cD_P\cdot \pi^*K_{X_0}\cdot L_{3K}+\frac{1}{3}\pi^*K_{X_0}\cdot L_{3K}^2\\   &\geq
&\frac{1}{3}\cD_P\cdot \pi^*K_{X_0}\cdot L_{3K}+1.\end{eqnarray*}

Note that $a_X$ factors through $\pi: X\rightarrow X_0$. Moreover, for any $m$ sufficiently large and divisible and $P\in \Pic^0(E)_{\mathrm{tor}}$
torsion, $|mK_X+P|= \pi^*|mK_{X_0}+P|+mK_{X/X_0}$. Moreover, for $m\geq 2$, $a_{X*}\omega_X^m$ is IT$^0$ sheaves. Hence  $a_{X*}\omega_X^m$ is
continuously globally generated by \cite{pp1}. Thus the evaluation map $$\bigoplus_{P\in \Pic^0(E)_{\mathrm{tor}}}H^0(mK_X+P)\rightarrow H^0(F, mK_F)$$ is
surjective. We then conclude that $\pi^*K_{X_0, F}=\sigma^*K_{F_0}$.
 Thus $$\pi^* K_{X_0} \cdot \Gamma= \cD_P\cdot \pi^*K_{X_0}\cdot L_{3K}= \pi^*K_{K_0, F} \cdot_F L_{3K,F}={3K_{F_0}^2}=3.$$ Combining all the inequalities above, we have $K_{S_0}^2\geq
10$.\\ 

\noindent {\bf Step 4.}  $|K_S+\lceil \pi^*K_{X_0}+\cD_{P}\rceil|_{S}$ defines a birational map of $S$.\\

We consider $\cM=\frac{1}{4} \sigma_S^* K_{S_0} + \frac{5}{4} \cD_{P,S}$ and $\pi^*K_{X_0, S}+\cD_{P, S} \ge \cM$ as before. Now we have
 $$ (\pi^*K_{X_0, S}+\cD_{P, S})^2\geq \cM^2= \frac{1}{16}(K_{S_0}^2+10\sigma_S^*K_{S_0}\cdot\cD_{P, S}) \geq\frac{65}{8}>8.$$

Since $\pi^*K_{X_0} \cdot \Gamma \ge 3$, it follows that $|K_S+\lceil \pi^*K_{X_0}+\cD_{P}\rceil|_{\Gamma}$ is birational. Thanks to the vanishing of $H^1(S, K_S + \lceil \pi^*K_{X_0} \rceil)$, it follows that the system $|K_S+\lceil \pi^*K_{X_0}+\cD_{P}\rceil|_{S}$ separate points on $\Gamma$, that is, general fiber of $S\to E$. It remains to consider points on different fiber of $S \to E$.

We then consider $\cM \cdot C$ for a very general curve $C$  on $S$ dominating $E$.


Suppose that $C^2 \ge 2$, then $\sigma_S^*K_{S_0}\cdot C\geq 5$ by Hodge Index Theorem. Since $g(C) \ge 2$, then it is clear that $\cD_{P,S} \cdot C \ge 2$ for $a|_C: C \to E$ has degree $\ge 2$.
Hence $\cM \cdot C \ge \frac{15}{4}$.

Suppose that $C^2=1$, then $\sigma_S^*K_{S_0}\cdot C\geq 4$ by Hodge Index Theorem. Suppose furthermore that $K_{S_0} \cdot \sigma_S(C)=\sigma_S^*K_{S_0}\cdot C= 4$. Since $\sigma_S(C)^2$ is even and $\sigma_S(C)^2 \ge C^2=1$, one sees that $\sigma_S(C)^2 \ge 2$ and leads to a contradiction to Hodge Index Theorem. Therefore $ \sigma_S^*K_{S_0}\cdot C\geq 5$ and $\cM \cdot C \ge \frac{15}{4}$ in this situation.
Suppose that $C^2=0$, by Lemma \ref{c2=0}, one has $\cM \cdot C \ge \frac{15}{4}$ clearly.
Therefore, for very general curve, we always has $$ \cM \cdot C \ge \frac{15}{4}  > \frac{4}{1+ \sqrt{1/65} }.$$
We can conclude  that $|5K_X|$ induces a birational map of $X$ by Theorem \ref{adjoint-bundle} and the last paragraph of the proof of Proposition \ref{>1}.
\end{proof}


\begin{prop}\label{=3}
Assume that $K_{F_0}^2=3$, then $\cD_{P}$ is a general fiber of $a_X$. Moreover, $|5K_X|$ induces a birational map of $X$.
\end{prop}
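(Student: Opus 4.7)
The plan is to follow the strategy of Proposition \ref{=1} and of the corollary treating $K_{F_0}^2 \in \{2,4,6\}$: I will reduce to showing that $\cD_P$ is a general fiber of $a_X$, and then invoke Proposition \ref{>1} directly, since $K_{F_0}^2 = 3 \ge 2$. The bulk of the work lies in establishing this reduction.

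First I enumerate the numerical possibilities. For $K_{F_0}^2 = 3$ one has $P_3(F_0) = 10$. By Lemma \ref{plurigenera}, $P_3(X) \ge 2$, and by Lemma \ref{bundle}, $P_3(X) \cdot \rk(\cE_{3K}^+) = P_3(F) = 10$, forcing
\[ (P_3(X),\, \rk \cE_{3K}^+) \in \{(2,5),\, (5,2),\, (10,1)\}. \]
The case $(10,1)$ gives $\cD_P \equiv F$ by Corollary \ref{product}.4, which is the desired conclusion. Since $P_3(F_0)=10$ is composite, unlike in the cases $K_{F_0}^2 \in \{2,4,6\}$, the argument cannot be closed by mere divisibility; the two intermediate bidegrees $(2,5)$ and $(5,2)$ must be ruled out separately.

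Next I plan to exclude both intermediate cases by working on $F_0$. Set $\Lambda_3 = \sigma_{F*}(L_{3K,F})$ and $\Delta = \sigma_{F*}(\cD_F)$, so that $\Lambda_3 + \Delta \sim 3K_{F_0}$ with $|\Lambda_3|$ and $|\Delta|$ base point free (Lemma \ref{free}). The tensor product isomorphism $W_3^0 \otimes W_3^+ \xrightarrow{\sim} H^0(F_0, 3K_{F_0})$ forces $\phi_{3K_{F_0}}$ to factor birationally through the Segre embedding of $\mathbb{P}(W_3^0) \times \mathbb{P}(W_3^+)$ inside $\mathbb{P}^9$. In the case $(5,2)$, $|\Delta|$ is a base point free pencil; since $q(F_0)=0$, its Stein factorization yields a fibration $\phi_\Delta: F_0 \to \mathbb{P}^1$ with $\Delta$ equal to the class $G$ of a general fiber and $\Delta^2 = 0$. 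Nef-ness of $K_{F_0}$ rules out fibers of genus $0$ or $1$ via Hodge index, and nef-ness of $\Lambda_3$ through $\Lambda_3^2 = 39 - 12 g \ge 0$ restricts to $g \in \{2,3\}$. I then restrict the multiplication map to a general fiber $G$: since $\Delta|_G \cong \cO_G$, the restriction of $W_3^+$ to $G$ is one-dimensional, so the restricted map lands in $H^0(G, \Lambda_3|_G)$, a space of dimension $5g-5$ by Riemann--Roch. A careful dimension count, combined with the Segre factorization and the birationality of $\phi_{3K_{F_0}}$ from Theorem \ref{surface}, should exclude both $g=2$ and $g=3$. The case $(2,5)$ is handled symmetrically with the roles of $\Lambda_3$ and $\Delta$ exchanged.

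Once $\rk(\cE_{3K}^+) = 1$ is established, Corollary \ref{product}.4 gives $\cD_P \equiv F$, and Proposition \ref{>1} concludes that $|5K_X|$ is birational. The main obstacle is the intermediate step: the bare numerical constraints in both $(2,5)$ and $(5,2)$ admit several profiles not excluded by Hodge index alone, and the contradictions must be extracted from the interplay between the tensor-product factorization of $H^0(F_0, 3K_{F_0})$ and the birationality of the tricanonical map of $F_0$.
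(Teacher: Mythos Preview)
Your overall strategy is the same as the paper's: enumerate the factorizations $(P_3(X),\rk\cE_{3K}^+)\in\{(2,5),(5,2),(10,1)\}$, rule out the two intermediate cases by analyzing the induced decomposition $3K_{F_0}=\Lambda_3+\Delta$ on $F_0$, and then invoke Proposition~\ref{>1}. Your numerics $\Lambda_3^2=39-12g$ and the restriction $g\in\{2,3\}$ are correct.

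The genuine gap is at $g=2$. Your proposed method---restricting the multiplication map to a fiber $G$ and appealing to a dimension count together with the Segre factorization of $\phi_{3K_{F_0}}$---does not yield a contradiction there. For $g=2$ one has $\deg(\Lambda_3|_G)=6=2g+2$, so $\Lambda_3|_G$ is very ample with $h^0=5$; the restriction $\phi_{W_3^0}|_G:G\to\mathbb{P}^4$ is an embedding, and nothing in the picture prevents $(\phi_{W_3^0},\phi_{W_3^+}):F_0\to\mathbb{P}^4\times\mathbb{P}^1$ from being birational onto its image. The tensor factorization and the birationality of $\phi_{3K_{F_0}}$ alone impose no further obstruction in this case, so the ``careful dimension count'' you allude to cannot close the argument.

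The paper handles this differently. A degree estimate on the image of $\phi_{W_3^0}$ in $\mathbb{P}^4$ gives $\Lambda_3^2\ge 4$: a surface of minimal degree $3$ in $\mathbb{P}^4$ is not of general type, so if $\deg V=3$ then $\deg(\phi_{W_3^0})\ge 2$ and $\Lambda_3^2\ge 6$. This already forces $g=2$. The remaining case is then excluded by Xiao's theorem \cite[Th\'eor\`eme~2.2]{X}: a surface of general type with $p_g=0$ carrying a genus~$2$ pencil satisfies $K_{F_0}^2\le 2$, contradicting $K_{F_0}^2=3$. This external input is the missing ingredient in your proposal; I do not see how to extract the $g=2$ contradiction from the Segre/restriction setup without it.
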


\begin{proof} We proceed with the following setting:\\

\noindent {\bf Setting.}
Since $K_{F_0}^2 \ge 2$, we know that$|3K_{F_0}|$ is base point free (\cite[Theorem 5.1]{BPV}). We recall some notation as in last sections. Let $\Lambda_3:=\sigma_*L_{3K, F}$ and $\Delta:=\sigma_*\cD_{P,F}$.
Then $H^0(F_0, 3K_{F_0})\simeq W_3^0\otimes W_3^+$, where $W_3^0 $ (resp.
$W_3^+$) can be regarded as a subspace of $H^0(F_0, \Lambda_3)$ (resp. $H^0(F_0, \Delta)$).
Notice  that both $|W_3^0|$ and $|W_3^+|$ are free (and so are  $|\Lambda_3|$ and $|\Delta|$) by Corollary \ref{product}.3.\\

\noindent {\bf Claim 1.} None of the system $|\Lambda_3|$ or $|\Delta|$ defines a pencil. \\\\
Suppose first that  $|W_3^+|$ defines a pencil of curve, denoted $G$,  of genus $g(G) \geq 2$.  Hence $K_{F_0}\cdot \Delta=2g(G)-2$ is even.

Suppose furthermore that $|W_3^0|$ defines a pencil of curves as well, then $K_{F_0} \cdot \Lambda_3$ is even again.
However,  $$9=3K_{F_0}^2=K_{F_0}\cdot \Lambda_3+K_{F_0}\cdot \Delta.$$

Therefore, $|W_3^0|$ can not be a pencil. Now consider the situation that $|W_3^0|$ defines a
generically finite morphism from $h: F_0 \to  \mathbb P^m$, where $m=h^0(W_3^0)-1=\frac{1}{2}P_3(F)-1$ and denote by $V$ its image. Clearly, $\deg(V) \ge h^0(W_3^0)-2$.

Since $\Lambda_3^2 \ge \deg(V) \ge h^0(W_3^0)-2 = \frac{1}{2}P_3(F)-2=3$.
Suppose that  $\deg(V)=3$ then it is a cubic surface in $\mathbb{P}^4$, which is not of general type and hence
$\Lambda_3^2 \ge \deg h \cdot \deg V \ge 6$.
 In any event, we have  that $\Lambda_3^2 \ge 4$.
 Together with $$27= \Lambda_3^2+2\Lambda_3\cdot \Delta+\Delta^2=\Lambda_3^2+2(\Lambda_3+\Delta )\cdot \Delta =  \Lambda_3^3+ 6K_{F_0} \cdot \Delta=\Lambda_3^2+12(g(G)-1),$$
one sees that $g(G)=2$ and $\Lambda_3^2=15$.
However, by \cite[Th\'eor\`eme 2.2]{X}, a  surface $F$ of general type with $p_g=0$ and a pencil of genus $2$ curve should have $K_{F_0}^2\leq 2$. Hence we get a contradiction.
The above argument shows that none of the system $|\Lambda_3|$ or $|\Delta|$ defines a pencil.\\

\noindent {\bf Claim 2.}
 Then $\cD_P$ is a general fiber of $a_X$  and  $|5K_X|$ is birational. \\\\
Suppose that $K_{F_0}^2=3$, then $P_3(X)|P_3(F_0)=10$. If $P_3(X)=2$ or $5$, then either $|\Lambda_3|$  or $|\Delta|$ is a pencil by Corollary \ref{product}.1 , which is a contradiction to Claim 1. Since $P_3(X) >1$, then we have $P_3(X)=10$ and therefore $a_* \cD_P$ is of rank $1$. 
Therefore, by Corollary \ref{product}.4,  $\cD_P$ is a general fiber and hence $|5K_X|$ is birational by Proposition \ref{>1}.
\end{proof}

 \subsection{General fiber has $K_{F_0}^2= 5, 7,  8, 9$.\\}

 Note that $X$ is  special and by Theorem \ref{nonzero}, we will assume furthermore that $P_2(X)=1$. Hence  we have $$|2K_X+P|=T_P+E_2,$$ where $T_P$ is smooth and
 nef.

  Let $T=T_P$ for some general $P$. Considering the short exact sequence
\begin{multline*}0\rightarrow K_X+\lceil 2\pi^*K_{X_0}+2K_X+P-E_2-T \rceil  \rightarrow K_X+\lceil 2\pi^*K_{X_0}+2K_X+P-E_2 \rceil \\ \rightarrow
K_{T}+\lceil 2\pi^*K_{X_0}\rceil_T\rightarrow 0.
\end{multline*}
  By Kawamata-Viehweg vanishing theorem, we have the surjectivity of $$H^0(X, K_X+\lceil 2\pi^*K_{X_0}+2K_X+P-E_2 \rceil )\rightarrow H^0(T,  K_{T}+\lceil
  2\pi^*K_{X_0}\rceil_{T}).$$

  \begin{lemm}
If $K_{F_0}^2\geq 5$, then the volume $\vol(T)=T^3 \geq 10$.
  \end{lemm}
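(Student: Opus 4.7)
The plan is to apply Proposition~\ref{J}(1) to the nef divisor $T$ on the fibration $a_X\colon X\to E$, exploiting that $|2K_{F_0}|$ is base-point-free when $K_{F_0}^2\ge 5$ (as already noted in the paper, it fails only when $K_{F_0}^2=3,4$) to pin down the restriction of $T$ to a general fiber. The numerics will saturate exactly at $K_{F_0}^2=5$.

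First I would identify $T|_F$ on a general fiber $F$ of $a_X$. Since $P_2(X)=1$ forces $L_{2K}\sim 0$ in the refined uniform decomposition, we have $T+E_2\sim 2K_X+P$, which restricts to $T_F+E_{2,F}\sim 2K_F$. Lemma~\ref{free} applied with $m=2$ then gives $T_F=2\sigma_F^*K_{F_0}$; in particular $|T|_F|$ is base-point-free and $(T|_F)^2=4K_{F_0}^2$. Next I would study $a_{X*}\mathcal O_X(T)$: the inclusion $a_{X*}\mathcal O_X(T)\hookrightarrow a_{X*}\mathcal O_X(T+E_2)=\cE_{2K}^+\otimes P$, together with the ampleness of $\cE_{2K}^+$ given by Lemma~\ref{bundle1}, shows $a_{X*}\mathcal O_X(T)$ is ample of generic rank
\[
r\le\rk(\cE_{2K}^+)=P_2(F)=1+K_{F_0}^2.
\]
Bigness of $T$ then follows because $T|_F$ is already big on $F$ and the family $\{T_P\}_{P\in\PE}$ covers $X$, so $T$ is not pulled back from $E$. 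Finally, $h^0(X,T)=1$ from the short exact sequence $0\to\mathcal O_X(T)\to\mathcal O_X(T+E_2)\to\mathcal O_{E_2}(T+E_2)\to 0$ combined with $h^0(T+E_2)=h^0(2K_X+P)=P_2(X)=1$ (using that $a_{X*}\omega_X^2$ is $\mathrm{IT}^0$) and the effectivity of $T$.

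With these ingredients in place, Proposition~\ref{J}(1) applied with $n=3$ and $D=T$ yields
\[
T^3\ge\frac{3}{r}(T|_F)^2\cdot h^0(X,T)\ge\frac{3}{1+K_{F_0}^2}\cdot 4K_{F_0}^2\cdot 1=\frac{12K_{F_0}^2}{1+K_{F_0}^2}\ge 10
\]
for $K_{F_0}^2\ge 5$, the bound being attained precisely at $K_{F_0}^2=5$. The main obstacle is verifying the semistability hypothesis of Proposition~\ref{J}(1): ampleness transfers along the inclusion into $\cE_{2K}^+\otimes P$, but semistability does not. Since every vector bundle on the elliptic curve $E$ decomposes as a direct sum of semistable bundles, one should apply Proposition~\ref{J}(1) to the individual ample semistable summands (or reduce via an \'etale base change on $E$) and then verify that the estimate survives this reduction; the other hypotheses—base-point-freeness of $T|_F$ and control of the rank—are essentially immediate from the refined uniform decomposition combined with the explicit form $T_F=2\sigma_F^*K_{F_0}$.
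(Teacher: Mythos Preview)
Your approach is the same as the paper's: identify $T_F=2\sigma_F^*K_{F_0}$ via base-point-freeness of $|2K_{F_0}|$ for $K_{F_0}^2\ge 5$, then feed $T$ into Proposition~\ref{J}(1) with $h^0(X,T)=1$ to get $T^3\ge \dfrac{12K_{F_0}^2}{1+K_{F_0}^2}\ge 10$.

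The one place where you leave a gap is exactly the semistability hypothesis you flag. Your fallback (split into semistable summands or pass to an \'etale cover) is not a proof as stated, because Proposition~\ref{J}(1) applies to the pushforward of a single divisor, not to an arbitrary summand of it. The clean fix is much simpler than what you propose. The inclusion $a_{X*}\cO_X(T)\hookrightarrow a_{X*}\cO_X(T+E_2)=\cE_{2K}^+\otimes P$ is in fact an \emph{equality}: since $E_{2,F}$ is $\sigma_F$-exceptional and $T_F=2\sigma_F^*K_{F_0}$, one has $h^0(F,T_F)=h^0(F_0,2K_{F_0})=h^0(F,2K_F)=P_2(F)$, so both sheaves have the same rank $r=1+K_{F_0}^2$ and the inclusion is an isomorphism. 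Now use the structural fact built into Lemma~\ref{bundle}: the bundle $\cE_{2K}^+$ is ample of degree exactly~$1$. On an elliptic curve, an ample vector bundle of degree~$1$ is necessarily indecomposable (any nontrivial decomposition would force a summand of degree $\le 0$, contradicting ampleness), hence semistable by Atiyah's theorem. With this, the hypothesis of Proposition~\ref{J}(1) is met and your computation goes through verbatim.
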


  \begin{proof}
  Since $K_{F_0}^2\geq 5$, the linear system $|2K_{F_0}|$ is base point free. Hence $T_{P, F}\sim 2\sigma^*K_{F_0}$. It is easy to see that $a_{X*}\mathcal
O_X(T)=a_{X*}(\omega_X^2\otimes P)$
  is an ample vector bundle $\mathcal V$ of degree $1$ and of rank $1+K_{F_{0}}^2$. By Proposition \ref{J}, we know that $$\vol(T)=T^3\geq
  3\frac{T_F^2}{1+K_{F_0}^2}=\frac{12K_{F_0}^2}{1+K_{F_0}^2}\geq 10,$$ when $K_{F_0}^2\geq 5$.
  \end{proof}

\begin{lemm}$P_3(X)=\dim W_3^0\geq 4$.
\end{lemm}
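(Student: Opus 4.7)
The plan is to apply Reid's Riemann--Roch to the increment $P_3(X)-P_2(X)$ (as in the proof of Lemma \ref{plurigenera}) and combine it with the volume bound $\vol(T)\ge 10$ just established. Since $p_g(F)=q(F)=0$ in this subsection, the computation in the proof of Lemma \ref{plurigenera} gives $\chi(\cO_X)=\chi(\cO_Y)=0$, and under the standing hypothesis $P_2(X)=1$, Reid's formula then yields
\[
P_3(X)=P_2(X)+2K_Y^3-2\chi(\cO_Y)+\mathcal{R}_3=1+2K_Y^3+\mathcal{R}_3\ \ge\ 1+2K_Y^3,
\]
with $\mathcal{R}_3\ge 0$ the singularity contribution.

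I would then bound $K_Y^3$ from below using the preceding lemma. The refined uniform decomposition gives $|2K_X+P|=T+E_2$ with $E_2$ effective, so by monotonicity of volume under addition of an effective divisor, $\vol(2K_X)\ge \vol(T)\ge 10$. Identifying $\vol(2K_X)=8\,\vol(K_X)=8K_Y^3$ on the canonical model, we obtain $K_Y^3\ge 5/4$, and hence $P_3(X)\ge 1+2\cdot\tfrac{5}{4}=\tfrac{7}{2}$; integrality of $P_3(X)$ forces $P_3(X)\ge 4$, as desired.

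The only subtle point is the passage between $X$ and the canonical model $Y$: the identification $\vol(2K_X)=8K_Y^3$ and the monotonicity $\vol(2K_X)\ge\vol(T)$ from $E_2$ effective are both standard once everything is pulled back to $Y$, and no additional geometric input is required beyond the volume estimate of the previous lemma, the standing hypothesis $P_2(X)=1$, and the vanishing $\chi(\cO_X)=0$ already available in this subsection.
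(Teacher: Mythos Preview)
Your proof is correct and follows essentially the same route as the paper: both arguments deduce $K_Y^3\ge 5/4$ from $T^3\ge 10$ via $2K_X\succeq T$ (with $E_2$ effective), and then feed this volume bound into Reid's Riemann--Roch together with $\chi(\cO_X)=0$ and integrality. The only cosmetic difference is that the paper applies Riemann--Roch directly to $P_3(X)\ge\tfrac{5}{2}K_Y^3\ge\tfrac{25}{8}$, whereas you use the increment formula $P_3-P_2=2K_Y^3+\mathcal{R}_3$ from Lemma~\ref{plurigenera} together with the standing hypothesis $P_2(X)=1$; both yield $P_3(X)\ge 4$. (One small terminological slip: you call $Y$ the ``canonical model'' while invoking the formula of Lemma~\ref{plurigenera}, which is stated for the minimal model; since $\vol(K_X)=K_Y^3$ in either case this does not affect the argument.)
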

\begin{proof}
We have seen that $T^3\geq 10$. Moreover, $2\pi^*K_{X_0}\succeq T$.
Hence $\vol(X)=(\pi^*K_{X_0})^3\geq \frac{1}{8}T^3\geq \frac{5}{4}$. Since $\chi(\mathcal{O}_X)=0$, the Riemann-Roch formula implies that $P_3(X) \ge \frac{5}{2} \vol(X) \ge \frac{25}{8}$ and hence $P_3(X) \ge 4$.
\end{proof}

\begin{lemm} Suppose that $\chi(\mathcal{O}_X)=0$ and $P_2(X)=1$. Then $P_3(X) \le 5$.
Moreover, $P_3(X)=5$ if and only if its minimal model $X_0$ has at worst Gorenstein singularities.
\end{lemm}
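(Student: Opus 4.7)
The plan is to invoke Reid's Riemann--Roch formula on the minimal model $X_0$, following the proof of Lemma~\ref{plurigenera}. Setting $\chi(\cO_{X_0})=\chi(\cO_X)=0$ in the formulas quoted there yields
$$P_2(X) = \tfrac{1}{2}K_{X_0}^3 + \mathcal{R}_2, \qquad P_3(X)-P_2(X) = 2\, K_{X_0}^3 + \mathcal{R}_3,$$
where $\mathcal{R}_2,\mathcal{R}_3\ge 0$ are the singularity corrections; each is a sum of local contributions running over the basket $\cB$ of non-Gorenstein terminal cyclic quotient points of $X_0$, and both vanish identically when $\cB=\varnothing$. Eliminating $K_{X_0}^3$ via the hypothesis $P_2(X)=1$, i.e.\ $K_{X_0}^3 = 2(1-\mathcal{R}_2)$, the two identities combine to the clean expression
$$P_3(X) = 5 - (4\mathcal{R}_2 - \mathcal{R}_3).$$
So the whole proposition reduces to the numerical inequality $\mathcal{R}_3 \le 4\mathcal{R}_2$, with equality iff $\cB=\varnothing$.

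Next, I would verify this inequality basket-point by basket-point. A point of type $\tfrac{1}{r}(1,-1,b)$ with $\gcd(b,r)=1$ and $1\le b\le r-1$ contributes $\tfrac{b(r-b)}{2r}$ to $\mathcal{R}_2$ and $\tfrac{\overline{2b}\,(r-\overline{2b})}{2r}$ to $\mathcal{R}_3$, where $\overline{2b}$ denotes the residue of $2b$ modulo $r$. The required per-point inequality is therefore
$$\overline{2b}\,(r-\overline{2b}) \;\le\; 4\, b(r-b),$$
and a routine case split---$\overline{2b}=2b$ if $2b<r$, $\overline{2b}=2b-r$ if $2b>r$, and $\overline{2b}=0$ in the one remaining case $(r,b)=(2,1)$---reduces the inequality to $0\le r$, $0\le 2r$ and $0\le 4$ respectively, each of which is strict for every admissible $(r,b)$.

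Hence $\mathcal{R}_3 < 4\mathcal{R}_2$ strictly whenever $\cB\ne \varnothing$, yielding $P_3(X)<5$; conversely, if $X_0$ is Gorenstein then $\cB=\varnothing$, so $\mathcal{R}_2=\mathcal{R}_3=0$, which forces $K_{X_0}^3=2$ and $P_3(X)=5$. I do not anticipate any serious obstacle: the numerical reduction is immediate from the formulas already in use, and the basket-point inequality is elementary arithmetic on residues modulo $r$. The only mild subtlety is simply keeping straight that each basket point's contribution to the \emph{incremental} quantity $\mathcal{R}_3$ in the difference form of Reid's formula is indeed $\tfrac{\overline{2b}(r-\overline{2b})}{2r}$, i.e.\ $c_P(3)-c_P(2)$ rather than $c_P(3)$.
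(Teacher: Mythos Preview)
Your proposal is correct and is essentially the same argument as the paper's, only made explicit. The paper's proof is a one-line citation to \cite[3.5]{CC1}, which records the identity
\[
\sigma(X_0)\;=\;10\chi(\cO_{X_0})+5\chi_2(X_0)-\chi_3(X_0)\;=\;5l(2)-l(3)\;\ge\;0,
\]
with equality exactly when the basket is empty; your quantity $4\mathcal{R}_2-\mathcal{R}_3$ is precisely this $\sigma(X_0)$ (since $\mathcal{R}_3=l(3)-l(2)$ and $\mathcal{R}_2=l(2)$), and your per-point case analysis $4b(r-b)>\overline{2b}\,(r-\overline{2b})$ is exactly the computation behind the cited inequality. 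So there is no genuine methodological difference---you have simply unpacked the reference.
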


\begin{proof} Let $X_0$ be the minimal model of $X$. By \cite[3.5]{CC1}, one has $$\sigma(X_0)=10 \chi (\mathcal{O}_{X_0})+5 \chi_2(X_0)-\chi_3(X_0) = 5-P_3(X) \ge 0.$$
Moveover, $\sigma(X_0)=0$ if and only if $X_0$ has at worst Gorenstein singularities.
\end{proof}

By [Theorem 1.1] of \cite{CCZ}, $|5K_{X_0}|$ is birational if $X_0$ is minimal with at worst Gorenstein singularities.
Moreover, $P_3(X) | P_3(F)$ for special irregular threefolds (cf. Lemma \ref{bundle}). Therefore, we conclude that
\begin{cor}
\begin{enumerate}
\item $|5K_X| \text{ is birational unless } \chi(\mathcal{O}_X)=0, P_2(X)=1  \text{ and } P_3(X) =4.$

\item In the case that $K_{F_0}^2=7$ or $8$, one has that  $|5K_X|$  is birational.
\end{enumerate}
\end{cor}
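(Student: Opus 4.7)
The plan is to assemble the partial results from Sections 3--6 and plug the one remaining gap via a clean arithmetic divisibility.

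For part (1), I would first peel off the easy reductions. When $\chi(\omega_X)>0$ the Chen--Hacon result recalled in the introduction gives birationality of $|5K_X|$, and when the Albanese image is a curve of genus $\geq 2$ the bi-canonical argument via Theorem \ref{surface} applied fiberwise suffices. This leaves condition $(\dagger)$. If $p_g(F)\neq 0$ I would invoke Theorem \ref{nonzero}(1). If $p_g(F)=0$, the $R^i a_*\omega_X$ computation at the start of Section 6 forces $\chi(\mathcal O_X)=0$; here Propositions \ref{>1}, \ref{=1}, \ref{=3} and the corollary preceding Proposition \ref{=1} already treat $K_{F_0}^2\in\{1,2,3,4,6\}$ unconditionally, while Theorem \ref{nonzero}(2) handles $P_2(X)\geq 2$ with $K_{F_0}^2\geq 3$. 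The residual configuration is $P_2(X)=1$ with $K_{F_0}^2\in\{5,7,8,9\}$, and the two lemmas just above pin down $P_3(X)\in\{4,5\}$. The case $P_3(X)=5$ is won by the Gorenstein criterion combined with \cite[Theorem 1.1]{CCZ}, leaving only $P_3(X)=4$, which is the exceptional triple flagged in (1).

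For part (2), the core observation is the arithmetic equality
$$P_3(X)\cdot \textrm{rk}(\cE_{3K}^+)\,=\,P_3(F)$$
from the last assertion of Lemma \ref{bundle} applied to $M=3K_X$, valid since $X$ is special and hence $\cZ_{3K_X}$ dominates $X$. The divisor table gives $P_3(F)=22$ when $K_{F_0}^2=7$ and $P_3(F)=25$ when $K_{F_0}^2=8$; the positive divisors of these integers are $\{1,2,11,22\}$ and $\{1,5,25\}$ respectively, and in neither case does $4$ appear. Therefore $P_3(X)=4$ cannot occur under these hypotheses, so the exceptional triple in (1) is excluded and $|5K_X|$ is birational.

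The argument is essentially a bookkeeping collation; the only point requiring care is the logical sequencing, since the divisibility used in (2) presumes $\cZ_{3K_X}$ dominates $X$, i.e.\ that we have already reduced to the special-irregular case. I would therefore carry out the Chen--Hacon, surface-factoring, and $p_g(F)\neq 0$ reductions first, and invoke Lemma \ref{bundle} only once those are in place.
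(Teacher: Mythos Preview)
Your proposal is correct and the key mechanism for part (2)---the divisibility $P_3(X)\cdot\rk(\cE_{3K}^+)=P_3(F)$ from Lemma \ref{bundle}, ruling out $P_3(X)=4$ since $4\nmid 22$ and $4\nmid 25$---is exactly what the paper uses.

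The only difference is one of scope: the Corollary sits inside Section 6.2, where the standing hypotheses (special irregular, $p_g(F)=0$, $K_{F_0}^2\in\{5,7,8,9\}$, $P_2(X)=1$) are already in force, so $\chi(\cO_X)=0$ and $P_2(X)=1$ are automatic and the paper's proof of (1) is just the one-line deduction from the two preceding lemmas plus \cite{CCZ}. You read (1) as a global statement about all irregular threefolds and accordingly recapitulate the entire reduction chain (Chen--Hacon for $\chi>0$, Theorem \ref{surface} for higher-genus Albanese image, Theorem \ref{nonzero} for $p_g(F)>0$ or $P_2(X)\geq 2$, the $K_{F_0}^2\leq 4$ and $=6$ propositions). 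This is correct and proves a stronger statement, but it is more than the local context demands.
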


It remains to consider the following two cases:\\
{\bf Case 1.} $K_{F_0}^2=5$, $P_3(X)=\dim W_3^0=4$, and $\dim W_3^+ =4$. \\
{\bf Case 2.} $K_{F_0}^2=9$, $P_3(X)=\dim W_3^0=4$, and $\dim W_3^+ =7$.





\begin{prop}
In the Case 1 that $K_{F_0}^2=5$, $|5K_X|$ induces a birational map.
\end{prop}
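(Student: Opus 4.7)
The plan is to apply the method of Propositions~\ref{=1} and~\ref{=3}, using the unique divisor $T=T_P\in|2K_X+P-E_2|$ (which exists because $P_2(X)=1$) as the auxiliary surface in place of a section of $|L_{3K}|$. By Lemma~\ref{modification} we may and do assume that $T$ is smooth and nef, and since $K_{F_0}^2=5$ makes $|2K_{F_0}|$ base-point-free, the restriction $T|_F=2\sigma_F^*K_{F_0}$ to a general fibre $F$ of $a_X$ yields the basic numerical data $T^2\cdot F=4K_{F_0}^2=20$ and $g(\Gamma)=1+3K_{F_0}^2=16$, where $\Gamma$ is the general fibre of $a|_T\colon T\to E$.

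The short exact sequence already displayed immediately before the proposition, together with Kawamata--Viehweg vanishing applied to the kernel, produces the surjection
$$H^0\bigl(X,\,K_X+\lceil 2\pi^*K_{X_0}+2K_X+P-E_2\rceil\bigr)\twoheadrightarrow H^0\bigl(T,\,K_T+\lceil 2\pi^*K_{X_0}\rceil_T\bigr).$$
Since the source is a sublinear system of $|5K_X+P|$, the birationality of $|5K_X+P|$ reduces to two tasks: first, that $|K_T+\lceil D\rceil|$ defines a birational map on $T$, where $D:=2\pi^*K_{X_0}|_T$; and second, that $|5K_X+P|$ separates general pairs of points lying off $T$ or on a common fibre of $a_X$.

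For the first task I would apply Theorem~\ref{adjoint-bundle}. The inequality $D^2>8$ should follow from the bound $T^3\ge 10$ established earlier in the section, together with intersection computations expanding $T\sim 2K_X+P-E_2$ and using that $(\pi^*K_{X_0})^2$ is orthogonal to $\pi$-exceptional prime divisors. For the intersection bound $D\cdot C\ge 3$ on curves $C\subset T$ through two general points, the case $C=\Gamma$ is immediate since $D\cdot\Gamma=(2\sigma_F^*K_{F_0})^2=20$; for $C$ dominating $E$ the Hodge Index Theorem on $T$ gives $D\cdot C\ge\sqrt{D^2\cdot C^2}\ge\sqrt{10\,C^2}$ once $C^2>0$, and the subcases $C^2\in\{0,1\}$ are handled by a case analysis modelled on Step~4 of Proposition~\ref{=1}, using the non-hyperellipticity of $\Gamma$ (via Lemma~\ref{non-hyperelliptic}) to exclude low-degree pencils of curves on $T$.

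For the second task, separation of two general points on a common general fibre $F$ of $a_X$ follows from the argument in the last paragraph of Proposition~\ref{>1}: the multiplication $|2K_X+P|\otimes|3K_X-P|\to|5K_X|$ restricted to $F$ produces a sublinear system of $|5K_F|$ which is birational onto its image because $|5K_{F_0}|$ is birational (Theorem~\ref{surface}). Separation of a general $x_1\in T$ from a general $x_2\notin T$ comes from the kernel sections $|K_X+\lceil 2\pi^*K_{X_0}\rceil|$, which vanish identically on $T$ and form a large enough system on $X$ (by Kawamata--Viehweg vanishing and the volume $\vol(X)\ge\tfrac{5}{4}$) to be non-zero at a general $x_2$; varying $P\in\mathrm{Pic}^0(E)$ then covers all generic pairs. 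The main obstacle I expect is the intersection bound $D\cdot C\ge 3$ for curves $C$ on $T$ dominating $E$ with small self-intersection: the positivity of $\pi^*K_{X_0}|_T$ along such curves is subtle, and the argument must exploit the full fibration structure of $T\to E$ together with Hodge Index to rule out pathological configurations.
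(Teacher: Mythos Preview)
Your first task---birationality of $|K_T+\lceil 2\pi^*K_{X_0}\rceil_T|$ on $T$---is the paper's Step~1, and your outline for it is sound. The serious gap is in your second task. The same-fibre argument you borrow from Proposition~\ref{>1} does not apply: that argument rests on $|L_{3K}|_F\cong|3K_F|$, which holds there precisely because $\cD_P$ is a fibre and $\rk\cE_{3K}^+=1$. In Case~1 one has $\dim W_3^+=4$, so $|L_{3K}|_F$ is only a $3$-dimensional subsystem of the $15$-dimensional $|3K_F|$, and the product $|2K_X+P|\cdot|3K_X-P|$ restricted to $F$ is a single $3$-dimensional pencil inside $|5K_F|$---nowhere near birational.

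More importantly, your off-$T$ separation misses the decisive case. Having chosen $P$ with $x_1\in T_P$, the sections you can produce vanishing on $T_P$ lie in $T_P+|3K_X-P|+E_2=T_P+\cD_{-P}+|L_{3K}|+E_2+E_3$; every such divisor passes through $x_2$ whenever $x_2\in\cD_{-P}$. Your kernel sections $|K_X+\lceil 2\pi^*K_{X_0}\rceil|$ do not depend on $P$, so ``varying $P$'' does not enlarge them, and in any event they sit inside $|3K_X|=|L_{3K}|+\cD_0+E_3$ and hence suffer the same obstruction. Since $P=P(x_1)$ is determined by $x_1$, the locus $\cD_{-P(x_1)}$ moves with $x_1$, and nothing you have written rules out that the putative second sheet of $\varphi_{5K}$ over $\varphi(x_1)$ lands exactly there. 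The paper spends Steps~2--8 on this: one shows $q(T)=q(\cD_P)=1$, that $\Lambda_3,\Delta$ are big on $F_0$, and by a fibrewise genus comparison that any dominant rational map $\cD_{-P}\dashrightarrow T_P$ is birational; this forces $\deg\varphi_{5K}\le 2$, and the degree-$2$ case is eliminated by showing that a covering involution $\sigma$ with $\sigma_*T_P=\cD_{-P}$ would, after comparing the decompositions of $|2K_X+P|$ and $|3K_X-P|$ on a fibre, force an effective divisor in $|K_F|$ on the $p_g(F)=0$ surface. This structural analysis of $\cD_P$ versus $T_P$ is the heart of the argument and is absent from your plan.
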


\begin{proof} This is a involved case that we need to work harder.\\

\noindent {\bf Step 1.}
   The  system $|5K_X|_{T}$ induces a  birational map of $T$.

  To see this, it suffices to apply Theorem \ref{adjoint-bundle} to show that $|K_{T}+\lceil 2\pi^*K_{X_0}\rceil_{T}|$ induces a birational map of $T$. We note that
  $2\pi^*K_{X_0}\succeq T_P$ for $P\in \Pic^0(E)_{\mathrm{tor}}$. Hence $\lceil 2\pi^*K_{X_0}\rceil_{T}\succeq T_{P,T}$.

 We then consider the eventual  map of $X$ associated to $\mathcal{O}_X(T)$ defined in \cite{J}, which is indeed the relative evaluation morphism of $\mathcal V$:
  $$\varphi: X\rightarrow \mathbb P_E(\mathcal V).$$ We note that $\varphi$ is generically finite, whose restriction  to  a general fiber $F$ of $a_X$ is
  exactly the bicanonical morphism of $F$. Note that $\varphi^*\mathcal O_{\mathbb P_E(\mathcal V)}(1)=\mathcal O_X(T)$. Let $C $ be a general curve on
  $T\subset X$. If $C$ is supported in a fiber $F$ of $a_X$, we see that $C\cdot T=2(C\cdot \sigma^*K_{F_0})_F\geq 4$.

  Suppose that $C\rightarrow E$ is dominant and $C^2>0$, then Hodge Index Theorem shows that $C \cdot T \ge 4$. Suppose that $C \rightarrow E$ is dominant and $C^2=0$, then  Lemma \ref{c2=0} asserts that  $\deg(C\rightarrow E)\geq 3$. Hence $(C\cdot T)_X\geq 3$ in any situation. By Theorem \ref{adjoint-bundle}, $|5K_X|_T \ge |K_T+T_{T}|$ is birational.\\

\noindent {\bf Step 2.} We have $h^1(\mathcal O_T)=h^1(\mathcal O_X)=1$ and the restriction of $a_X$ on $T$ is $a_T$, the Albanese map of $T$.

Since $T$ is a big and nef divisor on $X$, we have $h^1(\mathcal O_T)=h^1(\mathcal O_X)=1$. Moreover,
since $$ T_{P,F}+E_{2,F} = (2K_X+P)|_F = 2K_F \succeq \sigma^* 2K_{F_0},$$ and $E_{2,F}$ is $\sigma$-exceptional for general $F$.   One has
$T_{P,F}=\sigma^* 2K_{F_0}$. This implies in particular that a general fiber $C_t$ of $T\rightarrow E$ over $t \in E$ is a
connected smooth member of $|2\sigma^*K_{F_0}|$ on $F$. Hence the restriction of $a_X$ on $T$ is the Albanese morphism of $T$ and the general fiber is a curve with
$g(C_t)=1+3K_{F_0}^2$.\\

\noindent {\bf Step 3}
Also, we  have $h^1(\mathcal O_{\cD_P})=1$. In particular, the Stein factorization of  the restriction of $a_X|_{\cD_P}$ on $\cD_P$ is  the Albanese
map of $a_{\cD_P}$.

Note that $H^2(X, \mathcal O_X(-\cD_P))$ is dual to $H^1(X, K_X+\cD_P)$. Since the Iitaka model of $(X, \cD_P)$ dominates $E$, we have $$h^1(X,
a_{X*}\mathcal O_X(K_X+\cD_P))=0$$ by Koll\'ar's vanishing theorem. Moreover, since $|\Delta|$ is nef and big,  a general member of $|\Delta|$ is connected (and hence so is $|\cD_{P,F}|$). It follows that $h^1(F, K_F+\cD_{P, F})=0$. Since
$\cD_P$ is movable and nef, $R^1a_{X*}\mathcal O_X(K_X+\cD_P)=0$ by the Koll\'ar's torsion-freeness of $R^1a_{X*}\mathcal O_X(K_X+\cD_P)$. We then conclude that $1=h^1(\mathcal O_X)=h^1(\mathcal O_{\cD_P})$.\\

\noindent {\bf Step 4.} $\Delta$ and $\Lambda_3$  are big and nef. 

Suppose on the contrary that $\Delta$ defines a pencil, then $|\Delta|$ defines a morphism $f:
F_0\rightarrow \mathbb P^1$ and $\Delta=f^*\mathcal O_{\mathbb P^1}(a)$ for some $a\geq 3$. Let $H=f^*\mathcal O_{\mathbb P^1}(1)$. From $3K_{F_0}=\Lambda_3+aH$,  we
have
$$ \Lambda_3 \cdot H = 3 K_{F_0} \cdot H= 6(g(H)-1),$$ and
$$45=(3K_{F_0})^2= \Lambda_3^2+12a(g(H)-1).$$    By\cite[Th\'eor\`eme 2.2]{X}, $g(H)\geq 3$,  hence we get a contradiction. Therefore, $\Delta$ defines a map with $2$-dimensional image. In particular, $\Delta$ is big. The same argument holds for $\Lambda_3$ as well.\\


\noindent {\bf Step 5.} Any dominant maps from $\cD_P$ to $T$ is birational.

Assume  that there exists a dominant map $\varphi: \cD_P\rightarrow T$. Since $h^1(\mathcal O_T)=h^1(\mathcal O_{\cD_P})$, the Albanese fiber
of $a_{\cD_P}$, denoted $C'_t$ should also dominates the Albanese fiber $C_t$ of $a_T$.

Since $\Lambda_3$ is a big divisor on $F_0$. We claim that $\Lambda_3^2 \ge 4$. To see this, we consider $\varphi_{\Lambda_3}: F_0 \to Z \subset \mathbb{P}^3$.
The $$\Delta^2 \ge \deg(\varphi_{\Lambda_3}) \deg(Z).$$
If $\deg(Z)=2,3$, then $Z$ is not of general type and hence $\deg(\varphi_{\Lambda_3}) \ge 2$. Therefore, $\Lambda_3^2 \ge 4$.

By Hodge Index Theorem, we have  that $K_{F_0}\cdot \Lambda_3 \geq 5$. Recall that
$$15=3K_{F_0}^2=K_{F_0}\cdot \Lambda_3 +K_{F_0}\cdot \Delta.$$
Thus $K_{F_0}\cdot \Delta \leq 10$. Then $\Delta^2\leq 20$ and equality holds only when $\Delta \equiv 2K_{F_0}$ by Hodge Index Theorem again.
Hence $$2g(C'_t)-2 = K_{F_0}\cdot \Delta + \Delta^2 \le 30.$$

Suppose that $g(C'_t) <16$, then we reach a contradiction since $C'_t$ dominates $C_t$ and $g(C_t)=16$. Therefore, $g(C'_t)=16$ and $\Delta \equiv 2K_{F_0}$. Moreover, $\cD_P \to T$ is birational. \\

\noindent
{\bf Step 6.}
The linear system $|5K_X|$ induces a generically finite map $\varphi$.

After birational modifications, consider $\varphi: X\rightarrow Z\subset \mathbb P^{N}$ be the morphism induced by $|5K_X|$.
 Assume that $\dim Z\leq 2$. Note that $|5K_X|_T$ induces a birational map from $T$ to its image. Hence $Z$ is birational to $T$ and in particular $q(Z)>0$. Thus $q(Z)=1$.
Thus restrict $\varphi$ to a general fiber $F$ of $a_X$, we see that the image of $F$ is a general fiber of the Albanese morphism of $Z$ and hence is a curve of genus $g\geq 2$, which is a contradiction to $p_g(F)=q(F)=0$.\\

\noindent
{\bf Step 7.}
The map $\varphi$ induced by  $|5K_X|$ has degree $\le 2$.

Let $x \ne y$ be two general points
of $X$. Take $P\in \Pic^0(X)$ such that $x\in T_P$.  If $y\in T_P$, then by Step 1, $|5K_X|$ separates $x$ and $y$.

If $y\notin T_P$, we note
that $T_P+\cD_{-P}+|L_3|$ is a sub-linear system of $|5K_X|$. Hence if $y\notin \cD_{-P}$, then $x$ and $y$ can be separated by $|5K_X|$.

Suppose now that $y\notin T_P$, $y\in \cD_{-P}$.
As $\varphi |_{T_P}$ is birational, $x$ is general and $\varphi(y)=\varphi(x)\in \varphi(T_P)$, we have $\dim(\varphi(\cD_{-P})\cap \varphi(T_P))=2$.
Since $\cD_{-P}$ is irreducible, it follows that $\cD_{-P}$ is an irreducible component of $\varphi^{-1}(\varphi(T_P))$ and there exists a dominant map $\cD_{-P}\dashrightarrow T_P$. By Step 5, one sees that $\deg(\varphi) \le 2$. \\

\noindent
{\bf Step 8.} The map $\varphi$ induced by  $|5K_X|$ is birational.

Suppose that $\varphi$ has degree $2$, then there exists a birational involution $\sigma$ acting on $X$ such that $\varphi$ is birational to the quotient map $X\dashrightarrow X/\sigma$. Take a birational model $\rho: \tilde{X}\rightarrow X$ such that we can lift the action $\sigma$ to a biregular involution $\tilde{\sigma}$ on $\tilde{X}$. Then $\tilde{\sigma}_*\tilde{T}_P=\tilde{\cD}_{-P}$, where $\tilde{T}_P$ and
$\tilde{D}_{-P}$ are respectively the proper transform of $T_P$ and $\cD_{-P}$. On the other hand,  we always have
$$|2K_{\tilde{X}}+P|=\tilde{T}_P+E_2',$$ and $$|3K_X-P|=|\rho^*L_{3K}|+\tilde{\cD}_{-P}+E_3',$$ where $E_2'$ and $E_3'$ are always exceptional divisors from $\tilde{X}$ to its relative minimal model over $E$. Apply $\tilde{\sigma}_*$ to the first equality, we have $2K_{\tilde{X}}+P'=\tilde{\cD}_{-P}+\tilde{\sigma}_*(E_2')$, where $P'=\tilde{\sigma}_*(P)\in \Pic^0(X)$. Hence $K_{\tilde{X}}-P-P'=\rho^*L_{3K}+E_3'-\tilde{\sigma}_*(E_2')$. Restricted to a general fiber $\tilde{F}$ of $a_{\tilde{X}}$, we see that $$K_{\tilde{F}}+\tilde{\sigma}_*(E_2')_{\tilde{F}}=(\rho^*L_{3K})_F+E_{3, \tilde{F}}'.$$ Since $\tilde{\sigma}_*(E_2')_{\tilde{F}}$ and $E_{3, \tilde{F}}'$ are effective exceptional divisors from $\tilde{F}$ to its minimal model $F_0$.
  It follows that $h^0(\tilde{F}, K_{\tilde{F}}+\tilde{\sigma}_*(E_2')_{\tilde{F}}) = h^0(F_0, K_{F_0})=0$ but the right hand side is effective, which is a contradiction.
\end{proof}

\begin{prop}
In the Case 2 that $K_{F_0}^2=9$, $|5K_X|$ induces a birational map.
\end{prop}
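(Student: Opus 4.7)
The proof will follow the blueprint of the preceding Case~1 ($K_{F_0}^2 = 5$), adapting the numerical estimates to $K_{F_0}^2 = 9$. The overall plan is to establish birationality of $|5K_X|_T$, then use the inclusion $5K_X \succeq T_P + \cD_{-P} + L_{3K}$ (up to torsion) to separate arbitrary pairs of general points, reducing the hard case to showing that any dominant rational map $\cD_{-P} \dashrightarrow T_P$ is birational, and finally conclude $\deg\varphi_{5,X}\le 2$ and rule out the degree-$2$ case by the involution argument.

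First I would apply Theorem~\ref{adjoint-bundle} to $K_T + \lceil 2\pi^*K_{X_0} \rceil_T \succeq K_T + T|_T$. With $K_{F_0}^2 = 9$ we have $T_F = 2\sigma_F^*K_{F_0}$, hence $T_F^2 = 36$, and Proposition~\ref{J} gives $T^3 \ge 3\cdot 36/10 > 10$, so $(T|_T)^2 > 10$. For a general curve $C\subset T$, the required inequality $C\cdot T\ge 3$ should be checked case by case as in Case~1, using the eventual map $\varphi_T\colon X\to\mathbf P_E(\mathcal V)$ from \cite{J} (whose restriction to a general fiber $F$ is the bicanonical morphism postcomposed with $\sigma_F$), together with an analog of Lemma~\ref{c2=0} for $T$. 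This analog is justified by $q(T)=1$ and the non-hyperellipticity of the general fiber of $T\to E$, which is a smooth member of $|2\sigma_F^*K_{F_0}|$ of genus $1+3K_{F_0}^2=28$. In parallel I would record the irregularity statements $q(T)=q(\cD_P)=1$, with $a_X|_T$ and the Stein factorization of $a_X|_{\cD_P}$ being the Albanese maps, by the Kawamata--Viehweg and Koll\'ar vanishing arguments used in Steps~2--3 of Case~1.

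Next I would prove that $\Lambda_3$ and $\Delta$ are big and nef on $F_0$, ruling out pencils via the Xiao inequality $K^2\le 2$ for surfaces with $p_g=0$ carrying a pencil of genus-$2$ curves, exactly as in Step~4 of Case~1 and Claim~1 of the proof of Proposition~\ref{=3}. Then I would show that any dominant map $\cD_P\dashrightarrow T$ is birational, by comparing Albanese fiber genera: the one on $T$ has genus $28$, while the one on $\cD_P$ has genus $1+\tfrac{1}{2}(K_{F_0}\cdot\Delta+\Delta^2)$. Combining $\Lambda_3+\Delta\equiv 3K_{F_0}$, Hodge Index on $F_0$, and $\Lambda_3^2\ge 4$ (from $\dim W_3^0 = 4$ and the non-degeneracy argument used in Case~1), one gets $K_{F_0}\cdot\Delta\le 21$ and $\Delta^2\le 49$, hence $g(C'_t)\le 36$; this naive bound is too loose, and must be refined to force $\Delta\equiv 2K_{F_0}$.

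The final step then transfers verbatim from Case~1: $\varphi_{5,X}$ is generically finite of degree $\le 2$, and an involution $\tilde\sigma$ on a suitable model $\tilde X$ exchanging $\tilde T_P$ and $\tilde{\cD}_{-P}$, combined with $|2K_X + P| = T_P + E_2$ and $|3K_X - P| = |L_{3K}| + \cD_{-P} + E_3$, yields after restriction to a general Albanese fiber the equivalence $K_{\tilde F} + (\tilde\sigma_*E_2')|_{\tilde F} \sim (\rho^*L_{3K})|_{\tilde F} + E'_{3,\tilde F}$, contradicting $p_g(F_0) = 0$. The main obstacle is the genus comparison in the previous paragraph: with $K_{F_0}^2 = 9$ the crude Hodge-index bound only gives $g(C'_t)\le 36$ against the target $g(C_t)=28$, so a refinement exploiting the extra positivity encoded in $h^0(\Delta)\ge\dim W_3^+ = 7$, or the very restrictive geometry of surfaces with $K^2=9$, $p_g=0$ (which are related to fake projective planes and are quite rigid), will be needed to force $\Delta\equiv 2K_{F_0}$.
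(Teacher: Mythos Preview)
Your outline matches the paper's proof step for step: Steps 1--3 and 6--8 carry over verbatim from Case~1, and the only issue is the genus comparison in Step~5. The paper resolves it by precisely the rigidity you allude to in your last sentence: for a minimal surface with $p_g=q=0$ and $K^2=9$, Noether's formula gives $c_2=3$, hence $b_2=1$ and $\mathrm{NS}(F_0)\cong\mathbb Z$. Writing $K_{F_0}\equiv kH$ with $H$ the ample generator, so $(k,H^2)\in\{(1,9),(3,1)\}$, both $\Lambda_3$ and $\Delta$ are positive integer multiples of $H$ and hence ample---this replaces your Xiao-based Step~4 entirely---and a finite check of the possibilities for $\Delta=b_2H$ forces $\Delta\equiv 2K_{F_0}$ and birationality of $\cD_P\to T$.

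In fact your naive bound already suffices, so the obstacle you flag is illusory. Once $\Delta$ is big (and your Xiao argument does give this: if $\Delta=aG$ is a pencil then $h^0(\Delta)\ge 7$ forces $a\ge 6$, whence $\Lambda_3^2=81-12a(g(G)-1)\ge 0$ forces $g(G)=2$, excluded by Xiao), the Albanese fibre of $\cD_P$ over $E$ is the connected curve $C'_t\in|\Delta|$. A dominant map $C'_t\to C_t$ of degree $d\ge 2$ would give, by Riemann--Hurwitz, $2g(C'_t)-2\ge 2\cdot 54=108$, i.e.\ $g(C'_t)\ge 55$, contradicting $g(C'_t)\le 36$. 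Hence $d=1$ and $\cD_P\to T$ is birational; Steps~7--8 use only this birationality, not the stronger statement $\Delta\equiv 2K_{F_0}$. (Your Xiao argument for $\Lambda_3$ does not quite close---a genus-$3$ pencil is not excluded when $K^2=9$---but this is harmless: if $\Lambda_3$ were a pencil one would get an even smaller upper bound on $g(C'_t)$.)
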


\begin{proof}
We basically use the same approach as in Case 1 with modification on some of the steps. First of all, one can easily sees that Steps 1-3 holds.\\

\noindent
{\bf Step 4$'$.} Both $\Lambda_3$ and $\Delta$ are ample.

When $K_{F_0}^2=9$, we know that the N\'eron-Severi group $\mathrm{NS}(F_0)\simeq \mathbb Z$. Let $H$ be an ample line bundle on $F_0$ such that its class
$[H]$ is a generator of $\mathrm{NS}(F_0)$. We know that $h^0(F_0, \Lambda_3)\geq 4$ and $h^0(F_0, \Delta )\geq 4$. Hence both
$\Lambda_3$ and $\Delta$ are ample.    \\

\noindent
{\bf Step 5$'$.}  Any dominant maps from $\cD_P$ to $T$ is birational.

Let $K_{F_0}=kH$, $\Lambda_3=b_1 H, \Delta=b_2H$. It is clear that the only possibilities for $(k,H^2)$ are $(1, 9)$, $(3,1)$.
A simple computation as in Step 5 shows that if   $\cD_P$ dominates $T$, then it is birational and $\Delta \equiv 6H\equiv 2K_{F_0}$, $\Lambda_3 \equiv 3H\equiv K_{F_0}$.

Then the Step 6-8 holds verbatim.

This concludes the proof of the Case 2, and hence the proof of our main theorem that $|5K_X|$ is birational for any irregular threefold of general type.
\end{proof}

\end{document}